\renewcommand\eqref[1]{(\ref{#1})} 
\allowdisplaybreaks \numberwithin{equation}{section}
\theoremstyle{plain}
\newtheorem{theorem}{Theorem}[section]
\newtheorem{prop}[theorem]{Proposition}
\newtheorem{corollary}[theorem]{Corollary}
\newtheorem{lemma}[theorem]{Lemma}
\newtheorem{assump}[theorem]{Assumption}
\theoremstyle{definition}
\newtheorem{defn}[theorem]{Definition}
\newtheorem{rem}[theorem]{Remark}
\def\ind{{\mathcal I}}
\newcommand{\omp}{(\overline{\Omega})}
\newcommand{\efel}{\mathcal{F}_L}
\newcommand{\efela}{\mathcal{F}_{L^*}}
\newcounter{quotecount}
\begin{document}

\title[Nonharmonic analysis without WZ condition]
{Nonharmonic analysis of boundary value problems without WZ condition}

\author[Michael Ruzhansky]{Michael Ruzhansky}
\address{
  Michael Ruzhansky:
  \endgraf
  Department of Mathematics
  \endgraf
  Imperial College London
  \endgraf
  180 Queen's Gate, London, SW7 2AZ
  \endgraf
  United Kingdom
  \endgraf
  {\it E-mail address} {\rm m.ruzhansky@imperial.ac.uk}
  }
\author[Niyaz Tokmagambetov]{Niyaz Tokmagambetov}
\address{
  Niyaz Tokmagambetov:
  \endgraf
    al--Farabi Kazakh National University
  \endgraf
  71 al--Farabi ave., Almaty, 050040
  \endgraf
  Kazakhstan,
  \endgraf
   and
  \endgraf
    Department of Mathematics
  \endgraf
  Imperial College London
  \endgraf
  180 Queen's Gate, London, SW7 2AZ
  \endgraf
  United Kingdom
  \endgraf
  {\it E-mail address} {\rm n.tokmagambetov@imperial.ac.uk}
 }

\date{\today}

\subjclass{Primary 58J40; Secondary 35S05, 35S30, 42B05}
\keywords{Pseudo-differential operators, boundary value problems,
Fourier series, non-local boundary condition, nonharmonic
analysis}
\thanks{The first author was supported in parts by the EPSRC
 grant EP/K039407/1 and by the Leverhulme Grant RPG-2014-02. The second author was supported in parts by the MESRK grant 0773/GF4. No new data was collected or generated during the course of this research.}

\maketitle

\begin{abstract}
In this work we continue our research on nonharmonic analysis of
boundary value problems as initiated in \cite{Ruzhansky-Tokmagambetov:IMRN}.
There, we assumed that the eigenfunctions of the model operator on which the construction is based do not have zeros. In this paper we have weakened this condition extending the applicability of the developed pseudo-differential analysis. Also, we do not assume that the underlying set $\Omega$ is bounded.
\end{abstract}

\section{Introduction}

In \cite{Ruzhansky-Tokmagambetov:IMRN} the authors developed
pseudo-differential calculus in terms of the
`model' densely defined operator L. The main examples are operators in
$\Omega\subset\mathbb R^n$ equipped with (arbitrary) boundary conditions on $\partial\Omega$ for which the global Fourier analysis in terms of its
eigenfunctions was introduced. Such a `model' operator L does
not have to be self-adjoint, so the construction is based on
biorthogonal systems rather than on an orthonormal basis (to take
into account the non-self-adjointness). Also, the operator L
does not have to be elliptic. The `model' operator ${\rm L}$ was considered as a differential operator of
order $m$ with smooth coefficients on an open  bounded set
$\Omega\subset\mathbb R^n$ equipped with some boundary
conditions which one can denote as (BC). In \cite{Ruzhansky-Tokmagambetov:IMRN} one worked with discrete sets of eigenvalues and eigenfunctions indexed by
a countable set, and one developed elements of the symbolic calculus assuming that the system of eigenfunctions is the without zeros in $\Omega$ (so called WZ-system).
We refer to \cite{Ruzhansky-Tokmagambetov:IMRN} for examples and an extensive list of references in this subject.

\smallskip

In this paper we will drop some conditions of  the `model'
operator ${\rm L}$. Let us consider the case when ${\rm L}$ is an
arbitrary operator in $\Omega\subseteq\mathbb R^n$ with the
discrete spectrum and the system of eigenfunctions which is a
Riesz basis in $L^{2}(\Omega)$.

Denote the corresponding countable index set by $\ind$. However, in different problems it may be more convenient to make different
choices for this set, e.g. $\ind=\mathbb N$ or $\mathbb Z$  or $\mathbb Z^k$, etc.
In order to allow different applications we will be denoting it by $\ind$,
and without loss of generality we
will assume that
\begin{equation}\label{EQ:ind}
\ind \textrm{ is a subset of } \mathbb Z^{K} \textrm{ for some } K\geq 1.
\end{equation}
For simplicity, one can think of $\ind=\mathbb Z$ or $\ind=\mathbb N\cup\{0\}$
throughout this paper.
Thus, in this paper we will be always working in the following setting:

\begin{assump}
\label{Assumption_1} Let $\Omega\subseteq\mathbb R^{n}$, $n\geq
1$, be an open set.
Assume that ${\rm L}$ is a densely defined operator with a discrete spectrum
$\{\lambda_{\xi}\in\mathbb C: \, \xi\in\ind\}$ on $L^{2}(\Omega)$, and the system of corresponding eigenfunctions
$\{u_{\xi}: \; \xi\in\ind\}$ is a Riesz basis in $L^{2}(\Omega)$ (i.e.
for every $f\in L^{2}(\Omega)$ there exists a unique series
$\sum_{\xi\in\ind} a_\xi u_\xi(x)$ that converges to $f$ in  $L^{2}(\Omega)$),
where $\ind$ is a countable set as in \eqref{EQ:ind}, and we order
the eigenvalues with the occurring multiplicities in the ascending
order:
\begin{equation}\label{EQ: EVOrder}
 |\lambda_{j}|\leq|\lambda_{k}| \quad\textrm{ for } |j|\leq |k|.
\end{equation}
\end{assump}


We denote by $u_{\xi}$  the eigenfunction of ${\rm L}$
corresponding to the eigenvalue $\lambda_{\xi}$ for each
$\xi\in\ind$, so that
\begin{equation}
\label{SpecPr} {\rm L}u_{\xi}=\lambda_{\xi}u_{\xi} \,\,\,\,\,\,
\textrm{ in }  \Omega,\quad \textrm{ for all } \xi\in\ind.
\end{equation}
The conjugate spectral problem is
\begin{equation}
\label{ConjSpecPr} {\rm
L^{\ast}}v_{\xi}=\overline{\lambda}_{\xi}v_{\xi}\,\,\,\,\,\,
\textrm{ in } \Omega\quad \textrm{ for all } \xi\in\ind.
\end{equation}

Let $\|u_{\xi}\|_{L^{2}}=1$ and $\|v_{\xi}\|_{L^{2}}=1$ for all
$\xi\in\ind.$ Here, we can take biorthogonal systems
$\{u_{\xi}\}_{\xi\in\ind}$ and $\{v_{\xi}\}_{\xi\in\ind}$,
i.e.
\begin{equation}\label{BiorthProp}
(u_{\xi},v_{\eta})_{L^2}=0 \,\,\,\, \hbox{for}
\,\,\,\, \xi\neq\eta, \,\,\,\, \hbox{and} \,\,\,\,
(u_{\xi},v_{\eta})_{L^2}=1 \,\,\,\, \hbox{for} \,\,\,\, \xi=\eta,
\end{equation}
where
$$(f, g)_{L^{2}}:=\int_{\Omega}f(x)\overline{g(x)}dx$$ is the usual
inner product of the
Hilbert space $L^{2}(\Omega)$. From N.K. Bari's work
\cite{bari} it follows that the system $\{u_{\xi}: \,\,\,
\xi\in\ind\}$ is a basis in $L^{2}(\Omega)$ if and only if the
system $\{v_{\xi}: \,\,\, \xi\in\ind\}$ is a basis in
$L^{2}(\Omega)$. Therefore, by Bari \cite{bari}, the system $\{v_{\xi}: \,\,\, \xi\in\ind\}$ is also
a basis in $L^{2}(\Omega)$.
Also, Assumption \ref{Assumption_1}  will imply that the spaces
$C^\infty_{{\rm L}}(\overline\Omega)$ and $C^\infty_{{\rm L}^*}(\overline\Omega)$
of test functions introduced in Subsection \ref{SEC:TD}
are dense in $L^2(\Omega)$.


\medskip
Define the weight
\begin{equation}\label{EQ:angle}
\langle\xi\rangle:=(1+|\lambda_{\xi}|^2)^{\frac{1}{2m}},
\end{equation}
which will be instrumental in measuring the growth/decay of Fourier coefficients and of
symbols. Here $m>0$ is an arbitrary number that we fix throughout the paper.
For simplicity we can take $m=1$. However, if L is, for example, a differential operator, it is convenient to take $m$ to be equal to its order.

To give the interpretation for $\langle\xi\rangle$ in terms of the operator analysis, we can define the operator
${\rm L}^{\circ}$ by setting its values on the basis $u_{\xi}$ by
\begin{equation}\label{EQ:Lo-def}
{\rm L}^{\circ} u_{\xi}:=\overline{\lambda_{\xi}} u_{\xi},\quad
\textrm{ for all } \xi\in\ind.
\end{equation}
If L is self-adjoint, we have ${\rm L}^{\circ}={\rm L}^*={\rm L}$.
Consequently, we can informally think of $\langle\xi\rangle$
as of the eigenvalues of the positive (first order) operator
$({\rm I}+{\rm L^\circ\, L})^{\frac{1}{2m}}.$

With a similar definition for $({\rm L}^{*})^{\circ}$, we can observe that
$({\rm L}^{*})^{\circ}=({\rm L}^{\circ})^{*}$.

Simplest examples of non-periodic boundary conditions were considered in  \cite{Kanguzhin_Tokmagambetov_Tulenov} and \cite{Kanguzhin_Tokmagambetov} in the case of
$\Omega=[0,1]$ being the segment. This extends to the non-periodic case the periodic analysis developed in \cite{RT07, Ruzhansjy-Turunen:NFA, RT,Ruzhansky-Turunen-JFAA-torus} on the torus which can be viewed as analysis on $\Omega=[0,1]$ with periodic boundary conditions.
We refer to \cite{Ruzhansky-Tokmagambetov:IMRN} for further examples.

\section{Preliminary}
\label{SEC:TD}

In this section we collect some results on ${\rm L}$--distributions,
${\rm L}$--Fourier transform, Plan\-che\-rel
formula and Sobolev spaces $\mathcal H^{s}_{{\rm L}}(\Omega)$, and
we omit the proofs because they are a straightforward extension of
those in \cite{Ruzhansky-Tokmagambetov:IMRN}.

\subsection{Global distributions generated by the boundary value
problem}

In this subsection we describe the spaces of distributions
generated by ${\rm L}$ and by its
adjoint ${\rm L}^*$ and the related global Fourier analysis. The
more far-reaching aim of this analysis is to establish a version
of the Schwartz kernel theorem for the appearing spaces of
distributions.
We first define the space $C_{{\rm
L}}^{\infty}(\overline{\Omega})$ of test functions.

\begin{defn}\label{TestFunSp}
The space $C_{{\rm L}}^{\infty}(\overline{\Omega}):={\rm Dom}({\rm
L}^{\infty})$ is called the space of test functions for ${\rm L}$.
Here we define
$$
{\rm Dom}({\rm L}^{\infty}):=\bigcap_{k=1}^{\infty}{\rm Dom}({\rm
L}^{k}),
$$
where ${\rm Dom}({\rm L}^{k})$ is the domain of the operator ${\rm
L}^{k}$, in turn defined as
$$
{\rm Dom}({\rm L}^{k}):=\{f\in L^{2}(\Omega): \,\,\, {\rm
L}^{j}f\in {\rm Dom}({\rm L}), \,\,\, j=0, \,1, \, 2, \ldots,
k-1\}.
$$
The Fr\'echet topology of $C_{{\rm
L}}^{\infty}(\overline{\Omega})$ is given by the family of norms
\begin{equation}\label{EQ:L-top}
\|\varphi\|_{C^{k}_{{\rm L}}}:=\max_{j\leq k}
\|{\rm L}^{j}\varphi\|_{L^2(\Omega)}, \quad k\in\mathbb N_0,
\; \varphi\in C_{{\rm L}}^{\infty}(\overline{\Omega}).
\end{equation}

Analogously to the ${\rm L}$-case, we introduce the space $C_{{\rm
L^{\ast}}}^{\infty}(\overline{\Omega})$ corresponding to the adjoint operator ${\rm L}_\Omega^*$ by
$$
C_{{\rm L^{\ast}}}^{\infty}(\overline{\Omega}):=
{\rm Dom}(({\rm L^{\ast}})^{\infty})=\bigcap_{k=1}^{\infty}{\rm
Dom}(({\rm L^{\ast}})^{k}),
$$
where ${\rm Dom}(({\rm L^{\ast}})^{k})$ is the domain of the
operator $({\rm L^{\ast}})^{k}$,
$$
{\rm Dom}(({\rm L^{\ast}})^{k}):=\{f\in L^{2}(\Omega): \,\,\, ({\rm
L^{\ast}})^{j}f\in {\rm Dom}({\rm L^{\ast}}), \,\,\, j=0, \ldots, k-1\},
$$
which satisfy the adjoint boundary conditions corresponding to the operator
${\rm L}_\Omega^*$. The Fr\'echet topology of $C_{{\rm L}^*}^{\infty}(\overline{\Omega})$ is given by the family of norms
\begin{equation}\label{EQ:L-top-adj}
\|\psi\|_{C^{k}_{{\rm L}^*}}:=\max_{j\leq k}
\|({\rm L}^*)^{j}\psi\|_{L^2(\Omega)}, \quad k\in\mathbb N_0,
\; \psi\in C_{{\rm L}^*}^{\infty}(\overline{\Omega}).
\end{equation}

Since we have $u_\xi\in C^\infty_{{\rm L}}(\overline\Omega)$ and
$v_\xi\in C^\infty_{{\rm L}^*}(\overline\Omega)$ for all $\xi\in\ind$, we observe that
Assumption \ref{Assumption_1} implies that the spaces
$C^\infty_{{\rm L}}(\overline\Omega)$ and $C^\infty_{{\rm L}^*}(\overline\Omega)$
are dense in $L^2(\Omega)$.
\end{defn}

We note that if ${\rm L}$ is self-adjoint, i.e. if ${\rm L}^*={\rm
L}$ with the equality of domains, then $C_{{\rm
L^{\ast}}}^{\infty}(\overline{\Omega})=C_{{\rm
L}}^{\infty}(\overline{\Omega}).$

In general, for functions $f\in C_{{\rm L}}^{\infty}(\overline{\Omega})$ and
$g\in C_{{\rm L}^*}^{\infty}(\overline{\Omega})$, the $L^2$-duality makes sense in view
of the formula
\begin{equation}\label{EQ:duality}
({\rm L}f, g)_{L^2(\Omega)}=(f,{\rm L}^*g)_{L^2(\Omega)}.
\end{equation}
Therefore, in view of the formula \eqref{EQ:duality},
it makes sense to define the distributions $\mathcal D'_{{\rm L}}(\Omega)$
as the space which is dual to $C_{{\rm L}^*}^{\infty}(\overline{\Omega})$.

\begin{defn}\label{DistrSp}
The space $$\mathcal D'_{{\rm
L}}(\Omega):=\mathcal L(C_{{\rm L}^*}^{\infty}(\overline{\Omega}),
\mathbb C)$$ of linear continuous functionals on
$C_{{\rm L}^*}^{\infty}(\overline{\Omega})$ is called the space of
${\rm L}$-distributions.
We can understand the continuity here either in terms of the topology
\eqref{EQ:L-top-adj} or in terms of sequences, see
Proposition \ref{TH: UniBdd}.
For
$w\in\mathcal D'_{{\rm L}}(\Omega)$ and $\varphi\in C_{{\rm L}^*}^{\infty}(\overline{\Omega})$,
we shall write
$$
w(\varphi)=\langle w, \varphi\rangle.
$$
For any $\psi\in C_{{\rm L}}^{\infty}(\overline{\Omega})$,
$$
C_{{\rm L}^*}^{\infty}(\overline{\Omega})\ni \varphi\mapsto\int_{\Omega}{\psi(x)} \, \varphi(x)\, dx
$$
is an ${\rm L}$-distribution, which gives an embedding $\psi\in
C_{{\rm L}}^{\infty}(\overline{\Omega})\hookrightarrow\mathcal D'_{{\rm
L}}(\Omega)$.
We note that in the distributional notation formula \eqref{EQ:duality} becomes
\begin{equation}\label{EQ:duality-dist}
\langle{\rm L}\psi, \varphi\rangle=\langle \psi,\overline{{\rm L}^* \overline{\varphi}}\rangle.
\end{equation}
\end{defn}

With the topology on $C_{{\rm L}}^{\infty}(\overline{\Omega})$
defined by \eqref{EQ:L-top},
the space $$\mathcal
D'_{{\rm L^{\ast}}}(\Omega):=\mathcal L(C_{{\rm L}}^{\infty}(\overline{\Omega}), \mathbb C)$$
of linear continuous functionals on $C_{{\rm L}}^{\infty}(\overline{\Omega})$
is called the
space of ${\rm L^{\ast}}$-distributions.

\begin{prop}\label{TH: UniBdd}
A linear functional $w$ on
$C_{{\rm L}^*}^{\infty}(\overline{\Omega})$ belongs to $\mathcal D'_{{\rm
L}}(\Omega)$ if and only if there exists a constant $c>0$ and a
number $k\in\mathbb N_0$ with the property
\begin{equation}
\label{EQ: UnifBdd-s1} |w(\varphi)|\leq
c \|\varphi\|_{C^{k}_{{\rm L}^*}} \quad \textrm{ for all } \; \varphi\in C_{{\rm
L}^*}^{\infty}(\overline{\Omega}).
\end{equation}
\end{prop}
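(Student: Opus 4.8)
The plan is to reduce this to the standard fact that a linear functional on a locally convex space topologized by an increasing sequence of seminorms is continuous precisely when it is dominated by one of those seminorms; the monotonicity $\|\cdot\|_{C^{j}_{{\rm L}^*}}\leq\|\cdot\|_{C^{k}_{{\rm L}^*}}$ for $j\leq k$, built into the definition \eqref{EQ:L-top-adj}, is exactly what makes the reduction work.

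The ``if'' direction is immediate: if $|w(\varphi)|\leq c\|\varphi\|_{C^{k}_{{\rm L}^*}}$ for all $\varphi$, then $w$ is continuous at $0$, hence (by linearity) everywhere; and since the topology \eqref{EQ:L-top-adj} is generated by countably many seminorms and is therefore metrizable, topological continuity coincides with sequential continuity, so $w\in\mathcal D'_{{\rm L}}(\Omega)$ in either reading of Definition \ref{DistrSp}.

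For the converse, suppose $w\in\mathcal D'_{{\rm L}}(\Omega)$. By continuity at the origin, the preimage $w^{-1}(\{|z|<1\})$ is a neighbourhood of $0$ in $C_{{\rm L}^*}^{\infty}(\overline{\Omega})$, hence it contains a basic neighbourhood of the form $\bigcap_{i=1}^{N}\{\varphi:\|\varphi\|_{C^{k_i}_{{\rm L}^*}}<\delta_i\}$; setting $k:=\max_i k_i$ and $\delta:=\min_i\delta_i$ and using the monotonicity of the seminorms we obtain $\{\varphi:\|\varphi\|_{C^{k}_{{\rm L}^*}}<\delta\}\subseteq w^{-1}(\{|z|<1\})$, that is, $|w(\varphi)|<1$ whenever $\|\varphi\|_{C^{k}_{{\rm L}^*}}<\delta$. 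A homogeneity argument then upgrades this to \eqref{EQ: UnifBdd-s1} with $c=2/\delta$: when $\|\varphi\|_{C^{k}_{{\rm L}^*}}>0$ apply the bound to $\tfrac{\delta}{2}\varphi/\|\varphi\|_{C^{k}_{{\rm L}^*}}$, and when $\|\varphi\|_{C^{k}_{{\rm L}^*}}=0$ apply it to $t\varphi$ for every $t>0$ to conclude $w(\varphi)=0$. There is no substantial obstacle here — the argument is the textbook one for Fréchet spaces; the only points that deserve a word are the reduction of a finite intersection of seminorm balls to a single ball (immediate from the $\max$ in \eqref{EQ:L-top-adj}) and the degenerate case $\|\varphi\|_{C^{k}_{{\rm L}^*}}=0$, which is handled by the scaling just described. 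As announced in the text, the details run exactly parallel to the corresponding statement in \cite{Ruzhansky-Tokmagambetov:IMRN}.
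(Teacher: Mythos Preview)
Your argument is correct and is precisely the standard characterisation of continuous linear functionals on a Fr\'echet space whose topology is given by an increasing sequence of seminorms; the paper itself omits the proof of this proposition, deferring to \cite{Ruzhansky-Tokmagambetov:IMRN}, and your write-up is exactly the routine verification one would expect there. One minor remark: since $\|\varphi\|_{C^{k}_{{\rm L}^*}}\geq\|\varphi\|_{L^2(\Omega)}$ (the $j=0$ term in \eqref{EQ:L-top-adj}), each seminorm is in fact a norm, so the ``degenerate case'' $\|\varphi\|_{C^{k}_{{\rm L}^*}}=0$ occurs only for $\varphi=0$ and is trivial---but your handling of it is fine regardless.
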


The space $\mathcal D'_{{\rm L}}(\Omega)$ has many similarities with the
usual spaces of distributions. For example, suppose that for a linear continuous operator
$D:C_{{\rm L}}^{\infty}(\overline{\Omega})\to C_{{\rm L}}^{\infty}(\overline{\Omega})$
its adjoint $D^*$
preserves the domain of ${\rm L}^*$
and is continuous on the space
$C_{{\rm L}^*}^{\infty}(\overline{\Omega})$, i.e.
that the operator
$D^*:C_{{\rm L}^*}^{\infty}(\overline{\Omega})\to C_{{\rm L}^*}^{\infty}(\overline{\Omega})$
is continuous.
Then we can extend $D$ to $\mathcal D'_{{\rm L}}(\Omega)$ by
$$
\langle Dw,{\varphi}\rangle := \langle w, \overline{D^* \overline{\varphi}}\rangle \quad
(w\in \mathcal D'_{{\rm L}}(\Omega),\;  \varphi\in C_{{\rm
L}^*}^{\infty}(\overline{\Omega})).
$$
This extends \eqref{EQ:duality-dist} from L to other operators.
The convergence in the linear space
$\mathcal D'_{{\rm L}}(\Omega)$ is the usual weak convergence with respect to
the space $C_{{\rm L}^*}^{\infty}(\overline{\Omega})$.
The following principle of uniform boundedness is based on the
Banach--Steinhaus Theorem applied to the Fr\'echet space $C_{{\rm
L}^*}^{\infty}(\overline{\Omega})$.

\begin{lemma} \label{LEM: UniformBoundedness}
Let $\{w_{j}\}_{j\in\mathbb N}$ be a sequence in $\mathcal
D'_{{\rm L}}(\Omega)$ with the property that for every $\varphi\in
C_{{\rm L}^*}^{\infty}(\overline{\Omega})$, the sequence
$\{w_{j}(\varphi)\}_{j\in\mathbb N}$ in $\mathbb C$ is bounded.
Then there exist constants $c>0$ and $k\in\mathbb N_0$ such that
\begin{equation}
\label{EQ: UniformBoundedness} |w_{j}(\varphi)|\leq c
\|\varphi\|_{C^{k}_{{\rm L}^*}} \quad \textrm{ for all } \; j\in\mathbb N, \,\,
\varphi\in C_{{\rm L}^*}^{\infty}(\overline{\Omega}).
\end{equation}
\end{lemma}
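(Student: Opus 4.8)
The statement is precisely the Banach--Steinhaus (uniform boundedness) principle for the family $\{w_j\}_{j\in\mathbb N}$ of continuous linear functionals on the Fr\'echet space $C_{{\rm L}^*}^{\infty}(\overline{\Omega})$, reformulated through the continuity criterion of Proposition \ref{TH: UniBdd}. The plan is to run the classical Baire category argument. First I would record two structural facts. One: $C_{{\rm L}^*}^{\infty}(\overline{\Omega})$ is a complete metrizable locally convex space --- metrizability is clear from the countable family of norms \eqref{EQ:L-top-adj}, and completeness holds because ${\rm L}^*$, being the adjoint of the densely defined operator ${\rm L}$, is closed, so each ${\rm Dom}(({\rm L}^*)^k)$ equipped with $\|\cdot\|_{C^{k}_{{\rm L}^*}}$ is a Banach space and $C_{{\rm L}^*}^{\infty}(\overline{\Omega})$ is their countable projective limit. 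Two: since $\|\varphi\|_{C^{k}_{{\rm L}^*}}$ is nondecreasing in $k$, the balls
\[
B_{k,\delta}:=\{\varphi\in C_{{\rm L}^*}^{\infty}(\overline{\Omega}):\ \|\varphi\|_{C^{k}_{{\rm L}^*}}<\delta\},\qquad k\in\mathbb N_0,\ \delta>0,
\]
already form a base of neighbourhoods of $0$, because any finite intersection of such sets contains one of them.

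Next I would set, for $N\in\mathbb N$,
\[
E_N:=\{\varphi\in C_{{\rm L}^*}^{\infty}(\overline{\Omega}):\ |w_j(\varphi)|\le N\ \text{ for all }j\in\mathbb N\},
\]
and observe that $E_N$ is closed (each $w_j$ being continuous), convex, and symmetric. The pointwise boundedness hypothesis says exactly that $C_{{\rm L}^*}^{\infty}(\overline{\Omega})=\bigcup_{N\in\mathbb N}E_N$, so, the space being a complete metric space, Baire's theorem provides an index $N_0$ with ${\rm int}\,E_{N_0}\neq\emptyset$; hence $\varphi_0+B_{k,\delta}\subseteq E_{N_0}$ for some $\varphi_0$, some $k$, and some $\delta>0$. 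For $\psi\in B_{k,\delta}$ one has $\varphi_0+\psi\in E_{N_0}$, and since $B_{k,\delta}$ and $E_{N_0}$ are symmetric also $\psi-\varphi_0=-(\varphi_0-\psi)\in E_{N_0}$; convexity then yields $\psi=\tfrac12(\varphi_0+\psi)+\tfrac12(\psi-\varphi_0)\in E_{N_0}$. Thus $B_{k,\delta}\subseteq E_{N_0}$, i.e. $|w_j(\psi)|\le N_0$ for all $j\in\mathbb N$ whenever $\|\psi\|_{C^{k}_{{\rm L}^*}}<\delta$.

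Finally I would rescale. Given $\varphi\in C_{{\rm L}^*}^{\infty}(\overline{\Omega})$, if $\|\varphi\|_{C^{k}_{{\rm L}^*}}=0$ then $\varphi=0$ (the norm dominates $\|\cdot\|_{L^2(\Omega)}$) and \eqref{EQ: UniformBoundedness} is trivial; otherwise apply the bound of the previous paragraph to $\tfrac{\delta}{2}\varphi/\|\varphi\|_{C^{k}_{{\rm L}^*}}$ and use the linearity of $w_j$ to get
\[
|w_j(\varphi)|\le\frac{2N_0}{\delta}\,\|\varphi\|_{C^{k}_{{\rm L}^*}}\qquad\text{for all }j\in\mathbb N,
\]
so that $c:=2N_0/\delta$ works. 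I expect the only genuinely substantive point to be the completeness of $C_{{\rm L}^*}^{\infty}(\overline{\Omega})$, which is what legitimises the use of Baire's theorem and which rests on ${\rm L}^*$ being a closed operator; everything else is routine convexity-and-homogeneity bookkeeping. Alternatively, one can bypass this by quoting the Banach--Steinhaus theorem for barrelled (in particular Fr\'echet) spaces to obtain equicontinuity of $\{w_j\}$ directly, and then translating equicontinuity into \eqref{EQ: UniformBoundedness} via Proposition \ref{TH: UniBdd}.
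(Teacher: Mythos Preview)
Your proof is correct and matches the paper's approach: the paper simply states that the lemma ``is based on the Banach--Steinhaus Theorem applied to the Fr\'echet space $C_{{\rm L}^*}^{\infty}(\overline{\Omega})$'' without further detail, and you have spelled out exactly that argument via Baire category, together with the justification (closedness of ${\rm L}^*$) for completeness of the test-function space.
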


The lemma above leads to the following property of completeness of
the space of ${\rm L}$-distributions.

\begin{theorem} \label{TH: Com-nessDistr}
Let $\{w_{j}\}_{j\in\mathbb N}$ be a
sequence in $\mathcal D'_{{\rm L}}(\Omega)$ with the property that
for every $\varphi\in C_{{\rm L}^*}^{\infty}(\overline{\Omega})$ the
sequence $\{w_{j}(\varphi)\}_{j\in\mathbb N}$ converges in
$\mathbb C$ as $j\rightarrow\infty$. Denote the limit by
$w(\varphi)$.

{\rm (i)} Then $w:\varphi\mapsto w(\varphi)$ defines an ${\rm
L}$-distribution on $\Omega$. Furthermore,
$$
\lim_{j\rightarrow\infty}w_{j}=w \,\,\,\,\,\,\,\, \hbox{in}
\,\,\,\,\,\,\, \mathcal D'_{{\rm L}}(\Omega).
$$

{\rm (ii)} If $\varphi_{j}\rightarrow\varphi$ in $\in C_{{\rm
L}^*}^{\infty}(\overline{\Omega})$, then
$$
\lim_{j\rightarrow\infty}w_{j}(\varphi_{j})=w(\varphi) \,\,\,\,\,\,\,\,
\hbox{in} \,\,\,\,\,\,\, \mathbb C.
$$
\end{theorem}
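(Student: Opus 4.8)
The plan is to follow the classical pattern for proving completeness of distribution spaces, which is a two-step argument: first establish that the pointwise limit $w$ is a genuine element of $\mathcal D'_{{\rm L}}(\Omega)$ using the uniform boundedness provided by Lemma \ref{LEM: UniformBoundedness}, and then upgrade pointwise convergence to the joint continuity statement in part (ii).

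\begin{proof}[Proof sketch]
\textbf{Part (i).} First I would observe that $w:\varphi\mapsto w(\varphi)$ is linear, since it is a pointwise limit of the linear functionals $w_j$. To see that $w$ is continuous, apply Lemma \ref{LEM: UniformBoundedness}: by hypothesis, for each fixed $\varphi\in C_{{\rm L}^*}^{\infty}(\overline{\Omega})$ the sequence $\{w_j(\varphi)\}_{j\in\mathbb N}$ converges in $\mathbb C$, hence is bounded. Therefore there exist $c>0$ and $k\in\mathbb N_0$, \emph{independent of $j$}, such that
\[
|w_{j}(\varphi)|\leq c\|\varphi\|_{C^{k}_{{\rm L}^*}}\quad\textrm{for all }j\in\mathbb N,\ \varphi\in C_{{\rm L}^*}^{\infty}(\overline{\Omega}).
\]
Passing to the limit $j\to\infty$ on the left-hand side gives $|w(\varphi)|\leq c\|\varphi\|_{C^{k}_{{\rm L}^*}}$, so by Proposition \ref{TH: UniBdd}, $w\in\mathcal D'_{{\rm L}}(\Omega)$. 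The convergence $w_j\to w$ in $\mathcal D'_{{\rm L}}(\Omega)$ is then immediate, since convergence in $\mathcal D'_{{\rm L}}(\Omega)$ is by definition the weak convergence tested against $C_{{\rm L}^*}^{\infty}(\overline{\Omega})$, which is exactly the hypothesis.

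\textbf{Part (ii).} Here I would use the uniform bound from Part (i) together with a splitting. Write
\[
w_{j}(\varphi_{j})-w(\varphi)=w_{j}(\varphi_{j}-\varphi)+\bigl(w_{j}(\varphi)-w(\varphi)\bigr).
\]
The second term tends to $0$ by Part (i) (or directly by hypothesis). For the first term, use the uniform estimate: $|w_{j}(\varphi_{j}-\varphi)|\leq c\|\varphi_{j}-\varphi\|_{C^{k}_{{\rm L}^*}}$ with $c,k$ as above, and this tends to $0$ because $\varphi_{j}\to\varphi$ in the Fr\'echet space $C_{{\rm L}^*}^{\infty}(\overline{\Omega})$, hence in each of the seminorms $\|\cdot\|_{C^{k}_{{\rm L}^*}}$. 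Combining the two estimates yields $w_j(\varphi_j)\to w(\varphi)$.

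The only genuine subtlety, and the step I expect to be the crux, is the appeal to uniform boundedness in Part (i): one must know that a single pair $(c,k)$ works simultaneously for all $j$, which is precisely where the Banach--Steinhaus theorem on the Fr\'echet space $C_{{\rm L}^*}^{\infty}(\overline{\Omega})$ enters through Lemma \ref{LEM: UniformBoundedness}. Everything else is a routine $\varepsilon$-splitting argument. This is why the lemma is stated just before the theorem; once it is available, the proof is essentially bookkeeping.
\end{proof}
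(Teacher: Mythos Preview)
Your proposal is correct and follows the standard Banach--Steinhaus route that one expects here. The paper itself does not give a proof of this theorem: at the start of Section~\ref{SEC:TD} the authors state that they omit all proofs in this preliminary section as straightforward extensions of those in \cite{Ruzhansky-Tokmagambetov:IMRN}, so there is no in-paper argument to compare against; your sketch is precisely the classical argument one would supply, and the role you assign to Lemma~\ref{LEM: UniformBoundedness} is exactly why it is placed immediately before the theorem.
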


Similarly to the previous case, we have analogues of
Proposition \ref{TH: UniBdd} and Theorem \ref{TH: Com-nessDistr}
for ${\rm L^{\ast}}$-distributions.

\subsection{${\rm L}$-Fourier transform} \label{SEC:FT}

In this subsection we define the ${\rm L}$-Fourier transform
generated by our operator ${\rm L}$ and its main properties. The
main difference between the self-adjoint and non-self-adjoint
problems ${\rm L}$ is that in the latter case we have to make sure
that we use the right functions from the available biorthogonal
families of $u_\xi$ and $v_\xi$. We start by defining the spaces
that we will obtain on the Fourier transform side.


\medskip
Let $\mathcal S(\ind)$ denote the space of rapidly decaying
functions $\varphi:\ind\rightarrow\mathbb C$. That is,
$\varphi\in\mathcal S(\ind)$ if for any $M<\infty$ there
exists a constant $C_{\varphi, M}$ such that
$$
|\varphi(\xi)|\leq C_{\varphi, M}\langle\xi\rangle^{-M}
$$
holds for all $\xi\in\ind$.
Here $\langle\xi\rangle$ is already adapted to our case
since it is defined by \eqref{EQ:angle}.

The topology on $\mathcal
S(\ind)$ is given by the seminorms $p_{k}$, where
$k\in\mathbb N_{0}$ and $$p_{k}(\varphi):=\sup_{\xi\in\ind}\langle\xi\rangle^{k}|\varphi(\xi)|.$$
Continuous linear functionals on $\mathcal S(\ind)$ are of
the form
$$
\varphi\mapsto\langle u, \varphi\rangle:=\sum_{\xi\in\ind}u(\xi)\varphi(\xi),
$$
where functions $u:\ind \rightarrow \mathbb C$ grow at most
polynomially at infinity, i.e. there exist constants $M<\infty$
and $C_{u, M}$ such that
$$
|u(\xi)|\leq C_{u, M}\langle\xi\rangle^{M}
$$
holds for all $\xi\in\ind$. Such distributions $u:\ind
\rightarrow \mathbb C$ form the space of distributions which we denote by
$\mathcal S'(\ind)$.
We now define the L-Fourier transform on $C_{{\rm L}}^{\infty}(\overline{\Omega})$.

\begin{defn} \label{FT}
We define the ${\rm L}$-Fourier transform
$$
(\mathcal F_{{\rm L}}f)(\xi)=(f\mapsto\widehat{f}):
C_{{\rm L}}^{\infty}(\overline{\Omega})\rightarrow\mathcal S(\ind)
$$
by
\begin{equation}
\label{FourierTr}
\widehat{f}(\xi):=(\mathcal F_{{\rm L}}f)(\xi)=\int_{\Omega}f(x)\overline{v_{\xi}(x)}dx.
\end{equation}
Analogously, we define the ${\rm L}^{\ast}$-Fourier
transform
$$
(\mathcal F_{{\rm L}^{\ast}}f)(\xi)=(f\mapsto\widehat{f}_{\ast}):
C_{{\rm L}^{\ast}}^{\infty}(\overline{\Omega})\rightarrow\mathcal
S(\ind)
$$
by
\begin{equation}\label{ConjFourierTr}
\widehat{f}_{\ast}(\xi):=(\mathcal F_{{\rm
L}^{\ast}}f)(\xi)=\int_{\Omega}f(x)\overline{u_{\xi}(x)}dx.
\end{equation}
\end{defn}

The expressions \eqref{FourierTr} and \eqref{ConjFourierTr}
are well-defined by the Cauchy-Schwarz inequality, for example,
\begin{equation}
\label{EQ: Ineq1}
|\widehat{f}(\xi)|=\left|\int_{\Omega}f(x)\overline{v_{\xi}(x)}dx\right|\leq\|f\|_{L^{2}}
\|v_{\xi}\|_{L^{2}}=\|f\|_{L^{2}}<\infty.
\end{equation}
Moreover, we have

\begin{prop}\label{LEM: FTinS}
The ${\rm L}$-Fourier transform
$\mathcal F_{{\rm L}}$ is a bijective homeomorphism from $C_{{\rm
L}}^{\infty}(\overline{\Omega})$ to $\mathcal S(\ind)$.
Its inverse  $$\mathcal F_{{\rm L}}^{-1}: \mathcal S(\ind)
\rightarrow C_{{\rm L}}^{\infty}(\overline{\Omega})$$ is given by
\begin{equation}
\label{InvFourierTr} (\mathcal F^{-1}_{{\rm
L}}h)(x)=\sum_{\xi\in\ind}h(\xi)u_{\xi}(x),\quad h\in\mathcal S(\ind),
\end{equation}
so that the Fourier inversion formula becomes
\begin{equation}
\label{InvFourierTr0}
f(x)=\sum_{\xi\in\ind}\widehat{f}(\xi)u_{\xi}(x)
\quad \textrm{ for all } f\in C_{{\rm
L}}^{\infty}(\overline{\Omega}).
\end{equation}
Similarly,  $\mathcal F_{{\rm L}^{\ast}}:C_{{\rm L}^{\ast}}^{\infty}(\overline{\Omega})\to \mathcal S(\ind)$
is a bijective homeomorphism and its inverse
$$\mathcal F_{{\rm L}^{\ast}}^{-1}: \mathcal S(\ind)\rightarrow
C_{{\rm L}^{\ast}}^{\infty}(\overline{\Omega})$$ is given by
\begin{equation}
\label{ConjInvFourierTr} (\mathcal F^{-1}_{{\rm
L}^{\ast}}h)(x):=\sum_{\xi\in\ind}h(\xi)v_{\xi}(x), \quad h\in\mathcal S(\ind),
\end{equation}
so that the conjugate Fourier inversion formula becomes
\begin{equation}
\label{ConjInvFourierTr0} f(x)=\sum_{\xi\in\ind}\widehat{f}_{\ast}(\xi)v_{\xi}(x)\quad \textrm{ for all } f\in C_{{\rm
L^*}}^{\infty}(\overline{\Omega}).
\end{equation}
\end{prop}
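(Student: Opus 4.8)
The plan is to show that $\mathcal F_{{\rm L}}$ and the candidate map $\mathcal F_{{\rm L}}^{-1}$ defined by \eqref{InvFourierTr} are each continuous and are mutually inverse; a continuous bijection between Fréchet spaces whose set-theoretic inverse is also continuous is automatically a homeomorphism, so this will suffice. The statement for $\mathcal F_{{\rm L}^*}$ follows from the same argument with the roles of $\{u_\xi\}$ and $\{v_\xi\}$ interchanged and $\lambda_\xi$ replaced by $\overline{\lambda_\xi}$ (note that $\langle\xi\rangle$ in \eqref{EQ:angle} is unchanged, and that $\{v_\xi\}$ is a Riesz basis with biorthogonal system $\{u_\xi\}$, as recorded after \eqref{BiorthProp}), so I only treat $\mathcal F_{{\rm L}}$.

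The first step is the algebraic identity that converts $\mathrm L$ into multiplication by $\lambda_\xi$. For $f\in C^\infty_{{\rm L}}(\overline\Omega)$ every power $\mathrm L^j f$ is defined, $v_\xi\in C^\infty_{{\rm L}^*}(\overline\Omega)$ with $({\rm L}^*)^k v_\xi=\overline{\lambda_\xi}^{\,k} v_\xi$, and iterating the duality \eqref{EQ:duality} gives $\widehat{{\rm L}^k f}(\xi)=({\rm L}^k f,v_\xi)_{L^2}=(f,({\rm L}^*)^k v_\xi)_{L^2}=\lambda_\xi^k\,\widehat f(\xi)$ for every $k\in\mathbb N_0$. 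Applying the Cauchy--Schwarz bound \eqref{EQ: Ineq1} to ${\rm L}^k f$ yields $|\lambda_\xi|^k|\widehat f(\xi)|\le\|{\rm L}^k f\|_{L^2}=\|f\|_{C^k_{{\rm L}}}$, and then, using $(1+t^2)^N\le 2^N(1+t^{2N})$,
$$\langle\xi\rangle^{2mN}\,|\widehat f(\xi)|=(1+|\lambda_\xi|^2)^N|\widehat f(\xi)|\le 2^{N+1}\,\|f\|_{C^{2N}_{{\rm L}}}\qquad\text{for all }N\in\mathbb N_0.$$
Hence $\widehat f\in\mathcal S(\ind)$ and $\mathcal F_{{\rm L}}:C^\infty_{{\rm L}}(\overline\Omega)\to\mathcal S(\ind)$ is continuous.

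Next I show $\mathcal F_{{\rm L}}^{-1}$ is well defined and continuous. Fix $h\in\mathcal S(\ind)$. From \eqref{EQ:angle} one has $|\lambda_\xi|\le\langle\xi\rangle^m$, so for each $j$ the Riesz basis upper bound $\|\sum_\xi c_\xi u_\xi\|_{L^2}^2\le B\sum_\xi|c_\xi|^2$ gives that the partial sums of $\sum_\xi\lambda_\xi^j h(\xi)u_\xi$ are Cauchy in $L^2(\Omega)$, since $\sum_\xi|\lambda_\xi^j h(\xi)|^2\le p_N(h)^2\sum_\xi\langle\xi\rangle^{2mj-2N}<\infty$ once $N$ is large enough that $\langle\xi\rangle^{2mj-2N}$ is summable over $\ind$. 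Because $\mathrm L$ is closed, passing to the limit shows $\mathcal F_{{\rm L}}^{-1}h:=\sum_\xi h(\xi)u_\xi\in\mathrm{Dom}({\rm L}^j)$ with ${\rm L}^j\mathcal F_{{\rm L}}^{-1}h=\sum_\xi\lambda_\xi^j h(\xi)u_\xi$ for all $j$, i.e. $\mathcal F_{{\rm L}}^{-1}h\in C^\infty_{{\rm L}}(\overline\Omega)$; the same estimate gives $\|\mathcal F_{{\rm L}}^{-1}h\|_{C^k_{{\rm L}}}\le C_k\,p_{N(k)}(h)$, so $\mathcal F_{{\rm L}}^{-1}$ is continuous. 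For the compositions: for $f\in C^\infty_{{\rm L}}(\overline\Omega)\subset L^2(\Omega)$, the defining property of the Riesz basis $\{u_\xi\}$ with biorthogonal system $\{v_\xi\}$ gives the unique expansion $f=\sum_\xi(f,v_\xi)_{L^2}u_\xi=\sum_\xi\widehat f(\xi)u_\xi$, which is $\mathcal F_{{\rm L}}^{-1}\mathcal F_{{\rm L}}f=f$ and proves \eqref{InvFourierTr0}; and for $h\in\mathcal S(\ind)$, setting $f=\mathcal F_{{\rm L}}^{-1}h=\sum_\eta h(\eta)u_\eta$, the $L^2$-convergence of the series together with $(u_\eta,v_\xi)_{L^2}=\delta_{\eta\xi}$ and continuity of the inner product gives $\widehat f(\xi)=(f,v_\xi)_{L^2}=h(\xi)$, i.e. $\mathcal F_{{\rm L}}\mathcal F_{{\rm L}}^{-1}h=h$. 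Thus $\mathcal F_{{\rm L}}$ is a continuous bijection with continuous inverse $\mathcal F_{{\rm L}}^{-1}$, hence a homeomorphism.

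I expect the only genuinely delicate point to be the summability of $\langle\xi\rangle^{-M}$ over $\ind$ for $M$ large, which is what makes the series defining $\mathcal F_{{\rm L}}^{-1}h$ and all its ${\rm L}$-derivatives converge in $L^2$; this must be extracted from the discreteness of the spectrum together with the ordering \eqref{EQ: EVOrder} and $\ind\subset\mathbb Z^K$, exactly as in \cite{Ruzhansky-Tokmagambetov:IMRN}. Everything else is bookkeeping with the identity $\widehat{{\rm L}^k f}=\lambda_\xi^k\widehat f$, the bound \eqref{EQ: Ineq1}, and the two-sided Riesz basis inequalities.
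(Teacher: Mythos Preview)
The paper does not give a proof of this proposition: it states at the start of Section~\ref{SEC:TD} that all proofs in that section are omitted as straightforward extensions of those in \cite{Ruzhansky-Tokmagambetov:IMRN}. Your argument is the standard one and matches what one finds there: diagonalise ${\rm L}$ on the Fourier side via $\widehat{{\rm L}^k f}(\xi)=\lambda_\xi^k\widehat f(\xi)$ to get continuity of $\mathcal F_{\rm L}$, use the Riesz-basis inequalities to make the inverse series converge in each $C^k_{\rm L}$-norm, and invoke biorthogonality for the two compositions. The use of closedness of ${\rm L}$ to pass to the limit in ${\rm Dom}({\rm L}^j)$ is also the right mechanism.

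One correction to your closing paragraph: the summability $\sum_{\xi\in\ind}\langle\xi\rangle^{-s_0}<\infty$ that you need for the inverse direction does \emph{not} follow from discreteness of the spectrum, the ordering \eqref{EQ: EVOrder}, and $\ind\subset\mathbb Z^K$ alone (take $\ind=\mathbb N$ and $\lambda_n=\log(n+1)$ for a counterexample). In the present paper this is precisely Assumption~\ref{Assumption_4}, introduced only later in Subsection~\ref{SEC:Schwartz}; in \cite{Ruzhansky-Tokmagambetov:IMRN} it comes instead from the stronger hypothesis that ${\rm L}$ is a differential operator of order $m$ on a bounded domain, where eigenvalue asymptotics supply the needed growth. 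So your proof is correct once one has that summability as an input, but you should cite Assumption~\ref{Assumption_4} (or the corresponding hypothesis in \cite{Ruzhansky-Tokmagambetov:IMRN}) rather than claim it is a consequence of the ordering.
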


By dualising the inverse ${\rm L}$-Fourier
transform $\mathcal F_{{\rm L}}^{-1}: \mathcal S(\ind)
\rightarrow C_{{\rm L}}^{\infty}(\overline{\Omega})$, the
${\rm L}$-Fourier transform extends uniquely to the mapping
$$\mathcal F_{{\rm L}}: \mathcal D'_{{\rm L}}(\Omega)\rightarrow
\mathcal S'(\ind)$$ by the formula
\begin{equation}\label{EQ: FTofDistr}
\langle\mathcal F_{{\rm L}}w, \varphi\rangle:=
\langle w,\overline{\mathcal F_{{\rm L}^*}^{-1}\overline{\varphi}}\rangle,
\quad\textrm{ with } w\in\mathcal D'_{{\rm L}}(\Omega),\; \varphi\in\mathcal
S(\ind).
\end{equation}
It can be readily seen that if $w\in\mathcal D'_{{\rm
L}}(\Omega)$ then $\widehat{w}\in\mathcal S'(\ind)$.
The reason for taking complex conjugates in \eqref{EQ: FTofDistr}
is that, if $w\in C_{{\rm L}}^{\infty}(\overline{\Omega})$, we have
the equality
\begin{multline*}
\langle \widehat{w},\varphi\rangle =
\sum_{\xi\in\ind} \widehat{w}(\xi) \varphi(\xi)=
\sum_{\xi\in\ind} \left( \int_\Omega w(x) \overline{v_\xi(x)}dx\right) \varphi(\xi)\\
=
\int_\Omega w(x) \overline{\left( \sum_{\xi\in\ind} \overline{\varphi(\xi)} v_\xi(x)\right)} dx
=
\int_\Omega w(x) \overline{\left( \mathcal F_{{\rm L}^*}^{-1} \overline{\varphi} \right)} dx
=\langle w,\overline{\mathcal F_{{\rm L}^*}^{-1}\overline{\varphi}}\rangle.
\end{multline*}
Analogously, we have the mapping
$$\mathcal F_{{\rm L}^*}: \mathcal D'_{{\rm L}^*}(\Omega)\rightarrow
\mathcal S'(\ind)$$ defined by the formula
\begin{equation}\label{EQ: FTofDistr2}
\langle\mathcal F_{{\rm L}^*}w, \varphi\rangle:=
\langle w,\overline{\mathcal F_{{\rm L}}^{-1}\overline{\varphi}}\rangle,
\quad\textrm{ with } w\in\mathcal D'_{{\rm L}^*}(\Omega),\; \varphi\in\mathcal
S(\ind).
\end{equation}
It can be also seen that if $w\in\mathcal D'_{{\rm
L}^*}(\Omega)$ then $\widehat{w}\in\mathcal S'(\ind)$.

We note that since systems of  $u_\xi$ and of $v_\xi$ are Riesz bases,
we can also compare $L^2$-norms of functions with sums of squares of Fourier coefficients.
The following statement follows from the work of Bari \cite[Theorem 9]{bari}:

\begin{lemma}\label{LEM: FTl2}
There exist constants $k,K,m,M>0$ such that for every $f\in L^{2}(\Omega)$
we have
$$
m^2\|f\|_{L^{2}}^2 \leq \sum_{\xi\in\ind} |\widehat{f}(\xi)|^2\leq M^2\|f\|_{L^{2}}^2
$$
and
$$
k^2\|f\|_{L^{2}}^2 \leq \sum_{\xi\in\ind} |\widehat{f}_*(\xi)|^2\leq K^2\|f\|_{L^{2}}^2.
$$
\end{lemma}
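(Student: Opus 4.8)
The plan is to deduce both pairs of inequalities directly from the defining properties of a Riesz basis together with the biorthogonality relation \eqref{BiorthProp}, with essentially no work beyond quoting \cite[Theorem 9]{bari}. Recall that one of Bari's equivalent characterisations of Riesz bases asserts that a basis $\{e_\xi\}_{\xi\in\ind}$ of a Hilbert space $H$ is a Riesz basis if and only if there are constants $0<A\leq B<\infty$ such that
$$
A\sum_{\xi\in\ind}|c_\xi|^2 \;\leq\; \Bigl\|\sum_{\xi\in\ind} c_\xi e_\xi\Bigr\|_{H}^2 \;\leq\; B\sum_{\xi\in\ind}|c_\xi|^2
$$
for every $(c_\xi)_{\xi\in\ind}\in\ell^2(\ind)$. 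Under Assumption \ref{Assumption_1} this applies to $\{u_\xi\}_{\xi\in\ind}$, and, since it has already been recorded (again via Bari) that $\{v_\xi\}_{\xi\in\ind}$ is a basis of $L^2(\Omega)$ as well, it applies to $\{v_\xi\}_{\xi\in\ind}$ too.

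First I would treat $\widehat f$. Let $f\in L^2(\Omega)$ and write its unique Riesz-basis expansion $f=\sum_{\eta\in\ind}a_\eta u_\eta$, the series converging in $L^2(\Omega)$. Since the functional $(\cdot\,,v_\xi)_{L^2}$ is continuous on $L^2(\Omega)$, we may pass the series through it and use the biorthogonality \eqref{BiorthProp} to obtain
$$
\widehat f(\xi)=(f,v_\xi)_{L^2}=\sum_{\eta\in\ind}a_\eta\,(u_\eta,v_\xi)_{L^2}=a_\xi,
$$
i.e. the ${\rm L}$-Fourier coefficients of $f$ given by \eqref{FourierTr} are precisely the coefficients of $f$ in the Riesz basis $\{u_\xi\}$. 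Plugging $c_\xi=\widehat f(\xi)$ into the two-sided estimate above for $\{u_\xi\}$ gives
$$
A\sum_{\xi\in\ind}|\widehat f(\xi)|^2 \;\leq\; \|f\|_{L^2}^2 \;\leq\; B\sum_{\xi\in\ind}|\widehat f(\xi)|^2,
$$
which is the first pair of inequalities of the lemma with $m^2:=B^{-1}$ and $M^2:=A^{-1}$.

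The argument for $\widehat f_*$ is entirely symmetric. Expanding $f=\sum_{\eta\in\ind}b_\eta v_\eta$ in the Riesz basis $\{v_\xi\}$ and using $(v_\eta,u_\xi)_{L^2}=\overline{(u_\xi,v_\eta)_{L^2}}=\delta_{\xi\eta}$ from \eqref{BiorthProp}, one gets $\widehat f_*(\xi)=(f,u_\xi)_{L^2}=b_\xi$, and the two-sided Riesz estimate for $\{v_\xi\}$ then yields the second pair, with $k^2$ and $K^2$ the reciprocals of the upper and lower Riesz bounds of $\{v_\xi\}$ respectively. Because Bari's characterisation is invoked wholesale, there is no real obstacle here; the only points that deserve a word of care are the legitimacy of interchanging the norm-convergent series with the inner product (immediate from continuity of $(\cdot\,,\cdot)_{L^2}$) and the bookkeeping identifying the Fourier coefficients of Definition \ref{FT} with the expansion coefficients via \eqref{BiorthProp} — both of which are routine.
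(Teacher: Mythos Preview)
Your argument is correct and is essentially the paper's approach: the paper does not give a proof at all but simply attributes the lemma to Bari \cite[Theorem 9]{bari}, and what you have written is exactly the standard unpacking of that citation. The only slight imprecision is that you invoke the two-sided Riesz estimate for $\{v_\xi\}$ on the grounds that it is a basis, whereas what is needed is that it is a \emph{Riesz} basis; but this too is part of Bari's theory (the biorthogonal system to a Riesz basis is again a Riesz basis), so the argument stands.
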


However, we note that the Plancherel identity can be also achieved in suitably
defined $l^2$-spaces of Fourier coefficients, see Proposition \ref{PlanchId}.

\subsection{Plancherel formula, Sobolev spaces $\mathcal
H^{s}_{{\rm L}}(\Omega)$, and their Fourier images}
\label{SEC:Sobolev}

In this subsection we discuss Sobolev spaces adapted to ${\rm
L}$ and their images under the L-Fourier transform. We
start with the $L^2$-setting, where we can recall inequalities
between $L^2$-norms of functions and sums of squares of their
Fourier coefficients, see Lemma \ref{LEM: FTl2}. However, below we
show that we actually have the Plancherel identity in a suitably
defined space $l^{2}_{{\rm L}}$ and its conjugate $l^{2}_{{\rm
L}^{*}}$.

\medskip
Let us denote by $$l^{2}_{{\rm L}}=l^2({\rm L})$$
the linear space of complex-valued functions $a$
on $\ind$ such that $\mathcal F^{-1}_{{\rm L}}a\in
L^{2}(\Omega)$, i.e. if there exists $f\in L^{2}(\Omega)$ such that $\mathcal F_{{\rm L}}f=a$.
Then the space of sequences $l^{2}_{{\rm L}}$ is a
Hilbert space with the inner product
\begin{equation}\label{EQ: InnerProd SpSeq-s}
(a,\ b)_{l^{2}_{{\rm
L}}}:=\sum_{\xi\in\ind}a(\xi)\ \overline{(\mathcal F_{{\rm
L^{\ast}}}\circ\mathcal F^{-1}_{{\rm L}}b)(\xi)}
\end{equation}
for arbitrary $a,\,b\in l^{2}_{{\rm L}}$.
The reason for this choice of the definition is the following formal calculation:
\begin{align}\label{EQ:PL-prelim} \nonumber
(a,\ b)_{l^{2}_{{\rm L}}}&
=\sum_{\xi\in\ind}a(\xi)\ \overline{(\mathcal F_{{\rm L^{\ast}}}\circ\mathcal F^{-1}_{{\rm L}}b)(\xi)}\\ \nonumber
&=\sum\limits_{\xi\in\ind
}a(\xi)\int_{\Omega}\overline{(\mathcal F^{-1}_{{\rm L}}b)(x)}u_{\xi}(x)dx\\ \nonumber
&=\int_{\Omega}\left[\sum\limits_{\xi\in\ind}a(\xi)u_{\xi}(x)\right]\overline{(\mathcal F^{-1}_{{\rm
L}}b)(x)}dx\\ \nonumber
&=\int_{\Omega}(\mathcal F^{-1}_{{\rm L}}a)(x)\overline{(\mathcal F^{-1}_{{\rm L}}b)(x)} dx\\
&=(\mathcal F^{-1}_{{\rm L}}a,\,\mathcal F^{-1}_{{\rm L}}b)_{L^{2}},
\end{align}
which implies the Hilbert space properties of the space of sequences
$l^{2}_{{\rm L}}$. The norm of $l^{2}_{{\rm L}}$ is then given by the
formula
\begin{equation}\label{EQ:l2norm}
\|a\|_{l^{2}_{{\rm L}}}=\left(\sum_{\xi\in\ind}a(\xi)\
\overline{(\mathcal F_{{\rm L^{\ast}}}\circ\mathcal F^{-1}_{{\rm
L}}a)(\xi)}\right)^{1/2}, \quad \textrm{ for all } \; a\in l^{2}_{{\rm L}}.
\end{equation}
We note that individual terms in this sum may be complex-valued but the
whole sum is real and nonnegative due to formula \eqref{EQ:PL-prelim}.

Analogously, we introduce the
Hilbert space $$l^{2}_{{\rm L^{\ast}}}=l^{2}({\rm L^{\ast}})$$
as the space of functions $a$ on $\ind$
such that $\mathcal F^{-1}_{{\rm L^{\ast}}}a\in L^{2}(\Omega)$,
with the inner product
\begin{equation}
\label{EQ: InnerProd SpSeq-s_2} (a,\ b)_{l^{2}_{{\rm
L^{\ast}}}}:=\sum_{\xi\in\ind}a(\xi)\ \overline{(\mathcal
F_{{\rm L}}\circ\mathcal F^{-1}_{{\rm L^{\ast}}}b)(\xi)}
\end{equation}
for arbitrary $a,\,b\in l^{2}_{{\rm L^{\ast}}}$. The norm of
$l^{2}_{{\rm L^{\ast}}}$ is given by the formula
$$
\|a\|_{l^{2}_{{\rm L^{\ast}}}}=\left(\sum_{\xi\in\ind}a(\xi)\
\overline{(\mathcal F_{{\rm L}}\circ\mathcal F^{-1}_{{\rm
L^{\ast}}}a)(\xi)}\right)^{1/2}
$$
for all $a\in l^{2}_{{\rm L^{\ast}}}$. The spaces of sequences
$l^{2}_{{\rm L}}$ and
$l^{2}_{{\rm L^{\ast}}}$ are thus generated by biorthogonal systems
$\{u_{\xi}\}_{\xi\in\ind}$ and $\{v_{\xi}\}_{\xi\in\ind}$.
The reason for their definition in the above forms becomes clear again
in view of the following Plancherel identity:

\begin{prop} {\rm(Plancherel's identity)}\label{PlanchId}
If $f,\,g\in L^{2}(\Omega)$ then
$\widehat{f},\,\widehat{g}\in l^{2}_{{\rm L}}, \,\,\,
\widehat{f}_{\ast},\, \widehat{g}_{\ast}\in l^{2}_{{\rm
L^{\ast}}}$, and the inner products {\rm(\ref{EQ: InnerProd SpSeq-s}),
(\ref{EQ: InnerProd SpSeq-s_2})} take the form
$$
(\widehat{f},\ \widehat{g})_{l^{2}_{{\rm L}}}=\sum_{\xi\in\ind}\widehat{f}(\xi)\ \overline{\widehat{g}_{\ast}(\xi)}
$$
and
$$
(\widehat{f}_{\ast},\ \widehat{g}_{\ast})_{l^{2}_{{\rm
L^{\ast}}}}=\sum_{\xi\in\ind}\widehat{f}_{\ast}(\xi)\
\overline{\widehat{g}(\xi)}.
$$
In particular, we have
$$
\overline{(\widehat{f},\ \widehat{g})_{l^{2}_{{\rm L}}}}=
(\widehat{g}_{\ast},\ \widehat{f}_{\ast})_{l^{2}_{{\rm
L^{\ast}}}}.
$$
The Parseval identity takes the form
\begin{equation}\label{Parseval}
(f,g)_{L^{2}}=(\widehat{f},\widehat{g})_{l^{2}_{{\rm
L}}}=\sum_{\xi\in\ind}\widehat{f}(\xi)\ \overline{\widehat{g}_{\ast}(\xi)}.
\end{equation}
Furthermore, for any $f\in L^{2}(\Omega)$, we have
$\widehat{f}\in l^{2}_{{\rm L}}$, $\widehat{f}_{\ast}\in l^{2}_{{\rm
L^{\ast}}}$, and
\begin{equation}
\label{Planch} \|f\|_{L^{2}}=\|\widehat{f}\|_{l^{2}_{{\rm
L}}}=\|\widehat{f}_{\ast}\|_{l^{2}_{{\rm L^{\ast}}}}.
\end{equation}
\end{prop}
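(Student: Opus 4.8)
The plan is to reduce the whole statement to the Riesz basis expansion provided by Assumption \ref{Assumption_1} together with the definitions \eqref{EQ: InnerProd SpSeq-s}, \eqref{EQ: InnerProd SpSeq-s_2} of the sequence spaces; the formal computation \eqref{EQ:PL-prelim} is the model for everything, and the real work lies in making its interchanges of summation and integration rigorous. The first step is to identify $\widehat f(\xi)=(f,v_\xi)_{L^2}$, for $f\in L^2(\Omega)$, with the coefficients of $f$ in the Riesz basis $\{u_\xi\}$: by Assumption \ref{Assumption_1} there is a unique series $f=\sum_{\xi\in\ind}a_\xi u_\xi$ converging in $L^2(\Omega)$, and pairing the partial sums with $v_\eta$, using biorthogonality \eqref{BiorthProp} and continuity of the $L^2$ inner product, gives $a_\eta=\widehat f(\eta)$ for every $\eta$. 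Hence $\mathcal F^{-1}_{{\rm L}}\widehat f=\sum_\xi\widehat f(\xi)u_\xi=f\in L^2(\Omega)$, so $\widehat f\in l^2_{{\rm L}}$ by definition; symmetrically, since $\{v_\xi\}$ is also a basis by Bari \cite{bari}, one has $\mathcal F^{-1}_{{\rm L}^*}\widehat f_*=f$ and therefore $\widehat f_*\in l^2_{{\rm L}^*}$. The same applies to $g$, which settles the membership assertions.

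The inner product formulas then come almost for free. Since $\mathcal F^{-1}_{{\rm L}}\widehat g=g$ from the previous step, definition \eqref{EQ: InnerProd SpSeq-s} reads $(\widehat f,\widehat g)_{l^2_{{\rm L}}}=\sum_\xi\widehat f(\xi)\,\overline{(\mathcal F_{{\rm L}^*}g)(\xi)}=\sum_\xi\widehat f(\xi)\,\overline{\widehat g_*(\xi)}$, the series converging absolutely by the Cauchy--Schwarz inequality in $\ell^2(\ind)$ together with the two-sided bounds of Lemma \ref{LEM: FTl2}. Interchanging the roles of ${\rm L}$ and ${\rm L}^*$ gives $(\widehat f_*,\widehat g_*)_{l^2_{{\rm L}^*}}=\sum_\xi\widehat f_*(\xi)\,\overline{\widehat g(\xi)}$, and the conjugation identity $\overline{(\widehat f,\widehat g)_{l^2_{{\rm L}}}}=(\widehat g_*,\widehat f_*)_{l^2_{{\rm L}^*}}$ follows by conjugating the first formula and comparing it with the second after swapping $f$ and $g$.

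For the Parseval identity \eqref{Parseval} I would use the partial sums $f_N:=\sum_{|\xi|\le N}\widehat f(\xi)u_\xi$: they converge to $f$ in $L^2(\Omega)$, hence $(f_N,g)_{L^2}\to(f,g)_{L^2}$; on the other hand $(f_N,g)_{L^2}=\sum_{|\xi|\le N}\widehat f(\xi)\,\overline{\widehat g_*(\xi)}$ directly from the definition of $\widehat g_*$, and this tends to $(\widehat f,\widehat g)_{l^2_{{\rm L}}}$ by the previous paragraph, so equating the two limits gives \eqref{Parseval}. Taking $g=f$ yields $\|f\|_{L^2}^2=\|\widehat f\|^2_{l^2_{{\rm L}}}$, and running the same argument with ${\rm L}^*$ in place of ${\rm L}$ (or simply invoking the conjugation identity) gives $\|f\|_{L^2}^2=\|\widehat f_*\|^2_{l^2_{{\rm L}^*}}$, which is \eqref{Planch}.

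The only genuinely delicate point throughout is the passage from the formal series manipulations of \eqref{EQ:PL-prelim} to honest equalities — that is, justifying each interchange of summation with integration or with the inner product. I expect this to be handled uniformly by never operating on the infinite sums directly but passing instead to the $L^2$-convergent partial sums of the Riesz expansion and using continuity of the inner product, while the absolute convergence of every scalar series appearing is guaranteed by Lemma \ref{LEM: FTl2} and the Cauchy--Schwarz inequality in $\ell^2(\ind)$.
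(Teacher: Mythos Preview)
Your proposal is correct and follows essentially the same route as the paper: the paper omits the proof here (referring to \cite{Ruzhansky-Tokmagambetov:IMRN}) but gives the formal computation \eqref{EQ:PL-prelim} as the template, and your argument is precisely the rigorous version of that computation---identifying $\widehat f(\xi)$ with the Riesz basis coefficients via biorthogonality, then passing through partial sums and invoking Lemma~\ref{LEM: FTl2} to justify the limits. There is no substantive difference in approach.
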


Now we introduce Sobolev spaces generated by the operator ${\rm L}$:

\begin{defn}[Sobolev spaces $\mathcal H^{s}_{{\rm L}}(\Omega)$] \label{SobSp}
For $f\in\mathcal D'_{{\rm L}}(\Omega)\cap \mathcal D'_{{\rm L}^{*}}(\Omega)$ and $s\in\mathbb R$, we say that
$$f\in\mathcal H^{s}_{{\rm L}}(\Omega)\; \textrm{ if and only if }\;
\langle\xi\rangle^{s}\widehat{f}(\xi)\in l^{2}_{{\rm L}}.$$
We define the norm on $\mathcal H^{s}_{{\rm L}}(\Omega)$ by
\begin{equation}\label{SobNorm}
\|f\|_{\mathcal H^{s}_{{\rm
L}}(\Omega)}:=\left(\sum_{\xi\in\ind}
\langle\xi\rangle^{2s}\widehat{f}(\xi)\overline{\widehat{f}_{\ast}(\xi)}\right)^{1/2}.
\end{equation}
The Sobolev space $\mathcal H^{s}_{{\rm L}}(\Omega)$ is then the
space of ${\rm L}$-distributions $f$ for which we have
$\|f\|_{\mathcal H^{s}_{{\rm L}}(\Omega)}<\infty$. Similarly,
we can define the
space $\mathcal H^{s}_{{\rm L^{\ast}}}(\Omega)$ by the
condition
\begin{equation}\label{SobNorm2}
\|f\|_{\mathcal H^{s}_{{\rm
L^{\ast}}}(\Omega)}:=\left(\sum_{\xi\in\ind}\langle\xi\rangle^{2s}\widehat{f}_{\ast}(\xi)\overline{\widehat{f}(\xi)}\right)^{1/2}<\infty.
\end{equation}
\end{defn}
We note that the expressions in \eqref{SobNorm} and
\eqref{SobNorm2} are well-defined since the sum
$$
\sum_{\xi\in\ind}
\langle\xi\rangle^{2s}\widehat{f}(\xi)\overline{\widehat{f}_{\ast}(\xi)}=
(\langle\xi\rangle^{s}\widehat{f}(\xi),\langle\xi\rangle^{s}\widehat{f}(\xi))_{l^{2}_{\rm L}}\geq 0
$$
is real and non-negative.
Consequently, since we can write the sum in \eqref{SobNorm2} as the
complex conjugate of that in  \eqref{SobNorm}, and with both being real,
we see that the spaces $\mathcal H^{s}_{{\rm L}}(\Omega)$ and
$\mathcal H^{s}_{{\rm L^{\ast}}}(\Omega)$ coincide as sets. Moreover, we have

\begin{prop}\label{SobHilSpace}
For every $s\in\mathbb R$, the Sobolev space
$\mathcal H^{s}_{{\rm L}}(\Omega)$ is a Hilbert space with the
inner product
$$
(f,\ g)_{\mathcal H^{s}_{{\rm L}}(\Omega)}:=\sum_{\xi\in\ind
}\langle\xi\rangle^{2s}\widehat{f}(\xi)\overline{\widehat{g}_{\ast}(\xi)}.
$$
Similarly,
the Sobolev space
$\mathcal H^{s}_{{\rm L^{\ast}}}(\Omega)$ is a Hilbert space with
the inner product
$$
(f,\ g)_{\mathcal H^{s}_{{\rm
L^{\ast}}}(\Omega)}:=\sum_{\xi\in\ind}\langle\xi\rangle^{2s}\widehat{f}_{\ast}(\xi)\overline{\widehat{g}(\xi)}.
$$
For every $s\in\mathbb R$, the Sobolev spaces $\mathcal
H^{s}(\Omega)$, $\mathcal H^{s}_{{\rm L}}(\Omega)$,
and $\mathcal H^{s}_{{\rm L}^*}(\Omega)$ are
isometrically isomorphic.
\end{prop}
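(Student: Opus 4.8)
The plan is to reduce the statement to the Hilbert space structure of $l^{2}_{{\rm L}}$ (and of $l^{2}_{{\rm L^{\ast}}}$, and of $L^{2}(\Omega)$), which has already been established, by transporting it along the ${\rm L}$-Fourier transform twisted by the weight $\langle\xi\rangle^{s}$. Concretely, I would work with the map
\[
\Phi_{s}\colon\ \mathcal H^{s}_{{\rm L}}(\Omega)\longrightarrow l^{2}_{{\rm L}},\qquad (\Phi_{s}f)(\xi):=\langle\xi\rangle^{s}\widehat f(\xi),
\]
which is well defined and takes values in $l^{2}_{{\rm L}}$ by the very definition of $\mathcal H^{s}_{{\rm L}}(\Omega)$. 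The key point is the sesquilinear identity
\[
\sum_{\xi\in\ind}\langle\xi\rangle^{2s}\widehat f(\xi)\,\overline{\widehat g_{\ast}(\xi)}=\big(\langle\xi\rangle^{s}\widehat f,\ \langle\xi\rangle^{s}\widehat g\big)_{l^{2}_{{\rm L}}}=(\Phi_{s}f,\Phi_{s}g)_{l^{2}_{{\rm L}}},
\]
the polarised form of the scalar identity recorded immediately before the statement, which in turn rests on Plancherel's identity (Proposition \ref{PlanchId}) and the computation \eqref{EQ:PL-prelim}. This identity shows at once that the bilinear form in the statement is sesquilinear, Hermitian and non-negative, and that $\Phi_{s}$ is an isometry.

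It then remains to check that $\Phi_{s}$ is a bijection. Injectivity is clear: $\langle\xi\rangle^{s}\neq0$, and $\mathcal F_{{\rm L}}$ is injective on $\mathcal D'_{{\rm L}}(\Omega)$ by the distributional extension of Proposition \ref{LEM: FTinS}. For surjectivity, given $a\in l^{2}_{{\rm L}}$ I would put $f:=\mathcal F^{-1}_{{\rm L}}\big(\langle\xi\rangle^{-s}a\big)$; since $a$ is bounded, $\langle\xi\rangle^{-s}a\in\mathcal S'(\ind)$, so $f\in\mathcal D'_{{\rm L}}(\Omega)$, and one checks $f\in\mathcal D'_{{\rm L^{\ast}}}(\Omega)$ as well --- this is automatic when $s\geq0$, where $f\in L^{2}(\Omega)$ because $\langle\xi\rangle\geq1$, and in general follows from the isomorphism properties of $\mathcal F_{{\rm L}}$ and $\mathcal F_{{\rm L^{\ast}}}$ on the distribution spaces. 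Then $\widehat f=\langle\xi\rangle^{-s}a$, hence $\langle\xi\rangle^{s}\widehat f=a\in l^{2}_{{\rm L}}$, i.e.\ $f\in\mathcal H^{s}_{{\rm L}}(\Omega)$ and $\Phi_{s}f=a$. Thus $\Phi_{s}$ is an isometric isomorphism onto the Hilbert space $l^{2}_{{\rm L}}$; pulling back the inner product gives the form in the statement, and completeness of $\mathcal H^{s}_{{\rm L}}(\Omega)$ is inherited along $\Phi_{s}$. The space $\mathcal H^{s}_{{\rm L^{\ast}}}(\Omega)$ is treated in the same way with $u_{\xi}\leftrightarrow v_{\xi}$, $\widehat{\cdot}\leftrightarrow\widehat{\cdot}_{\ast}$, $l^{2}_{{\rm L}}\leftrightarrow l^{2}_{{\rm L^{\ast}}}$, via $(\Phi^{\ast}_{s}f)(\xi):=\langle\xi\rangle^{s}\widehat f_{\ast}(\xi)$.

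For the three-way isometric isomorphism I would compose the maps already constructed. We have isometric isomorphisms $\Phi_{s}\colon\mathcal H^{s}_{{\rm L}}(\Omega)\to l^{2}_{{\rm L}}$, $\Phi^{\ast}_{s}\colon\mathcal H^{s}_{{\rm L^{\ast}}}(\Omega)\to l^{2}_{{\rm L^{\ast}}}$, and (by Proposition \ref{PlanchId}) $\mathcal F^{-1}_{{\rm L}}\colon l^{2}_{{\rm L}}\to L^{2}(\Omega)$, $\mathcal F_{{\rm L^{\ast}}}\colon L^{2}(\Omega)\to l^{2}_{{\rm L^{\ast}}}$; hence $a\mapsto\mathcal F_{{\rm L^{\ast}}}\big(\mathcal F^{-1}_{{\rm L}}a\big)$ is an isometric isomorphism $l^{2}_{{\rm L}}\to l^{2}_{{\rm L^{\ast}}}$, and composing yields an isometric isomorphism $\mathcal H^{s}_{{\rm L}}(\Omega)\cong\mathcal H^{s}_{{\rm L^{\ast}}}(\Omega)$. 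Equivalently one may use the single isometry $\Psi_{s}:=\mathcal F^{-1}_{{\rm L}}\circ\Phi_{s}\colon\mathcal H^{s}_{{\rm L}}(\Omega)\to L^{2}(\Omega)$, $\Psi_{s}f=\sum_{\xi\in\ind}\langle\xi\rangle^{s}\widehat f(\xi)u_{\xi}$, and its $\ast$-analogue. Finally, $\mathcal H^{s}(\Omega)$ is brought into the chain by its definition; in the self-adjoint case ${\rm L}={\rm L^{\ast}}$ all three spaces coincide outright, and for $s=0$ the assertion is exactly \eqref{Planch}.

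The sesquilinearity, positivity and Hilbert-space axioms are automatic once the transport maps are in place, so the only step that really needs care --- the main obstacle --- is the identity displayed in the first paragraph: that multiplication of Fourier coefficients by $\langle\xi\rangle^{s}$ is compatible with the non-self-adjoint bookkeeping linking $\widehat f$ and $\widehat f_{\ast}$ through $\mathcal F_{{\rm L^{\ast}}}\circ\mathcal F^{-1}_{{\rm L}}$, so that \eqref{SobNorm} is real, non-negative and coincides with $(\Phi_{s}f,\Phi_{s}g)_{l^{2}_{{\rm L}}}$. This is precisely where Plancherel's identity (Proposition \ref{PlanchId}), rather than a naive orthonormal-basis manipulation, has to be invoked; with that in hand the rest is formal.
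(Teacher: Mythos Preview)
The paper omits the proof of this proposition entirely, noting at the start of Section~\ref{SEC:TD} that all proofs there are straightforward extensions of those in \cite{Ruzhansky-Tokmagambetov:IMRN}. Your argument --- transporting the Hilbert space structure of $l^{2}_{\rm L}$ along the weighted Fourier map $\Phi_{s}$ via Plancherel's identity, with the key sesquilinear identity being precisely the polarisation of what the paper records immediately before the statement --- is correct and is the natural route the authors have in mind.
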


\subsection{Spaces $l^{p}({\rm L})$ and $l^{p}({\rm L}^*)$}
\label{SEC:lp}

In this subsection we describe the $p$-Lebesgue versions of the
spaces of Fourier coefficients. These spaces can be considered as
the extension of the usual $l^p$ spaces on the discrete set $\ind$
adapted to the fact that we are dealing with biorthogonal systems.

\begin{defn}
Thus, we introduce the spaces $l^{p}_{\rm L}=l^{p}({\rm L})$ as
the spaces of all $a\in\mathcal S'(\ind)$ such that
\begin{equation}\label{EQ:norm1}
\|a\|_{l^{p}({\rm L})}:=\left(\sum_{\xi\in\ind}| a(\xi)|^{p}
\|u_{\xi}\|^{2-p}_{L^{\infty}(\Omega)} \right)^{1/p}<\infty,\quad
\textrm{ for }\; 1\leq p\leq2,
\end{equation}
and
\begin{equation}\label{EQ:norm2}
\|a\|_{l^{p}({\rm L})}:=\left(\sum_{\xi\in\ind}| a(\xi)|^{p}
\|v_{\xi}\|^{2-p}_{L^{\infty}(\Omega)} \right)^{1/p}<\infty,\quad
\textrm{ for }\; 2\leq p<\infty,
\end{equation}
and, for $p=\infty$,
$$
\|a\|_{l^{\infty}({\rm L})}:=\sup_{\xi\in\ind}\left( |a(\xi)|\cdot
\|v_{\xi}\|^{-1}_{L^{\infty}(\Omega)}\right)<\infty.
$$
\end{defn}

\begin{rem}\label{REM:lps}
We note that in the case of $p=2$, we have already defined the
space $l^{2}({\rm L})$ by the norm \eqref{EQ:l2norm}. There is no
problem with this since the norms
\eqref{EQ:norm1}-\eqref{EQ:norm2} with $p=2$ are equivalent to
that in \eqref{EQ:l2norm}. Indeed, by Lemma \ref{LEM: FTl2} the
first one gives a homeomorphism between $l^{p}({\rm L})$ with
$p=2$ just defined and $L^{2}(\Omega)$ while the space $l^{2}({\rm
L})$ defined by \eqref{EQ:l2norm} is isometrically isomorphic to
$L^{2}(\Omega)$ by the Plancherel identity in Proposition
\ref{PlanchId}. Therefore, both norms lead to the same space which
we denote by $l^{2}({\rm L})$. The norms
\eqref{EQ:norm1}-\eqref{EQ:norm2} with $p=2$ and the one in
\eqref{EQ:l2norm} are equivalent, but there are advantages in
using both of them. Thus, the norms
\eqref{EQ:norm1}-\eqref{EQ:norm2} allow us to view $l^{2}({\rm
L})$ as a member of the scale of spaces $l^{p}({\rm L})$ for
$1\leq p\leq \infty$ with subsequent functional analytic
properties, while the norm \eqref{EQ:l2norm} is the one for which
the Plancherel identity \eqref{Planch} holds.
\end{rem}

Analogously, we also introduce spaces $l^{p}_{{\rm
L^{\ast}}}=l^{p}({\rm L^{\ast}})$ as the spaces of all
$b\in\mathcal S'(\ind)$ such that the following norms are finite:
$$
\|b\|_{l^{p}({\rm L^{\ast}})}=\left(\sum_{\xi\in\ind}| b(\xi)|^{p}
\|v_{\xi}\|^{2-p}_{L^{\infty}(\Omega)} \right)^{1/p},\quad
\textrm{ for }\; 1\leq p\leq2,
$$
$$
\|b\|_{l^{p}({\rm L^{\ast}})}=\left(\sum_{\xi\in\ind}| b(\xi)|^{p}
\|u_{\xi}\|^{2-p}_{L^{\infty}(\Omega)} \right)^{1/p},\quad
\textrm{ for }\; 2\leq p<\infty,
$$
$$
\|b\|_{l^{\infty}({\rm
L^{\ast}})}=\sup_{\xi\in\ind}\left(|b(\xi)|\cdot
\|u_{\xi}\|^{-1}_{L^{\infty}(\Omega)}\right).
$$
Before we discuss several basic properties of the spaces
$l^{p}({\rm L})$, we recall a useful fact on the interpolation of
weighted spaces from Bergh and L\"ofstr\"om \cite[Theorem
5.5.1]{Bergh-Lofstrom:BOOK-Interpolation-spaces}:

\begin{theorem}[Interpolation of weighted spaces] \label{TH: IWS}
Let us write $d\mu_{0}(x)=\omega_{0}(x)d\mu(x),$
$d\mu_{1}(x)=\omega_{1}(x)d\mu(x),$ and write
$L^{p}(\omega)=L^{p}(\omega d\mu)$ for the weight $\omega$.
Suppose that $0<p_{0}, p_{1}<\infty$. Then
$$
(L^{p_{0}}(\omega_{0}), L^{p_{1}}(\omega_{1}))_{\theta,
p}=L^{p}(\omega),
$$
where $0<\theta<1$,
$\frac{1}{p}=\frac{1-\theta}{p_{0}}+\frac{\theta}{p_{1}}$, and
$\omega=\omega_{0}^{\frac{p(1-\theta)}{p_{0}}}\omega_{1}^{\frac{p\theta}{p_{1}}}$.
\end{theorem}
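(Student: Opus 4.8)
The plan is to reduce the weighted identity to the classical unweighted real-interpolation theorem
$\left(L^{p_0}(d\nu),L^{p_1}(d\nu)\right)_{\theta,p}=L^{p}(d\nu)$, valid for an arbitrary $\sigma$-finite measure $\nu$ and $0<p_0,p_1<\infty$ (this is the equality $L^{p,p}(d\nu)=L^{p}(d\nu)$; Bergh--L\"ofstr\"om, Theorem~5.2.1), by conjugating the whole couple with a single multiplication operator. The real interpolation method is a functor, so an isometric isomorphism of couples $(A_0,A_1)\to(B_0,B_1)$ induces an isometric isomorphism $(A_0,A_1)_{\theta,p}\to(B_0,B_1)_{\theta,p}$; hence it suffices to produce one operator $Sf:=\rho f$, multiplication by a fixed positive measurable $\rho$, that maps $\left(L^{p_0}(\omega_0),L^{p_1}(\omega_1)\right)$ isometrically onto a couple of the form $\left(L^{p_0}(\sigma),L^{p_1}(\sigma)\right)$ with a single common weight $\sigma$, and to check that the same $S$ carries $L^{p}(\sigma)$ back onto $L^{p}(\omega)$. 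Throughout we assume $\omega_0,\omega_1>0$ a.e., so that all powers and quotients of weights below make sense a.e.; the argument is insensitive to whether $p<1$ or $p\ge 1$, since the $(\theta,p)$-method and the quoted unweighted theorem are available in the quasi-Banach range as well.

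For $p_0\neq p_1$, put $\rho:=(\omega_1/\omega_0)^{1/(p_1-p_0)}$ and $\sigma:=\omega_0^{p_1/(p_1-p_0)}\omega_1^{-p_0/(p_1-p_0)}$. A one-line computation gives $\rho^{-p_0}\omega_0=\rho^{-p_1}\omega_1=\sigma$, so $Sf=\rho f$ is simultaneously an isometry of $L^{p_0}(\omega_0)$ onto $L^{p_0}(\sigma)$ and of $L^{p_1}(\omega_1)$ onto $L^{p_1}(\sigma)$; by functoriality $S$ is then an isometric isomorphism of $\left(L^{p_0}(\omega_0),L^{p_1}(\omega_1)\right)_{\theta,p}$ onto $\left(L^{p_0}(\sigma),L^{p_1}(\sigma)\right)_{\theta,p}$. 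With $d\nu:=\sigma\,d\mu$ the latter equals $\left(L^{p_0}(d\nu),L^{p_1}(d\nu)\right)_{\theta,p}=L^{p}(d\nu)=L^{p}(\sigma)$ by the unweighted theorem. Finally the same $S$ maps $L^{p}(\omega)$ isometrically onto $L^{p}(\rho^{-p}\omega)$, so all that is left is the bookkeeping identity $\rho^{-p}\omega=\sigma$, i.e. $\omega=\rho^{p}\sigma=\omega_0^{(p_1-p)/(p_1-p_0)}\omega_1^{(p-p_0)/(p_1-p_0)}$; substituting $p=p_0p_1/\bigl((1-\theta)p_1+\theta p_0\bigr)$ one checks $(p_1-p)/(p_1-p_0)=p(1-\theta)/p_0$ and $(p-p_0)/(p_1-p_0)=p\theta/p_1$, which is exactly the asserted formula for $\omega$. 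Since $S^{-1}\circ S=\mathrm{id}$, this identifies $\left(L^{p_0}(\omega_0),L^{p_1}(\omega_1)\right)_{\theta,p}$ isometrically with $L^{p}(\omega)$.

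The degenerate case $p_0=p_1$ (where $1/p=1/p_0$ forces $p=p_0=p_1$ and $\omega=\omega_0^{1-\theta}\omega_1^{\theta}$) must be done separately, by a direct $K$-functional computation: the equivalent $p$-th power $K$-functional is obtained by minimizing under the integral sign, $\widetilde K(t,f)^{p}=\int_\Omega\inf_{a+b=f(x)}\bigl(|a|^{p}\omega_0(x)+t^{p}|b|^{p}\omega_1(x)\bigr)\,d\mu(x)$, the inner infimum is comparable, up to a constant depending only on $p$, to $|f(x)|^{p}\min(\omega_0(x),t^{p}\omega_1(x))$, and inserting this into the $(\theta,p)$-norm and evaluating the elementary $t$-integral (finite because $0<\theta<1$) returns a constant multiple of $\|f\|_{L^{p}(\omega)}^{p}$. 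Apart from this, the genuine content, and the only real obstacle, is the unweighted input $\left(L^{p_0}(d\nu),L^{p_1}(d\nu)\right)_{\theta,p}=L^{p}(d\nu)$ itself, whose proof runs through the Peetre/Holmstedt description of the $K$-functional of $(L^{p_0},L^{p_1})$ in terms of the decreasing rearrangement and the identification $L^{p,p}=L^{p}$; I would invoke it from the literature rather than reprove it. Everything else above is the functoriality of the $(\theta,p)$-method under isometries together with the elementary algebra of the weights.
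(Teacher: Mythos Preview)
Your argument is correct. The paper does not prove this statement at all: it is quoted verbatim as Theorem~5.5.1 of Bergh--L\"ofstr\"om and used as a black box, so there is no ``paper's own proof'' to compare against beyond that citation. Your reduction---conjugating by the multiplication operator $Sf=\rho f$ with $\rho=(\omega_1/\omega_0)^{1/(p_1-p_0)}$ so that both endpoint spaces acquire the \emph{same} weight $\sigma$, then invoking the unweighted identity $(L^{p_0}(d\nu),L^{p_1}(d\nu))_{\theta,p}=L^{p}(d\nu)$ and checking the exponent bookkeeping---is exactly the standard proof, and in fact is how Bergh--L\"ofstr\"om themselves establish Theorem~5.5.1 (they phrase the conjugation slightly differently but the mechanism is identical). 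The separate treatment of the degenerate case $p_0=p_1$ via the pointwise $K$-functional is also standard and correct. One cosmetic remark: you should record explicitly that the sum space $L^{p_0}(\omega_0)+L^{p_1}(\omega_1)$ is preserved by $S$ and $S^{-1}$, which is automatic from the simultaneous isometry on both endpoints, so that functoriality of the $K$-method applies on the nose.
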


From this it is easy to check that we obtain:

\begin{corollary}[Interpolation of $l^{p}({\rm L})$ and $l^{p}({\rm
L}^{\ast})$ spaces] For $1\leq p\leq2$, we have
$$
(l^{1}({\rm L}), l^{2}({\rm L}))_{\theta,p}=l^{p}({\rm L}),
$$
$$
(l^{1}({\rm L}^{\ast}), l^{2}({\rm
L}^{\ast}))_{\theta,p}=l^{p}({\rm L}^{\ast}),
$$
where $0<\theta<1$ and $p=\frac{2}{2-\theta}$.
\end{corollary}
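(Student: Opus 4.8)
The plan is to deduce the corollary directly from Theorem \ref{TH: IWS} on the interpolation of weighted $L^p$-spaces by viewing each $l^p(\mathrm L)$ (for $1\le p\le 2$) as a weighted $L^p$-space on the measure space $\ind$ equipped with counting measure. Concretely, on $\ind$ with $d\mu$ the counting measure, set $\omega_0(\xi):=\|u_\xi\|_{L^\infty(\Omega)}$ and $\omega_1(\xi):=\|u_\xi\|_{L^\infty(\Omega)}^{0}=1$, i.e.\ $\omega_1\equiv 1$. Then by the very definition \eqref{EQ:norm1}, for $1\le p\le 2$ the space $l^p(\mathrm L)$ is exactly $L^p(\omega)$ with $d\mu_0=\omega_0\,d\mu$, $d\mu_1=\omega_1\,d\mu$, where $\omega=\omega_0^{\,p(1-\theta)/p_0}\omega_1^{\,p\theta/p_1}$; in particular $l^1(\mathrm L)=L^1(\omega_0)$ (take $p_0=1$, so the exponent $2-p$ equals $1=\omega_0^1$) and $l^2(\mathrm L)=L^2(\omega_1)=L^2(d\mu)$ (take $p_1=2$, so $2-p=0$ and the weight is trivial).

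First I would record the endpoint identifications $l^1(\mathrm L)=L^1(\omega_0)$ and $l^2(\mathrm L)=L^2(1\cdot d\mu)$, which are immediate from \eqref{EQ:norm1} upon substituting $p=1$ and $p=2$ respectively. Next I would apply Theorem \ref{TH: IWS} with $p_0=1$, $p_1=2$, $\omega_0=\|u_\cdot\|_{L^\infty}$, $\omega_1\equiv 1$, and $0<\theta<1$; the theorem yields
$$
(L^1(\omega_0),L^2(\omega_1))_{\theta,p}=L^p(\omega),\qquad
\frac1p=\frac{1-\theta}{1}+\frac{\theta}{2}=1-\frac{\theta}{2},
$$
so $p=\frac{2}{2-\theta}$, which is exactly the relation between $p$ and $\theta$ asserted in the corollary. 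It remains only to check that the resulting weight $\omega$ matches the weight appearing in \eqref{EQ:norm1}: from the formula in Theorem \ref{TH: IWS},
$$
\omega=\omega_0^{\,p(1-\theta)/p_0}\,\omega_1^{\,p\theta/p_1}
=\omega_0^{\,p(1-\theta)}
=\|u_\xi\|_{L^\infty(\Omega)}^{\,p(1-\theta)}.
$$
A short computation using $p(1-\theta)=p\bigl(\tfrac{2}{p}-1\bigr)=2-p$ (which follows from $\tfrac1p=1-\tfrac\theta2$, i.e.\ $\theta=2-\tfrac2p$, hence $1-\theta=\tfrac2p-1$) shows $\omega(\xi)=\|u_\xi\|_{L^\infty(\Omega)}^{2-p}$, precisely the weight in \eqref{EQ:norm1}. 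This identifies $(l^1(\mathrm L),l^2(\mathrm L))_{\theta,p}=l^p(\mathrm L)$. The identity for the $l^p(\mathrm L^\ast)$ scale is proved in exactly the same way, replacing $u_\xi$ by $v_\xi$ throughout in view of the definition of $\|b\|_{l^p(\mathrm L^\ast)}$ for $1\le p\le 2$.

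I do not anticipate a genuine obstacle here: the only point requiring a little care is the bookkeeping of exponents — verifying that $p(1-\theta)/p_0=2-p$ under the constraint $\tfrac1p=\tfrac{1-\theta}{p_0}+\tfrac{\theta}{p_1}$ with $p_0=1$, $p_1=2$ — together with confirming that Theorem \ref{TH: IWS} is applicable, i.e.\ that $0<p_0,p_1<\infty$ (here $1$ and $2$, so this is fine) and that the underlying measure space may be taken to be $\ind$ with counting measure and the indicated weights. One should also note in passing that the weights $\omega_0,\omega_1$ are finite and positive on $\ind$ — finiteness of $\|u_\xi\|_{L^\infty(\Omega)}$ being implicit in the definition of the spaces $l^p(\mathrm L)$ — so that the weighted $L^p$-spaces are well defined; granting this, the corollary is just the specialization of Theorem \ref{TH: IWS} described above.
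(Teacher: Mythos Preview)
Your proposal is correct and is precisely the argument the paper has in mind: the paper states the corollary immediately after Theorem \ref{TH: IWS} with the words ``From this it is easy to check that we obtain,'' and your computation---identifying $l^1(\mathrm L)$ and $l^2(\mathrm L)$ as $L^1(\omega_0)$ and $L^2(\omega_1)$ on $(\ind,d\mu)$ with $\omega_0(\xi)=\|u_\xi\|_{L^\infty}$, $\omega_1\equiv 1$, and verifying that the interpolated weight $\omega_0^{p(1-\theta)}=\|u_\xi\|_{L^\infty}^{2-p}$ matches \eqref{EQ:norm1}---is exactly this check. There is nothing to add.
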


\begin{rem}
The reason that the interpolation above is restricted to $1\leq
p\leq2$ is that the definition of $l^p$-spaces changes when we
pass $p=2$, in the sense that we use different families of
biorthogonal systems $u_\xi$ and $v_\xi$ for $p<2$ and for $p>2$.
We note that if ${\rm L}={\rm
L}^*$ is self-adjoint, so that we can take $u_\xi=v_\xi$
for all $\xi\in\ind$, then the scales $l^{p}({\rm L})$ and
$l^{p}({\rm L}^{\ast})$ coincide and satisfy interpolation
properties for all $1\leq p<\infty$.
\end{rem}

Using these interpolation properties we can establish further
properties of the Fourier transform and its inverse:

\begin{theorem}[Hausdorff-Young inequality] \label{TH: HY}
Let $1\leq p\leq2$ and $\frac{1}{p}+\frac{1}{p'}=1$. There is a
constant $C_{p}\geq 1$ such that for all $f\in L^{p}(\Omega)$ and
$a\in l^{p}({\rm L})$ we have
\begin{equation}\label{EQ:HY}
\|\widehat{f}\|_{l^{p'}({\rm L})}\leq
C_{p}\|f\|_{L^{p}(\Omega)}\quad \textrm{ and }\quad \|\mathcal
F_{{\rm L}}^{-1}a\|_{L^{p'}(\Omega)}\leq C_{p}\|a\|_{l^{p}({\rm
L})}.
\end{equation}
Similarly, we also have
\begin{equation}\label{EQ:HYast}
\|\widehat{f}_*\|_{l^{p'}({\rm L}^*)}\leq
C_{p}\|f\|_{L^{p}(\Omega)}\quad \textrm{ and } \quad \|\mathcal
F_{{\rm L}^{*}}^{-1}b\|_{L^{p'}(\Omega)}\leq
C_{p}\|b\|_{l^{p}({\rm L}^*)},
\end{equation}
for all $b\in l^{p}({\rm L}^*)$.
\end{theorem}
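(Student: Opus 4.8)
The plan is to derive \eqref{EQ:HY} and \eqref{EQ:HYast} by interpolating between the trivial endpoint estimates at $p=1$ and the Plancherel identity at $p=2$, using the interpolation of the $l^{p}({\rm L})$-scale from the Corollary above together with the classical real interpolation of Lebesgue spaces; the forward-transform estimates will then follow from the inverse-transform ones by duality. As usual, every bound below is first established on a dense subspace — finitely supported sequences for $\mathcal F_{{\rm L}}^{-1}$, and $L^{2}(\Omega)\cap L^{p}(\Omega)$ for $\mathcal F_{{\rm L}}$ — and then extended to the whole space by density and continuity.

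First I would record the two endpoints for the inverse transform. For $p=1$, by the triangle inequality and the definition \eqref{EQ:norm1} of the $l^{1}({\rm L})$-norm,
\begin{equation*}
\|\mathcal F_{{\rm L}}^{-1}a\|_{L^{\infty}(\Omega)}=\Big\|\sum_{\xi\in\ind}a(\xi)u_{\xi}\Big\|_{L^{\infty}(\Omega)}\leq\sum_{\xi\in\ind}|a(\xi)|\,\|u_{\xi}\|_{L^{\infty}(\Omega)}=\|a\|_{l^{1}({\rm L})},
\end{equation*}
so $\mathcal F_{{\rm L}}^{-1}\colon l^{1}({\rm L})\to L^{\infty}(\Omega)$ is bounded. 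For $p=2$, combining Lemma \ref{LEM: FTl2} and Remark \ref{REM:lps} (the norm \eqref{EQ:l2norm} is equivalent to \eqref{EQ:norm1}--\eqref{EQ:norm2} at $p=2$) with the Plancherel identity of Proposition \ref{PlanchId} shows that $\mathcal F_{{\rm L}}^{-1}\colon l^{2}({\rm L})\to L^{2}(\Omega)$ is bounded. Now interpolate by the real method: given $\theta\in(0,1)$ set $p=\tfrac{2}{2-\theta}\in(1,2)$, so that $\tfrac1p+\tfrac1{p'}=1$ and $\theta=\tfrac{2}{p'}$. By the Corollary, $(l^{1}({\rm L}),l^{2}({\rm L}))_{\theta,p}=l^{p}({\rm L})$, while $(L^{\infty}(\Omega),L^{2}(\Omega))_{\theta,p}=L^{p',p}(\Omega)$, the Lorentz space, and $L^{p',p}(\Omega)\hookrightarrow L^{p'}(\Omega)$ because $p\leq 2\leq p'$. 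Hence $\mathcal F_{{\rm L}}^{-1}\colon l^{p}({\rm L})\to L^{p'}(\Omega)$ is bounded for $1\leq p\leq 2$, which is the second inequality in \eqref{EQ:HY}; running the same argument with ${\rm L}$ replaced by ${\rm L}^{*}$ (and $u_{\xi}$ by $v_{\xi}$) gives the second inequality in \eqref{EQ:HYast}. One may also use complex interpolation, which, via the corresponding identification of the complex interpolants of the $l^{p}({\rm L})$-scale obtained in the same way, lands directly in $L^{p'}(\Omega)$.

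For the forward-transform estimates I would argue by duality with respect to the pairing $\langle a,b\rangle=\sum_{\xi\in\ind}a(\xi)b(\xi)$. A direct computation gives, for $f\in L^{p}(\Omega)$ and finitely supported $a$,
\begin{equation*}
\langle\mathcal F_{{\rm L}}f,a\rangle=\sum_{\xi\in\ind}\Big(\int_{\Omega}f(x)\overline{v_{\xi}(x)}\,dx\Big)a(\xi)=\int_{\Omega}f(x)\,\overline{\big(\mathcal F_{{\rm L}^{*}}^{-1}\overline{a}\big)(x)}\,dx,
\end{equation*}
so that, by Hölder's inequality on $\Omega$, the estimate for $\mathcal F_{{\rm L}^{*}}^{-1}$ just proved and $\|\overline{a}\|_{l^{p}({\rm L}^{*})}=\|a\|_{l^{p}({\rm L}^{*})}$,
\begin{equation*}
|\langle\mathcal F_{{\rm L}}f,a\rangle|\leq\|f\|_{L^{p}(\Omega)}\,\|\mathcal F_{{\rm L}^{*}}^{-1}\overline{a}\|_{L^{p'}(\Omega)}\leq C_{p}\,\|f\|_{L^{p}(\Omega)}\,\|a\|_{l^{p}({\rm L}^{*})}.
\end{equation*}
Since the weights $\|v_{\xi}\|_{L^{\infty}(\Omega)}^{2-p}$ (used in $l^{p}({\rm L}^{*})$ for $p\leq 2$) and $\|v_{\xi}\|_{L^{\infty}(\Omega)}^{2-p'}$ (used in $l^{p'}({\rm L})$ for $p'\geq 2$) satisfy $\big(\|v_{\xi}\|_{L^{\infty}}^{2-p}\big)^{1/p}\big(\|v_{\xi}\|_{L^{\infty}}^{2-p'}\big)^{1/p'}=1$, Hölder's inequality on $\ind$ identifies $l^{p'}({\rm L})$ as the dual of $l^{p}({\rm L}^{*})$ under this pairing (for $1\leq p<\infty$); taking the supremum over finitely supported $a$ with $\|a\|_{l^{p}({\rm L}^{*})}\leq 1$ therefore yields $\|\widehat{f}\|_{l^{p'}({\rm L})}\leq C_{p}\|f\|_{L^{p}(\Omega)}$, i.e. the first inequality in \eqref{EQ:HY} for $1<p\leq 2$; for $p=1$ it is immediate from $|\widehat{f}(\xi)|\leq\|f\|_{L^{1}(\Omega)}\|v_{\xi}\|_{L^{\infty}(\Omega)}$. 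The first inequality in \eqref{EQ:HYast} follows by the symmetric argument, and finally one enlarges $C_{p}$ if necessary so that $C_{p}\geq 1$.

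I expect the only real difficulty to be the bookkeeping of interpolation parameters: the Corollary pins down the second interpolation index to be exactly $p=\tfrac{2}{2-\theta}$ so that the sequence side comes out as $l^{p}({\rm L})$ on the nose, and with that choice the function side is controlled only through the Lorentz space $L^{p',p}(\Omega)$, which must then be embedded into $L^{p'}(\Omega)$ via $p\leq p'$ (alternatively one switches to the complex method to avoid this intermediate step). A secondary point is keeping track of the complex conjugations in the duality pairing and of the interplay of the two different families of weights in the $l^{p}$-spaces for $p<2$ and for $p>2$; note that, since these arguments rely only on Hölder's inequality, Plancherel, and the real interpolation of Lebesgue and of $l^{p}({\rm L})$-spaces, the hypothesis that $\Omega$ may be unbounded causes no additional difficulty.
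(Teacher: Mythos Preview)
Your proof is correct and follows precisely the approach the paper has set up for: the interpolation Corollary immediately preceding the theorem is placed there to feed into this argument, and the paper's remark after the theorem (that $C_{p}$ depends on ${\rm L}$ through the constants in Lemma~\ref{LEM: FTl2}, and equals $1$ in the self-adjoint case) confirms that the $p=2$ endpoint is handled exactly as you do, via the equivalence in Remark~\ref{REM:lps}. The paper itself omits the proof, deferring to \cite{Ruzhansky-Tokmagambetov:IMRN}, but your reconstruction matches the intended route.

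One small remark on your duality step: when you write that ``H\"older's inequality on $\ind$ identifies $l^{p'}({\rm L})$ as the dual of $l^{p}({\rm L}^{*})$'', H\"older gives only the inequality $\sup_{\|a\|\leq 1}|\langle b,a\rangle|\leq \|b\|_{l^{p'}({\rm L})}$; to get the reverse you need the standard extremiser for weighted $\ell^{p}$ duality (or simply cite Theorem~\ref{TH:Duality lp}, though in the paper that comes after). This is routine, but strictly speaking your sentence overstates what the weight identity $(2-p)/p+(2-p')/p'=0$ alone buys. Also note that your choice to obtain the forward estimate by duality rather than by a second interpolation is not merely a stylistic preference: the Corollary is stated only for the pair $(l^{1},l^{2})$ (the Bergh--L\"ofstr\"om theorem it rests on requires finite exponents), so a direct interpolation from $\mathcal F_{{\rm L}}:L^{1}\to l^{\infty}({\rm L})$ would need an additional identification of $(l^{\infty}({\rm L}),l^{2}({\rm L}))_{\theta,q}$ that the paper does not supply.
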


It follows from the proof that if ${\rm L}$ is
self-adjoint, then the $l^{2}_{L}$-norms discussed in Remark
\ref{REM:lps} coincide, and so we can put $C_{p}=1$ in
inequalities \eqref{EQ:HY} and \eqref{EQ:HYast}. If ${\rm
L}$ is not self-adjoint, $C_{p}$ may in principle depend
on ${\rm L}$ and its domain through constants from inequalities in
Lemma \ref{LEM: FTl2}.

We now turn to the duality between spaces $l^{p}({\rm L})$ and
$l^{q}({\rm L}^{\ast})$:

\begin{theorem}[Duality of $l^{p}({\rm L})$ and $l^{p'}({\rm L}^{\ast})$] \label{TH:Duality lp}
Let $1\leq p<\infty$ and $\frac{1}{p}+\frac{1}{p'}=1$. Then
$$\left(l^{p}({\rm
L})\right)'=l^{p'}({\rm L}^{\ast}) \quad \textrm{ and }\quad
\left(l^{p}({\rm L}^{\ast})\right)'=l^{p'}({\rm L}).$$
\end{theorem}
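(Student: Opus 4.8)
The plan is to recognise each of $l^{p}({\rm L})$ and $l^{p}({\rm L}^{*})$ as a weighted $\ell^{p}$-space on the index set $\ind$ and to deduce the statement from the classical identity $(\ell^{p}(\ind))'=\ell^{p'}(\ind)$, $1\le p<\infty$. For $1\le p\le 2$ set $w_{p}(\xi):=\|u_{\xi}\|_{L^{\infty}(\Omega)}^{2-p}$ and for $2\le p<\infty$ set $w_{p}(\xi):=\|v_{\xi}\|_{L^{\infty}(\Omega)}^{2-p}$, so that $\|a\|_{l^{p}({\rm L})}^{p}=\sum_{\xi\in\ind}|a(\xi)|^{p}w_{p}(\xi)$; since each $\|u_{\xi}\|_{L^{\infty}(\Omega)}$ and $\|v_{\xi}\|_{L^{\infty}(\Omega)}$ is a finite positive number, the map $a\mapsto\bigl(a(\xi)\,w_{p}(\xi)^{1/p}\bigr)_{\xi\in\ind}$ is a surjective linear isometry of $l^{p}({\rm L})$ onto $\ell^{p}(\ind)$; in particular $l^{p}({\rm L})$ is a Banach space, and similarly for $l^{p}({\rm L}^{*})$ with $u_{\xi}$ and $v_{\xi}$ interchanged. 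The pairing realising the duality will be the natural bilinear one $\langle a,b\rangle:=\sum_{\xi\in\ind}a(\xi)b(\xi)$ inherited from the $\mathcal S'(\ind)$--$\mathcal S(\ind)$ duality.

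First I would prove the inclusion $l^{p'}({\rm L}^{*})\hookrightarrow (l^{p}({\rm L}))'$. For $1<p<\infty$ this is H\"older's inequality applied to the factorisation $a(\xi)b(\xi)=\bigl(a(\xi)w_{p}(\xi)^{1/p}\bigr)\bigl(b(\xi)w_{p}(\xi)^{-1/p}\bigr)$, which gives
$$
|\langle a,b\rangle|\le \|a\|_{l^{p}({\rm L})}\Bigl(\sum_{\xi\in\ind}|b(\xi)|^{p'}w_{p}(\xi)^{1-p'}\Bigr)^{1/p'},
$$
using $-p'/p=1-p'$. The key elementary fact, to be verified by expanding, is that $w_{p}(\xi)^{1-p'}$ is exactly the weight defining $l^{p'}({\rm L}^{*})$: for $1\le p\le 2$ (so that $p'\ge 2$ and the $l^{p'}({\rm L}^{*})$-weight is $\|u_{\xi}\|_{L^{\infty}}^{2-p'}$) this amounts to the identity $(2-p)(1-p')=2-p'$, equivalent to $pp'=p+p'$, i.e.\ to $\tfrac1p+\tfrac1{p'}=1$; the range $2\le p<\infty$ is symmetric, with $v_{\xi}$ in place of $u_{\xi}$. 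For $p=1$ the inclusion $l^{\infty}({\rm L}^{*})\hookrightarrow (l^{1}({\rm L}))'$ is immediate from $|\langle a,b\rangle|\le\sum_{\xi}\bigl(|a(\xi)|\,\|u_{\xi}\|_{L^{\infty}}\bigr)\bigl(|b(\xi)|\,\|u_{\xi}\|_{L^{\infty}}^{-1}\bigr)\le\|a\|_{l^{1}({\rm L})}\|b\|_{l^{\infty}({\rm L}^{*})}$. Thus every $b\in l^{p'}({\rm L}^{*})$ defines a functional on $l^{p}({\rm L})$ of norm at most $\|b\|_{l^{p'}({\rm L}^{*})}$.

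For the converse, let $\Lambda\in (l^{p}({\rm L}))'$ with $1\le p<\infty$. Since the finitely supported sequences are dense in $l^{p}({\rm L})$ (truncations converge by dominated convergence in the absolutely convergent series $\sum_{\xi}|a(\xi)|^{p}w_{p}(\xi)$), $\Lambda$ is determined by $b(\xi):=\Lambda(e_{\xi})$, where $e_{\xi}$ is the indicator of $\{\xi\}$, with $\Lambda(a)=\sum_{\xi}a(\xi)b(\xi)$ for finitely supported $a$. To see $b\in l^{p'}({\rm L}^{*})$ when $1<p<\infty$, for each $N$ I would test $\Lambda$ on the H\"older-saturating finitely supported sequence $a_{N}(\xi):=\overline{b(\xi)}\,|b(\xi)|^{p'-2}w_{p}(\xi)^{1-p'}$ for $|\xi|\le N$ and $a_{N}(\xi):=0$ otherwise (with the summand taken to be zero where $b(\xi)=0$); a short computation using $(p'-1)p=p'$ shows that both $\|a_{N}\|_{l^{p}({\rm L})}^{p}$ and $\Lambda(a_{N})$ equal $\sum_{|\xi|\le N}|b(\xi)|^{p'}w_{p}(\xi)^{1-p'}$, so the bound $|\Lambda(a_{N})|\le\|\Lambda\|\,\|a_{N}\|_{l^{p}({\rm L})}$ rearranges to $\bigl(\sum_{|\xi|\le N}|b(\xi)|^{p'}w_{p}(\xi)^{1-p'}\bigr)^{1/p'}\le\|\Lambda\|$, and letting $N\to\infty$ together with the weight identity above gives $\|b\|_{l^{p'}({\rm L}^{*})}\le\|\Lambda\|$. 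For $p=1$ one tests on single $e_{\eta}$ to get $|b(\eta)|\le\|\Lambda\|\,\|u_{\eta}\|_{L^{\infty}}$, i.e.\ $\|b\|_{l^{\infty}({\rm L}^{*})}\le\|\Lambda\|$. Density of finitely supported sequences then yields $\Lambda(a)=\sum_{\xi}a(\xi)b(\xi)$ for all $a\in l^{p}({\rm L})$, so $b\mapsto\Lambda$ is an isometric isomorphism $l^{p'}({\rm L}^{*})\to (l^{p}({\rm L}))'$. The second identity $(l^{p}({\rm L}^{*}))'=l^{p'}({\rm L})$ follows verbatim after exchanging the roles of $u_{\xi}$ and $v_{\xi}$. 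The only delicate point --- the main, if modest, obstacle --- is the bookkeeping of which biorthogonal family ($u_{\xi}$ or $v_{\xi}$) enters the weight on each side of $p=2$, and checking that in every range the weight exponents collapse, via $(2-p)(1-p')=2-p'$, to the conjugacy relation $\tfrac1p+\tfrac1{p'}=1$; the rest is the textbook duality of weighted $\ell^{p}$-spaces.
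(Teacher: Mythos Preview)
Your proof is correct: it is the standard weighted-$\ell^{p}$ duality argument, hinging on the exponent identity $(2-p)(1-p')=2-p'$ to match the weights across the $p=2$ threshold where the definitions switch from $u_\xi$ to $v_\xi$. The paper itself does not prove this theorem --- Section~\ref{SEC:TD} opens by announcing that all proofs there are omitted as straightforward extensions of those in \cite{Ruzhansky-Tokmagambetov:IMRN}, and the argument in that companion paper follows the same route you take.

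One minor caveat worth flagging: you assert that each $\|u_\xi\|_{L^\infty(\Omega)}$ and $\|v_\xi\|_{L^\infty(\Omega)}$ is a finite positive number. Positivity is immediate from the $L^2$-normalisation, but finiteness is not guaranteed by Assumption~\ref{Assumption_1}, which only places the eigenfunctions in $L^2(\Omega)$. This is, however, implicit in the very definition of the $l^p({\rm L})$ scale (the norms \eqref{EQ:norm1}--\eqref{EQ:norm2} would be meaningless otherwise), so it is an assumption on the framework rather than a gap in your argument.
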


\subsection{Schwartz' kernel theorem} \label{SEC:Schwartz}

In our case the Schwartz kernel theorem is also valid and here we
will briefly discuss it. So, from now on we will make the
following:

\begin{assump}
\label{Assumption_4}
Assume that the number
$s_0\in\mathbb R$ is such that we have
$$\sum_{\xi\in\ind} \langle\xi\rangle^{-s_0}<\infty.$$
\end{assump}
Recalling the operator ${\rm L}^{\circ}$ in \eqref{EQ:Lo-def}
the Assumption \ref{Assumption_4} is equivalent to assuming that
the operator $({\rm I}+{\rm L^\circ L})^{-\frac{s_0}{4m}}$ is Hilbert-Schmidt on $L^2(\Omega)$.
Indeed, recalling the definition of $\langle\xi\rangle$ in \eqref{EQ:angle},
namely that $\langle\xi\rangle$ are the eigenvalues of $({\rm I}+{\rm L^\circ L})^{-\frac{s_0}{2m}}$,
the condition
that the operator $({\rm I}+{\rm L^\circ L})^{-\frac{s_0}{4m}}$ is Hilbert-Schmidt is equivalent to
the condition that
\begin{equation}\label{EQ:HS-conv}
\|({\rm I}+{\rm L^\circ L})^{-\frac{s_0}{4m}}\|_{\tt HS}^2\cong \sum_{\xi\in\ind}
\langle\xi\rangle^{-s_0}<\infty.
\end{equation}
If L is elliptic, we may expect that we can take any $s_0>n$ but this depends on the
domain.
The order $s_0$ will enter the regularity properties of the Schwartz kernels.

\medskip
We will use the notations
$$
C^{\infty}_{{\rm L}}(\overline{\Omega}\times \overline{\Omega}):=
C^{\infty}_{{\rm L}}(\overline{\Omega})\bar\otimes C^{\infty}_{{\rm L}}(\overline{\Omega})
$$
and
$$
C^{\infty}_{{\rm L^{\ast}}}(\overline{\Omega}\times \overline{\Omega}):=
C^{\infty}_{{\rm L^{\ast}}}(\overline{\Omega})\bar\otimes C^{\infty}_{{\rm L^{\ast}}}(\overline{\Omega})
$$
with the Fr\'echet topologies given by the family of tensor norms
\begin{equation}\label{EQ:L-top-Tensor}
\|\varphi\otimes\psi\|_{C^{k}_{{\rm L}}(\overline{\Omega}\times \overline{\Omega})}:=\max_{j+l\leq k}
\|{\rm L}^{j}\varphi\|_{L^2(\Omega)}\|{\rm L}^{l}\psi\|_{L^2(\Omega)}, \quad k\in\mathbb N_0,
\; \varphi, \psi\in C_{{\rm L}}^{\infty}(\overline{\Omega})
\end{equation}
and
\begin{equation}\label{EQ:L-top-Tensor-2}
\|\varphi\otimes\psi\|_{C^{k}_{{\rm L}^*}(\overline{\Omega}\times \overline{\Omega})}:=\max_{j+l\leq k}
\|({\rm L}^*)^{j}\varphi\|_{L^2(\Omega)}\|({\rm L}^*)^{l}\psi\|_{L^2(\Omega)}
\end{equation}
for all $k\in\mathbb N_0,
\; \varphi, \psi\in C_{{\rm L}^*}^{\infty}(\overline{\Omega}),$ respectively, and for the corresponding dual spaces we write
$$\mathcal D'_{{\rm L}}(\Omega\times\Omega):=
\left(C^{\infty}_{{\rm L}}(\overline{\Omega}\times \overline{\Omega})\right)^\prime, $$
$$\mathcal D'_{{\rm L^{\ast}}}(\Omega\times\Omega):=
\left(C^{\infty}_{{\rm L^{\ast}}}(\overline{\Omega}\times \overline{\Omega})\right)^\prime.$$

For any linear continuous operator
$$A:C^{\infty}_{{\rm L}}(\overline{\Omega})\rightarrow \mathcal D'_{{\rm L}}(\Omega)$$
there exists a kernel $K_{A}\in \mathcal D'_{{\rm
L}}(\Omega\times\Omega)$ such that for all $f\in C^{\infty}_{{\rm
L}}(\overline{\Omega})$, we can write, in the sense of distributions,
\begin{equation}\label{EQ:int1}
Af(x)=\int_{\Omega}K_{A}(x,y)f(y)dy.
\end{equation}
As usual, $K_{A}$ is called the Schwartz kernel of $A$.
For $f\in C^{\infty}_{{\rm L}}(\overline{\Omega})$, using the Fourier series formula
$$
f(y)=\sum\limits_{\eta\in\ind}\widehat{f}(\eta) u_{\eta}(y),
$$
we can also write
\begin{equation}\label{EQ:int2}
Af(x)=\sum\limits_{\eta\in\ind}\widehat{f}(\eta)\int_{\Omega}K_{A}(x,y)u_{\eta}(y)dy.
\end{equation}

Also, for any linear continuous operator
$$A:C^{\infty}_{{\rm L^{\ast}}}(\overline{\Omega})\rightarrow \mathcal D'_{{\rm L^{\ast}}}(\Omega)$$
there exists a kernel $\widetilde{K}_{A}\in \mathcal D'_{{\rm
L^{\ast}}}(\Omega\times\Omega)$ such that for all $f\in C^{\infty}_{{\rm
L^{\ast}}}(\overline{\Omega})$, we can write, in the sense of distributions,
\begin{equation}\label{EQ:int1}
Af(x)=\int_{\Omega}\widetilde{K}_{A}(x,y)f(y)dy.
\end{equation}

\section{${\rm L}$--admissible operators and ${\rm L}$-quantization}
\label{SEC:admissible operators}

In this section we describe the ${\rm L}$-quantization of the ${\rm L}$--admissible operator induced by
the operator ${\rm L}$.

\begin{defn} We say that the linear continuous operator
$$A:C^{\infty}_{{\rm L}}(\overline{\Omega})\rightarrow \mathcal D'_{{\rm L}}(\Omega)$$
belongs to the class of L--admissible operators if
$$
\sum\limits_{\eta\in\ind}u_{\eta}^{-1}(x)\, u_{\eta}(z)\,
\int_{\Omega}K_{A}(x,y)u_{\eta}(y)dy
$$
is in $\mathcal D'_{{\rm L}}(\Omega\times\Omega)$.
\end{defn}

\begin{rem}
\label{RM:L-admissible: simple case}
In the case when ${\rm L}$ is the Laplace operator with periodic boundary conditions on the torus $\mathbb T^{n}$ the class of L--admissible operators coincides with the class of all periodic pseudo-differential operators as in \cite{Ruzhansky-Turunen-JFAA-torus}.
\end{rem}

So, from now on we will assume that operators $A:C^{\infty}_{{\rm
L}}(\overline{\Omega})\rightarrow \mathcal D'_{{\rm L}}(\Omega)$ are
from the class of L--admissible operators.

\begin{rem}
\label{RM:L-admissible}
Note, that the expression
$$
u_{\eta}^{-1}(x)\,
\int_{\Omega}K_{A}(x,y)u_{\eta}(y)dy
$$
exists for any operator $A$ from the class of L--admissible operators. Moreover, it
is in $\mathcal D'_{{\rm L}}(\Omega)\otimes\mathcal S'(\ind).$
\end{rem}

Indeed, since $\sum\limits_{\eta\in\ind}u_{\eta}^{-1}(x)\, u_{\eta}(z)\,
\int_{\Omega}K_{A}(x,y)u_{\eta}(y)dy$ is in $\mathcal D'_{{\rm L}}(\Omega\times\Omega)$, by taking Fourier transform in z, we get this statement. We now define the L-symbol of an L-admissible operator.

\begin{defn}[${\rm L}$-Symbols of operators] \label{$L$--Symbols}
The ${\rm L}$-symbol of a linear continuous L--admissible
operator $$A:C^{\infty}_{{\rm L}}(\overline{\Omega})\rightarrow
\mathcal D'_{{\rm L}}(\Omega)$$ is defined by
$$\sigma_{A}(x, \xi):=u_{\xi}^{-1}(x)\,
\int_{\Omega}K_{A}(x,y)u_{\xi}(y)dy.$$
\end{defn}
This is well-defined as an element of
$\mathcal D'_{{\rm L}}(\Omega)\otimes\mathcal S'(\ind)$
in view of Remark \ref{RM:L-admissible}.

\medskip

Indeed, we have
$$
Au_{\xi}=\int_{\Omega}K_{A}(x,y)u_{\xi}(y)dy,
$$
and for $f\in C^{\infty}_{{\rm L}}(\overline{\Omega})$ from the expansion
$$
f(x)=\sum_{\xi\in\ind}\widehat{f}(\xi)u_{\xi}(x)
$$
and by the operator $A:C^{\infty}_{{\rm L}}(\overline{\Omega})\rightarrow
\mathcal D'_{{\rm L}}(\Omega)$ acting on $f$, we get
$$
Af(x)=\sum_{\xi\in\ind}\widehat{f}(\xi)Au_{\xi}(x)=\sum_{\xi\in\ind}\widehat{f}(\xi)\int_{\Omega}K_{A}(x,y)u_{\xi}(y)dy.
$$
Now, if we define
$$
u_{\xi}(x)\sigma_{A}(x, \xi):=\int_{\Omega}K_{A}(x,y)u_{\xi}(y)dy,
$$
we have the implication
\begin{equation}\label{EQ:KofA}
K_{A}(x,y)=\sum_{\xi\in\ind}u_{\xi}(x)\sigma_{A}(x, \xi)\overline{v_{\xi}(y)}.
\end{equation}
Therefore we obtain the following representation of the operator $A$ by its symbol:

\begin{theorem}[${\rm L}$--quantization] \label{QuanOper}
Let $$A:C^{\infty}_{{\rm L}}(\overline{\Omega})\rightarrow
\mathcal D'_{{\rm L}}(\Omega)$$ be a linear continuous L--admissible
operator with {\rm L}-symbol $\sigma_{A}\in\mathcal D'_{{\rm L}}(\Omega)\otimes\mathcal S'(\ind).$ Then the ${\rm L}$--quantization
\begin{equation}\label{Quantization}
Af(x)=\sum_{\xi\in\ind}
 \widehat{f}(\xi)\,\sigma_{A}(x, \xi)\,u_{\xi}(x)
\end{equation}
is true for every $f\in C^{\infty}_{{\rm L}}(\overline{\Omega})$ .
The {\rm L}-symbol $\sigma_{A}$ can be written as
\begin{equation}\label{FormSymb}
\sigma_{A}(x,\xi)=u_{\xi}^{-1}(x)(Au_{\xi})(x).
\end{equation}
\end{theorem}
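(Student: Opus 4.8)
The plan is to assemble the quantization formula \eqref{Quantization} from the kernel representation \eqref{EQ:KofA}, which in turn follows from the definition of the $\mathrm{L}$-symbol, and then verify the recovery formula \eqref{FormSymb} by testing on a single eigenfunction. First I would start from the Schwartz kernel representation: since $A$ is $\mathrm{L}$-admissible, Remark \ref{RM:L-admissible} guarantees that $\sigma_A(x,\xi) = u_\xi^{-1}(x)\int_\Omega K_A(x,y)u_\xi(y)\,dy$ is a well-defined element of $\mathcal D'_{{\rm L}}(\Omega)\otimes\mathcal S'(\ind)$, so that $u_\xi(x)\sigma_A(x,\xi) = \int_\Omega K_A(x,y)u_\xi(y)\,dy = (Au_\xi)(x)$; this already gives \eqref{FormSymb} directly by dividing by $u_\xi(x)$ (interpreted distributionally, as in the definition of the symbol class).

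Next, I would establish the kernel expansion \eqref{EQ:KofA}. Fix $x$ and expand the distribution $y\mapsto K_A(x,y)$ (which lies in $\mathcal D'_{{\rm L}}(\Omega)$ in the $y$-variable after pairing suitably in $x$) in the biorthogonal system: writing $K_A(x,\cdot) = \sum_{\xi\in\ind} c_\xi(x)\,\overline{v_\xi(\cdot)}$ in $\mathcal D'_{{\rm L}}(\Omega)$, the biorthogonality relation \eqref{BiorthProp} forces the coefficient to be $c_\xi(x) = \int_\Omega K_A(x,y) u_\xi(y)\,dy = u_\xi(x)\sigma_A(x,\xi)$, giving exactly \eqref{EQ:KofA}. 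The convergence of this series in $\mathcal D'_{{\rm L}}(\Omega\times\Omega)$ is precisely what the $\mathrm{L}$-admissibility hypothesis was designed to guarantee (via the displayed expression in the definition of $\mathrm{L}$-admissible operators and Remark \ref{RM:L-admissible: simple case}).

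Finally, for $f\in C^\infty_{{\rm L}}(\overline\Omega)$ I would substitute the Fourier inversion formula \eqref{InvFourierTr0}, $f(y) = \sum_{\eta\in\ind}\widehat{f}(\eta)u_\eta(y)$, into \eqref{EQ:int2}:
$$
Af(x) = \sum_{\eta\in\ind}\widehat{f}(\eta)\int_\Omega K_A(x,y)u_\eta(y)\,dy = \sum_{\eta\in\ind}\widehat{f}(\eta)\,u_\eta(x)\,\sigma_A(x,\eta),
$$
using the identity $\int_\Omega K_A(x,y)u_\eta(y)\,dy = u_\eta(x)\sigma_A(x,\eta)$ from the symbol definition. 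This is \eqref{Quantization}. The interchange of the (convergent, by Proposition \ref{LEM: FTinS}, since $\widehat f\in\mathcal S(\ind)$ and the eigenfunctions grow at most polynomially) $\eta$-series with the pairing against $K_A$ is justified by the continuity of $A: C^\infty_{{\rm L}}(\overline\Omega)\to\mathcal D'_{{\rm L}}(\Omega)$ together with the rapid decay of $\widehat f$.

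The main obstacle I expect is the rigorous justification of the two interchanges of summation and distributional pairing: first in expanding $K_A(x,y)$ as a series in $y$ convergent in $\mathcal D'_{{\rm L}}(\Omega\times\Omega)$, and second in commuting the $\eta$-sum from the Fourier expansion of $f$ past the integration against $K_A$. Both rest on the $\mathrm{L}$-admissibility assumption and on the topological properties of $C^\infty_{{\rm L}}(\overline\Omega)$ and $\mathcal S(\ind)$ (Proposition \ref{LEM: FTinS}, Theorem \ref{TH: Com-nessDistr}); I would handle them by testing against an arbitrary $\psi\in C^\infty_{{\rm L}^*}(\overline\Omega)$, reducing each interchange to an absolutely convergent scalar series via the seminorm estimates \eqref{EQ: UnifBdd-s1} and the rapid decay of $\widehat f$ against the polynomial growth of the pairings $\langle u_\eta\sigma_A(\cdot,\eta),\psi\rangle$.
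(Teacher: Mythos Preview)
Your proposal is correct and follows essentially the same route as the paper: the paper's argument (given in the discussion immediately preceding the theorem) applies $A$ to the Fourier expansion $f=\sum_\xi\widehat f(\xi)u_\xi$ using continuity, identifies $Au_\xi=\int_\Omega K_A(x,y)u_\xi(y)\,dy$ with $u_\xi(x)\sigma_A(x,\xi)$ from the symbol definition, and reads off both \eqref{Quantization} and \eqref{FormSymb}. Your additional care about justifying the interchanges via testing against $\psi\in C^\infty_{{\rm L}^*}(\overline\Omega)$ is more explicit than the paper's formal treatment, but the underlying argument is the same.
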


\begin{corollary}\label{COR: SymFor2}
We have the following equivalent formulae for {\rm L}-symbols:
\begin{align*}
{\rm (i)}  \,\,\,\,\, &\sigma_{A}(x, \xi)=u_{\xi}^{-1}(x)(Au_{\xi})(x);\\
{\rm (ii)} \,\,\,\,\,
&\sigma_{A}(x,\xi)=u_{\xi}^{-1}(x)\int_{\Omega}K_{A}(x,y)u_{\xi}(y)dy.
\end{align*}
\end{corollary}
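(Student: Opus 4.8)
The goal is Corollary~\ref{COR: SymFor2}, which claims that the two formulae for the $\mathrm L$-symbol coincide.

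My plan is to observe that this corollary is essentially a restatement of what has already been assembled in Theorem~\ref{QuanOper} and its preceding discussion, so the proof amounts to reading off the two displayed identities and checking they describe the same object. Formula (ii) is nothing but Definition~\ref{$L$--Symbols} itself, so there is nothing to prove there beyond recalling the definition. Formula (i) is the identity \eqref{FormSymb} established inside Theorem~\ref{QuanOper}; but for completeness I would reprove it directly from (ii). First I would recall that $Au_\xi = \int_\Omega K_A(x,y) u_\xi(y)\,dy$, which follows by applying the Schwartz kernel representation \eqref{EQ:int1} to the test function $f = u_\xi \in C^\infty_{\mathrm L}(\overline\Omega)$. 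Then, multiplying the definition $\sigma_A(x,\xi) = u_\xi^{-1}(x)\int_\Omega K_A(x,y) u_\xi(y)\,dy$ on the right-hand side and substituting $Au_\xi$ for the integral gives exactly $\sigma_A(x,\xi) = u_\xi^{-1}(x)(Au_\xi)(x)$, which is (i).

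The one point that needs a word of care is the meaning of $u_\xi^{-1}(x)$, since in this paper we have dropped the WZ (without-zeros) condition, so $u_\xi$ may vanish somewhere in $\Omega$. Here I would point to Remark~\ref{RM:L-admissible} and the $\mathrm L$-admissibility hypothesis: by assumption the expression $u_\eta^{-1}(x)\int_\Omega K_A(x,y) u_\eta(y)\,dy$ exists as an element of $\mathcal D'_{\mathrm L}(\Omega)\otimes\mathcal S'(\ind)$ — that is precisely the content of being $\mathrm L$-admissible — so both sides of (i) and (ii) are legitimate distributional objects, and the equality is an identity in $\mathcal D'_{\mathrm L}(\Omega)\otimes\mathcal S'(\ind)$. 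There is no genuine obstacle; the only thing to be scrupulous about is not to treat $u_\xi^{-1}$ as a pointwise reciprocal but rather as shorthand for the combination appearing in Definition~\ref{$L$--Symbols}, whose well-definedness is guaranteed by $\mathrm L$-admissibility.

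In short, the proof is: recall (ii) from Definition~\ref{$L$--Symbols}; use the Schwartz kernel theorem to identify $\int_\Omega K_A(x,y)u_\xi(y)\,dy$ with $Au_\xi(x)$; substitute to obtain (i); and note that both expressions make sense in $\mathcal D'_{\mathrm L}(\Omega)\otimes\mathcal S'(\ind)$ by the $\mathrm L$-admissibility of $A$, so the two formulae are genuinely equivalent. The bulk of the real work was already done in Theorem~\ref{QuanOper}; the corollary merely packages the two equivalent expressions side by side.
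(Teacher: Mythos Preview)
Your proposal is correct and matches the paper's approach exactly: the paper gives no separate proof of this corollary, since formula~(ii) is precisely Definition~\ref{$L$--Symbols} and formula~(i) is \eqref{FormSymb} from Theorem~\ref{QuanOper}, both already justified in the discussion preceding that theorem via the Schwartz kernel identity $Au_\xi(x)=\int_\Omega K_A(x,y)u_\xi(y)\,dy$. Your added remark on interpreting $u_\xi^{-1}$ distributionally through $\mathrm L$-admissibility is a welcome clarification that the paper leaves implicit.
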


Similarly, we can introduce an analogous notion of the ${\rm L^{\ast}}$-quantization.

\begin{defn} We say that the continuous operator
$$A:C^{\infty}_{{\rm L^{\ast}}}(\overline{\Omega})\rightarrow \mathcal D'_{{\rm L^{\ast}}}(\Omega)$$
belongs to the class of ${\rm L^{\ast}}$--admissible operators if
$$
\sum\limits_{\eta\in\ind}v_{\eta}^{-1}(x)\, v_{\eta}(z)\,
\int_{\Omega}\widetilde{K}_{A}(x,y)v_{\eta}(y)dy
$$
is in $\mathcal D'_{{\rm L}^{\ast}}(\Omega\times\Omega)$.
\end{defn}

\begin{rem}
\label{RM:L*-admissible}
Similarly to Remark \ref{RM:L-admissible},
note that the expression
$$
v_{\xi}^{-1}(x)\,
\int_{\Omega}\widetilde{K}_{A}(x,y)v_{\xi}(y)dy
$$
exists for any operator $A$ from the class of ${\rm L^{\ast}}$--admissible operators. Moreover, it
is in $\mathcal D'_{{\rm L^{\ast}}}(\Omega)\otimes\mathcal S'(\ind).$
\end{rem}

We also can define the ${\rm L^{\ast}}$-symbol of an ${\rm L^{\ast}}$--admissible operator.

\medskip

\begin{defn} \label{$L$--Symbols2}
The ${\rm L^{\ast}}$-symbol of a linear continuous ${\rm L}^{*}$--admissible
operator $$A:C^{\infty}_{{\rm L^{\ast}}}(\overline{\Omega})\rightarrow
\mathcal D'_{{\rm L^{\ast}}}(\Omega)$$ is defined by
$$\tau_{A}(x, \xi):=v_{\xi}^{-1}(x)\,
\int_{\Omega}\widetilde{K}_{A}(x,y)v_{\xi}(y)dy.$$
\end{defn}

\medskip

Similarly to the case of L-symbols we have
$$
Av_{\xi}=\int_{\Omega}\widetilde{K}_{A}(x,y)v_{\xi}(y)dy,
$$
and for $f\in C^{\infty}_{{\rm L^{\ast}}}(\overline{\Omega})$ from the expantion
$$
f(x)=\sum_{\xi\in\ind}\widehat{f}_{\ast}(\xi)v_{\xi}(x)
$$
and by the operator $A:C^{\infty}_{{\rm L^{\ast}}}(\overline{\Omega})\rightarrow
\mathcal D'_{{\rm L^{\ast}}}(\Omega)$ acting on $f$, we get
$$
Af(x)=\sum_{\xi\in\ind}\widehat{f}_{\ast}(\xi)Av_{\xi}(x)=\sum_{\xi\in\ind}\widehat{f}_{\ast}(\xi)\int_{\Omega}\widetilde{K}_{A}(x,y)v_{\xi}(y)dy.
$$
Now, we have
$$
v_{\xi}(x)\tau_{A}(x, \xi):=\int_{\Omega}\widetilde{K}_{A}(x,y)v_{\xi}(y)dy,
$$
hence also the implication
\begin{equation}\label{EQ:KofA2}
\widetilde{K}_{A}(x,y)=\sum_{\xi\in\ind}v_{\xi}(x)\tau_{A}(x, \xi)\overline{u_{\xi}(y)}.
\end{equation}
We also record the resulting representation of the operator $A$ by its symbol:

\begin{theorem} \label{QuanOper2}
Let $$A:C^{\infty}_{{\rm L^{\ast}}}(\overline{\Omega})\rightarrow
\mathcal D'_{{\rm L^{\ast}}}(\Omega)$$ be a linear continuous ${\rm L}^{\ast}$--admissible
operator with ${\rm L}^{\ast}$-symbol $\tau_{A}\in\mathcal D'_{{\rm L^{\ast}}}(\Omega)\otimes\mathcal S'(\ind).$ Then the ${\rm L}^{\ast}$--quantization is given by
\begin{equation}\label{Quantization2}
Af(x)=\sum_{\xi\in\ind}
 \widehat{f}_{\ast}(\xi)\,\tau_{A}(x, \xi)\,v_{\xi}(x)
\end{equation}
for every $f\in C^{\infty}_{{\rm L}}(\overline{\Omega})$ .
The ${\rm L}^{\ast}$-symbol $\tau_{A}$ can be written as
\begin{equation}\label{FormSymb}
\tau_{A}(x,\xi)=v_{\xi}^{-1}(x)(Av_{\xi})(x).
\end{equation}
\end{theorem}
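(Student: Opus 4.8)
The plan is to mirror, on the adjoint side, the derivation already carried out for Theorem~\ref{QuanOper}, using the biorthogonality \eqref{BiorthProp} and the conjugate Fourier inversion \eqref{ConjInvFourierTr0} in place of \eqref{InvFourierTr0}. First I would note that for an ${\rm L}^*$--admissible operator $A$, the existence of the Schwartz kernel $\widetilde{K}_A\in\mathcal D'_{{\rm L}^*}(\Omega\times\Omega)$ gives, in the sense of distributions, $Av_\xi(x)=\int_\Omega \widetilde{K}_A(x,y)v_\xi(y)\,dy$, and by the definition of the ${\rm L}^*$-symbol $\tau_A$ together with Remark~\ref{RM:L*-admissible}, the quotient $v_\xi^{-1}(x)\int_\Omega \widetilde{K}_A(x,y)v_\xi(y)\,dy$ is a well-defined element of $\mathcal D'_{{\rm L}^*}(\Omega)\otimes\mathcal S'(\ind)$; this yields immediately formula \eqref{FormSymb} for $\tau_A$, namely $\tau_A(x,\xi)=v_\xi^{-1}(x)(Av_\xi)(x)$.

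Next I would establish the quantization formula \eqref{Quantization2}. For $f\in C^\infty_{{\rm L}^*}(\overline\Omega)$, Proposition~\ref{LEM: FTinS} gives the expansion $f(y)=\sum_{\xi\in\ind}\widehat{f}_*(\xi)v_\xi(y)$, converging in $C^\infty_{{\rm L}^*}(\overline\Omega)$, hence in particular in $L^2(\Omega)$ and in $\mathcal D'_{{\rm L}^*}(\Omega)$. Applying the linear continuous operator $A$ term by term — justified because $A$ is continuous from $C^\infty_{{\rm L}^*}(\overline\Omega)$ to $\mathcal D'_{{\rm L}^*}(\Omega)$ and the partial sums converge in the source space — I obtain
\begin{equation*}
Af(x)=\sum_{\xi\in\ind}\widehat{f}_*(\xi)\,Av_\xi(x)
=\sum_{\xi\in\ind}\widehat{f}_*(\xi)\,v_\xi(x)\,\tau_A(x,\xi),
\end{equation*}
which is exactly \eqref{Quantization2}. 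The identity \eqref{EQ:KofA2} for $\widetilde{K}_A$ then follows by comparing this with $Af(x)=\int_\Omega \widetilde{K}_A(x,y)f(y)\,dy$ and using biorthogonality $(v_\xi, u_\eta)_{L^2}=\int_\Omega v_\xi(y)\overline{u_\eta(y)}\,dy$ read off from \eqref{BiorthProp} — or, conversely, one can start from the claimed kernel formula \eqref{EQ:KofA2} and recover the symbol by integrating against $v_\xi$, which is the route taken in the text immediately preceding the theorem.

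The main obstacle I anticipate is the rigorous justification of the termwise application of $A$ and of the interchange of $A$ with the (distributional) integration against the kernel: one must check that the series $\sum_\xi \widehat{f}_*(\xi)v_\xi$ converges in the Fréchet space $C^\infty_{{\rm L}^*}(\overline\Omega)$ — which is precisely the content of Proposition~\ref{LEM: FTinS}, since $\widehat{f}_*=\mathcal F_{{\rm L}^*}f\in\mathcal S(\ind)$ — and that the resulting expression defines the same ${\rm L}^*$-distribution as the kernel representation \eqref{EQ:int1}. Both points are handled by the continuity of $A$ and the mapping properties established in Subsection~\ref{SEC:Schwartz}; everything else is a formal manipulation parallel to the ${\rm L}$-case, with the roles of $u_\xi$ and $v_\xi$, and of $\widehat{f}$ and $\widehat{f}_*$, interchanged.
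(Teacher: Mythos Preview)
Your proposal is correct and follows essentially the same route as the paper: the argument preceding Theorem~\ref{QuanOper2} in the text expands $f\in C^\infty_{{\rm L}^*}(\overline\Omega)$ via \eqref{ConjInvFourierTr0}, applies $A$ term by term using continuity, and then substitutes the definition $v_\xi(x)\tau_A(x,\xi)=\int_\Omega\widetilde{K}_A(x,y)v_\xi(y)\,dy$ to obtain \eqref{Quantization2} and \eqref{FormSymb}. Your write-up is in fact slightly more careful than the paper's, since you explicitly invoke Proposition~\ref{LEM: FTinS} to justify the Fr\'echet-space convergence needed for the termwise application of $A$.
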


\begin{corollary}\label{COR: SymFor}
We have the following equivalent formulae for  ${\rm L}^{\ast}$-symbols:
\begin{align*}
{\rm (i)}  \,\,\,\,\, &\tau_{A}(x, \xi)=v_{\xi}^{-1}(x)(Av_{\xi})(x);\\
{\rm (ii)} \,\,\,\,\,
&\tau_{A}(x,\xi)=v_{\xi}^{-1}(x)\int_{\Omega}\widetilde{K}_{A}(x,y)v_{\xi}(y)dy.
\end{align*}
\end{corollary}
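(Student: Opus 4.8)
The plan is to observe that this corollary is an immediate consequence of Theorem \ref{QuanOper2} together with Definition \ref{$L$--Symbols2}, and to record the short argument that links them. Formula (ii) requires essentially no proof: it is literally the defining expression for the ${\rm L}^{\ast}$-symbol $\tau_{A}$ introduced in Definition \ref{$L$--Symbols2}, and by Remark \ref{RM:L*-admissible} it is a well-defined element of $\mathcal D'_{{\rm L^{\ast}}}(\Omega)\otimes\mathcal S'(\ind)$ precisely because $A$ is assumed to be ${\rm L}^{\ast}$--admissible, so that the multiplication by $v_{\xi}^{-1}(x)$ makes sense even when the eigenfunctions $v_{\xi}$ vanish somewhere in $\Omega$.

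For formula (i), I would recall that since $A:C^{\infty}_{{\rm L^{\ast}}}(\overline{\Omega})\rightarrow \mathcal D'_{{\rm L^{\ast}}}(\Omega)$ is linear and continuous, it possesses a Schwartz kernel $\widetilde{K}_{A}\in\mathcal D'_{{\rm L^{\ast}}}(\Omega\times\Omega)$ with $Af(x)=\int_{\Omega}\widetilde{K}_{A}(x,y)f(y)\,dy$ in the sense of distributions for every $f\in C^{\infty}_{{\rm L^{\ast}}}(\overline{\Omega})$. Since $v_{\xi}\in C^{\infty}_{{\rm L^{\ast}}}(\overline{\Omega})$ for each $\xi\in\ind$, specialising $f=v_{\xi}$ yields $(Av_{\xi})(x)=\int_{\Omega}\widetilde{K}_{A}(x,y)v_{\xi}(y)\,dy$, which is exactly the expression appearing, before multiplication by $v_{\xi}^{-1}(x)$, in Definition \ref{$L$--Symbols2}. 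Multiplying through by $v_{\xi}^{-1}(x)$ gives $v_{\xi}^{-1}(x)(Av_{\xi})(x)=\tau_{A}(x,\xi)$, which is (i); comparing (i) with (ii) shows that the two describe the same object, and the proof is complete.

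There is no genuine obstacle here: the computational content has already been carried out in the preparatory discussion preceding Theorem \ref{QuanOper2}, namely the relations $v_{\xi}(x)\tau_{A}(x,\xi)=\int_{\Omega}\widetilde{K}_{A}(x,y)v_{\xi}(y)\,dy$ and \eqref{EQ:KofA2}, and the corollary simply collects the two equivalent descriptions of $\tau_{A}$ for convenient reference in the subsequent symbolic calculus. The only mild subtlety worth emphasising, and the reason for stating it separately, is the bookkeeping of the spaces: all identities are understood in $\mathcal D'_{{\rm L^{\ast}}}(\Omega)\otimes\mathcal S'(\ind)$, and the ${\rm L}^{\ast}$--admissibility hypothesis is exactly what guarantees that the formally written quantities $v_{\xi}^{-1}(x)(Av_{\xi})(x)$ and $v_{\xi}^{-1}(x)\int_{\Omega}\widetilde{K}_{A}(x,y)v_{\xi}(y)\,dy$ are legitimate distributions rather than merely formal expressions.
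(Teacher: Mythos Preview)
Your proposal is correct and matches the paper's approach: the corollary is stated in the paper without proof, being an immediate restatement of formula \eqref{FormSymb} from Theorem \ref{QuanOper2} together with Definition \ref{$L$--Symbols2}, with the connecting identity $(Av_{\xi})(x)=\int_{\Omega}\widetilde{K}_{A}(x,y)v_{\xi}(y)\,dy$ having already been recorded in the discussion preceding that theorem. Your write-up simply makes explicit the one-line justification the paper leaves implicit.
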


We now briefly describe the
notion of Fourier multipliers which is a natural name for operators with
symbols independent of $x$. In \cite{Delgado-Ruzhansky-Togmagambetov:nuclear}
the analysis of this paper is applied to investigate the spectral properties of
such operators, so we can be brief here.

\begin{defn}\label{Lfm}
Let $A:C_L^{\infty}\omp\rightarrow C_L^{\infty}\omp$
be a continuous linear operator.
We will say
 that $A$ is an $L$-Fourier multiplier if it satisfies
\[
\efel (Af)(\xi)=\sigma(\xi)\efel (f)(\xi),\; f\in C_{L}^{\infty}\omp,
\]
for some $\sigma:\ind\rightarrow\mathbb C$.
Analogously we define $L^*$-Fourier multipliers:
Let $B:C_{L^*}^{\infty}\omp\rightarrow C_{L^*}^{\infty}\omp$
be a continuous linear operator. We will say
 that $B$ is an $L^*$-Fourier multiplier if it satisfies
\[
\efela (Bf)(\xi)=\tau(\xi)\efela (f)(\xi),\, f\in C_{L^*}^{\infty}\omp,
\]
for some $\tau:\ind\rightarrow\mathbb C$.
\end{defn}

As used in \cite{Delgado-Ruzhansky-Togmagambetov:nuclear}, we have the
following simple relation between the symbols of
an operator and its adjoint.

\begin{prop}\label{admu}
The operator $A$ is an $L$-Fourier multiplier by $\sigma(\xi)$ if and only if
$A^*$ is an $L^*$-Fourier multiplier by $\overline{\sigma(\xi)}$.
\end{prop}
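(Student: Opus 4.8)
The plan is to reduce each side of the equivalence to a statement about the action of the operator on the individual basis functions, and then to pass between the two descriptions using the biorthogonality relation \eqref{BiorthProp} together with the defining property of the adjoint.

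First I would record the elementary reformulation: since $\widehat{u_\eta}(\xi)=(u_\eta,v_\xi)_{L^2}=\delta_{\eta\xi}$ by \eqref{BiorthProp}, a continuous operator $A:C_L^\infty\omp\to C_L^\infty\omp$ is an $L$-Fourier multiplier by $\sigma$ if and only if $Au_\xi=\sigma(\xi)u_\xi$ for every $\xi\in\ind$. The ``only if'' part comes from testing the defining identity of Definition \ref{Lfm} on $f=u_\xi$; the ``if'' part follows from the inversion formula \eqref{InvFourierTr0}, the fact that $\mathcal F_L:C_L^\infty\omp\to\mathcal S(\ind)$ is a homeomorphism (Proposition \ref{LEM: FTinS}) and the continuity of $A$, which together give $Af=\sum_{\xi\in\ind}\widehat f(\xi)\sigma(\xi)u_\xi$ and hence $\efel(Af)(\xi)=\sigma(\xi)\efel(f)(\xi)$. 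Exactly the same reformulation applies on the adjoint side: using $\widehat{(v_\eta)}_*(\xi)=(v_\eta,u_\xi)_{L^2}=\delta_{\eta\xi}$ and the inversion formula \eqref{ConjInvFourierTr0}, the operator $A^*$ is an $L^*$-Fourier multiplier by $\overline{\sigma}$ if and only if $A^*v_\xi=\overline{\sigma(\xi)}v_\xi$ for every $\xi\in\ind$.

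Granting this, the proof becomes a short computation. Assuming $Au_\xi=\sigma(\xi)u_\xi$, for arbitrary $\xi,\eta\in\ind$ the adjoint relation $(Af,g)_{L^2}=(f,A^*g)_{L^2}$ (the prototype being \eqref{EQ:duality}) gives
\[
(u_\xi,A^*v_\eta)_{L^2}=(Au_\xi,v_\eta)_{L^2}=\sigma(\xi)(u_\xi,v_\eta)_{L^2}=\sigma(\xi)\delta_{\xi\eta}.
\]
Since $\{v_\xi\}_{\xi\in\ind}$ is the Riesz basis biorthogonal to $\{u_\xi\}_{\xi\in\ind}$, every $g\in L^2(\Omega)$ satisfies $g=\sum_{\xi\in\ind}(g,u_\xi)_{L^2}v_\xi$; applying this to $g=A^*v_\eta$ and substituting the computation above yields $A^*v_\eta=\sum_{\xi\in\ind}\overline{\sigma(\xi)\delta_{\xi\eta}}\,v_\xi=\overline{\sigma(\eta)}v_\eta$, which by the reformulation of the previous paragraph is precisely the assertion that $A^*$ is an $L^*$-Fourier multiplier by $\overline{\sigma}$. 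The converse is obtained by running the identical argument with $A$ replaced by $A^*$, the roles of $\{u_\xi\}$ and $\{v_\xi\}$ exchanged, the relation $(A^*f,g)_{L^2}=(f,Ag)_{L^2}$ in place of the one above, and the primal expansion $g=\sum_{\xi\in\ind}(g,v_\xi)_{L^2}u_\xi$.

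The only points needing care are the mapping properties underlying the computation — that $A^*$ is defined on $C_{L^*}^\infty\omp$ and takes values in $L^2(\Omega)$ (in fact in $C_{L^*}^\infty\omp$), so that the Riesz-basis expansion of $A^*v_\eta$ is legitimate, and that $\overline{\sigma}$ grows at most polynomially so that the conclusion is meaningful within Definition \ref{Lfm}. Both are part of the standing framework recalled after \eqref{EQ:duality} and are inherited from the continuity of $A$ on $C_L^\infty\omp$; I do not expect any genuine obstacle beyond this bookkeeping, the statement being the natural dual behaviour of diagonal operators with respect to biorthogonal systems.
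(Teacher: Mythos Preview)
Your argument is correct. The paper itself does not supply a proof of this proposition, stating it only as a ``simple relation'' with a reference to \cite{Delgado-Ruzhansky-Togmagambetov:nuclear}; your route via the eigenfunction characterisation $Au_\xi=\sigma(\xi)u_\xi$ together with the adjoint identity and the biorthogonal expansion is precisely the natural argument one expects here, and the bookkeeping caveats you flag in the last paragraph are the only points requiring attention.
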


\section{Difference operators}
\label{SEC:differences}

In this section we discuss difference operators that will be instrumental in defining symbol
classes for the symbolic calculus of operators.

\medskip


Let $q_{j}\in C^{\infty}({\Omega}\times{\Omega})$, $j=1,\ldots,l$, be a given family
of smooth functions.
We will call the collection of $q_j$'s {\em {\rm L}-strongly admissible} if the following properties hold:
\begin{itemize}
\item The multiplication by $q_{j}(\cdot,\cdot)$
is a continuous linear mapping on
 $C^{\infty}_{{\rm L^*}}(\overline{\Omega}\times \overline{\Omega})$, for all $j=1,\ldots,l$;
\item $q_{j}(x,x)=0$ and $\nabla_{y}q_{j}(x,y)|_{y=x}\not=0$ for all $j=1,\ldots,l$ and all $x\in\Omega$;
\item
${\rm rank}(\nabla_{y}q_{1}(x,y), \ldots, \nabla_{y}q_{l}(x,y))|_{y=x}=n
$ for all $x\in\Omega$;
\item  the diagonal in $\Omega\times\Omega$ is the only set when all of
$q_j$'s vanish:
$$
\bigcap_{j=1}^l \left\{(x,y)\in\Omega\times\Omega: \, q_j(x,y)=0\right\}=\{(x,x):\, x\in\Omega\}.
$$
\end{itemize}

We note that the first property above implies that for every $x\in\Omega$, the multiplication by
$q_{j}(\cdot,\cdot)$ is also well-defined and extends to a continuous linear mapping on
$\mathcal D'_{{\rm L}}(\Omega\times\Omega)$. Also, the last property above contains the second one
but we chose to still give it explicitly for the clarity of the exposition.

The collection of $q_j$'s with the above properties generalises the notion of a strongly
admissible collection of functions for difference operators introduced in
\cite{Ruzhansky-Turunen-Wirth:JFAA} in the context of compact Lie groups.
We will use the multi-index notation
$$
q^{\alpha}(x,y):=q^{\alpha_1}_{1}(x,y)\cdots q^{\alpha_l}_{l}(x,y).
$$

\medskip

Analogously, the notion of an ${\rm L}^{*}$-strongly admissible collection suitable for the
conjugate problem is that of a family
$\widetilde{q}_{j}\in C^{\infty}({\Omega}\times{\Omega})$, $j=1,\ldots,l$, satisfying the properties:
\begin{itemize}
\item The multiplication by $\widetilde{q}_{j}(\cdot,\cdot)$
is a continuous linear mapping on
 $C^{\infty}_{{\rm L}}(\overline{\Omega}\times\overline{\Omega})$, for all $j=1,\ldots,l$;
\item $\widetilde{q}_{j}(x,x)=0$ and $\nabla_{y}\widetilde{q}_{j}(x,y)|_{y=x}\not=0$ for all $j=1,\ldots,l$ and all $x\in\Omega$;
\item $
{\rm rank}(\nabla_{y}\widetilde{q}_{1}(x,y), \ldots, \nabla_{y}\widetilde{q}_{l}(x,y))|_{y=x}=n
$ for all $x\in\Omega$;
\item  the diagonal in $\Omega\times\Omega$ is the only set when all of
$\widetilde{q}_j$'s vanish:
$$
\bigcap_{j=1}^l \left\{(x,y)\in\Omega\times\Omega: \, \widetilde{q}_j(x,y)=0\right\}=\{(x,x):\, x\in\Omega\}.
$$
\end{itemize}
We also write
$$
\widetilde{q}^{\alpha}(x,y):=\widetilde{q}^{\alpha_1}_{1}(x,y)\cdots
\widetilde{q}^{\alpha_l}_{l}(x,y).
$$

\medskip

For an operator $A:C^{\infty}_{{\rm L}}(\overline{\Omega})\rightarrow\mathcal D'_{{\rm L}}(\Omega)$ with Schwartz kernel $K_{A}$,
let us define $A_{q^{\alpha}}:C^{\infty}_{{\rm L}}(\overline{\Omega})\rightarrow\mathcal D'_{{\rm L}}(\Omega)$ as an operator with the kernel
$$
q^{\alpha}(x,y)K_{A}(x,y),
$$
We understand this formula in the sense of distributions, i.e.
$$
\langle q^{\alpha} K_{A},{\varphi}\rangle := \langle K_{A}, q^{\alpha} \varphi \rangle \quad
(K_{A}\in \mathcal D'_{{\rm L}}(\Omega\times\Omega),\;  \varphi\in C_{{\rm
L}^*}^{\infty}(\overline{\Omega}\times\overline{\Omega})).
$$
Then, we have
$$
A_{q^{\alpha}}f(x)=\int_{\Omega}q^{\alpha}(x,y)K_{A}(x,y)f(y)dy.
$$

Also analogously, for an operator $B:C^{\infty}_{{\rm L}^{*}}(\overline{\Omega})\rightarrow\mathcal D'_{{\rm L}^{*}}(\Omega)$ with Schwartz kernel $K_{B}$,
we define $B_{\widetilde{q}^{\alpha}}:C^{\infty}_{{\rm L}^{*}}(\overline{\Omega})\rightarrow\mathcal D'_{{\rm L}^{*}}(\Omega)$ as an operator with the kernel
$$
\widetilde{q}^{\alpha}(x,y)K_{B}(x,y).
$$
We understand this formula in the sense of distributions, i.e.
$$
\langle \widetilde{q}^{\alpha} K_{B},{\varphi}\rangle := \langle K_{B}, \widetilde{q}^{\alpha} \varphi \rangle \quad
(K_{B}\in \mathcal D'_{{\rm L}^*}(\Omega\times\Omega),\;  \varphi\in C_{{\rm
L}}^{\infty}(\overline{\Omega}\times\overline{\Omega})).
$$
Then, we get
$$
B_{\widetilde{q}^{\alpha}}f(x)=\int_{\Omega}\widetilde{q}^{\alpha}(x,y)K_{B}(x,y)f(y)dy.
$$

\begin{defn}\label{DEF: DifferenceOper}\label{DEF: DifferenceOper_2}
Let
$$
A:C^{\infty}_{{\rm L}}(\overline{\Omega})\rightarrow
\mathcal D'_{{\rm L}}(\Omega)
$$
be an ${\rm L}$--admissible operator with the symbol $a\in\mathcal D'_{{\rm L}}(\Omega)\otimes\mathcal S'(\ind)$ and with the Schwartz kernel $K_{A}\in\mathcal D'_{{\rm L}}(\Omega\times\Omega)$. Then we define the difference operator
$$
\Delta_{q}^{\alpha}:\mathcal D'_{{\rm L}}(\Omega)\otimes\mathcal S'(\ind)\rightarrow\mathcal D'_{{\rm L}}(\Omega)\otimes\mathcal S'(\ind)
$$
acting on ${\rm L}$--symbols by
\begin{align*}
\Delta_{q}^{\alpha}\,a(x,\xi) & := u_{\xi}^{-1}(x)
\int_{\Omega} K_{A_{q^{\alpha}}}(x,y)u_{\xi}(y)dy {}\\
& = u_{\xi}^{-1}(x)
\int_{\Omega} q^{\alpha}(x,y)K_{A}(x,y)u_{\xi}(y)dy,
\end{align*}
where $K_{A_{q^{\alpha}}}\in\mathcal D'_{{\rm L}}(\Omega\times\Omega)$ is the Schwartz kernel of the ${\rm L}$--admissible operator $A_{q^{\alpha}}:C^{\infty}_{{\rm L}}(\overline{\Omega})\rightarrow
\mathcal D'_{{\rm L}}(\Omega).$

\medskip

Analogously, for the ${\rm L^{\ast}}$--admissible operator
$$
B:C^{\infty}_{{\rm L^{\ast}}}(\overline{\Omega})\rightarrow
\mathcal D'_{{\rm L^{\ast}}}(\Omega)
$$
with the symbol $b\in\mathcal D'_{{\rm L^{\ast}}}(\Omega)\otimes\mathcal S'(\ind)$ and with the Schwartz kernel $\widetilde{K}_{B}\in\mathcal D'_{{\rm L^{\ast}}}(\Omega\times\Omega)$
we define the difference operator
$$
\widetilde{\Delta}_{q}^{\alpha}:\mathcal D'_{{\rm L^{\ast}}}(\Omega)\otimes\mathcal S'(\ind)\rightarrow\mathcal D'_{{\rm L^{\ast}}}(\Omega)\otimes\mathcal S'(\ind)
$$
acting on ${\rm L^{\ast}}$--symbols by
\begin{align*}
\widetilde{\Delta}_{q}^{\alpha}\,b(x,\xi) & := v_{\xi}^{-1}(x)
\int_{\Omega} \widetilde{K}_{B_{q^{\alpha}}}(x,y)v_{\xi}(y)dy {}\\
& = v_{\xi}^{-1}(x)
\int_{\Omega} \widetilde{q}^{\alpha}(x,y)\widetilde{K}_{B}(x,y)v_{\xi}(y)dy,
\end{align*}
where $K_{B_{q^{\alpha}}}\in\mathcal D'_{{\rm L^{\ast}}}(\Omega\times\Omega)$ is the Schwartz kernel of the ${\rm L^{\ast}}$--admissible operator $B_{q^{\alpha}}:C^{\infty}_{{\rm L^{\ast}}}(\overline{\Omega})\rightarrow
\mathcal D'_{{\rm L^{\ast}}}(\Omega).$
\end{defn}

\medskip

We now record the Taylor expansion formula with respect to a family of $q_j$'s,
which follows from expansions of functions $g$ and
$q^{\alpha}(e,\cdot)$ by the common Taylor series:

\begin{prop}\label{TaylorExp}
Any smooth function $g\in C^{\infty}({\Omega})$ can be
approximated by Taylor polynomial type expansions, i.e. for any $e\in\Omega$, we have
$$g(x)=\sum_{|\alpha|<
N}\frac{1}{\alpha!}D^{(\alpha)}_{x}g(x)|_{x=e}\, q^{\alpha}(e,x)+\sum_{|\alpha|=
N}\frac{1}{\alpha!}q^{\alpha}(e,x)g_{N}(x)
$$
\begin{equation}
\sim\sum_{\alpha\geq
0}\frac{1}{\alpha!}D^{(\alpha)}_{x}g(x)|_{x=e}\, q^{\alpha}(e,x)
\label{TaylorExpFormula}
\end{equation}
in a neighborhood of $e\in\Omega$, where $g_{N}\in
C^{\infty}({\Omega})$ and
$D^{(\alpha)}_{x}g(x)|_{x=e}$ can be found from the recurrent formulae:
$D^{(0,\cdots,0)}_{x}:=I$ and for $\alpha\in\mathbb N_0^l$,
$$
\mathsf
\partial^{\beta}_{x}g(x)|_{x=e}=\sum_{|\alpha|\leq|\beta|}\frac{1}{\alpha!}
\left[\mathsf
\partial^{\beta}_{x}q^{\alpha}(e,x)\right]\Big|_{x=e}D^{(\alpha)}_{x}g(x)|_{x=e},
$$
where $\beta=(\beta_1, \ldots, \beta_n)$ and
$
\partial^{\beta}_{x}=\frac{\partial^{\beta_{1}}}{\partial x_{1}^{\beta_{1}}}\cdots
\frac{\partial^{\beta_{n}}}{\partial x_{n}^{\beta_{n}}}.
$
\end{prop}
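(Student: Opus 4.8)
The plan is to reduce the statement to the ordinary multivariate Taylor formula by introducing local coordinates adapted to the family $q_1,\dots,q_l$. Fix $e\in\Omega$ and put $\ell_j:=\nabla_y q_j(x,y)\big|_{(x,y)=(e,e)}\in\mathbb R^n$; equivalently, $\ell_j$ is the differential at $x=e$ of the function $x\mapsto q_j(e,x)$, which vanishes at $x=e$ since $q_j(e,e)=0$. By the rank hypothesis in the definition of an $\rm L$-strongly admissible collection, the vectors $\ell_1,\dots,\ell_l$ span $\mathbb R^n$, so after relabelling we may assume $\ell_1,\dots,\ell_n$ are linearly independent. Then, by the inverse function theorem, the map $x\mapsto z(x):=(q_1(e,x),\dots,q_n(e,x))$ is a diffeomorphism of a neighbourhood $U$ of $e$ onto a neighbourhood of $0$ in $\mathbb R^n$ with $z(e)=0$. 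This is the only geometric input; the rest is bookkeeping on top of the classical one-point Taylor formula.

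Next I would pull $g$ back through this change of variables: writing $g(x)=\widetilde g(z(x))$ on $U$ with $\widetilde g\in C^\infty$, Taylor's formula with integral remainder gives, for every $N$,
\[
\widetilde g(z)=\sum_{|\gamma|<N}\frac{1}{\gamma!}(\partial_z^\gamma\widetilde g)(0)\,z^\gamma+\sum_{|\gamma|=N}\frac{1}{\gamma!}z^\gamma\,\widetilde g_{\gamma}(z),\qquad \widetilde g_\gamma\in C^\infty.
\]
Since $z^\gamma=q_1(e,x)^{\gamma_1}\cdots q_n(e,x)^{\gamma_n}=q^{\alpha}(e,x)$ for the multi-index $\alpha=(\gamma,0,\dots,0)\in\mathbb N_0^l$, this is precisely the asserted identity once one sets $D^{(\alpha)}_xg(x)\big|_{x=e}:=(\partial_z^\gamma\widetilde g)(0)$ for such $\alpha$ (and, say, $D^{(\alpha)}_xg(x)\big|_{x=e}:=0$ otherwise, which is only relevant when $l>n$) and collects the finitely many smooth coefficients $\widetilde g_\gamma$ — one for each $\gamma$ with $|\gamma|=N$ — into the $N$-th order remainder $g_N$. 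The asymptotic relation $g(x)\sim\sum_{\alpha\ge0}\frac{1}{\alpha!}D^{(\alpha)}_xg(x)\big|_{x=e}\,q^\alpha(e,x)$ near $e$ is then automatic, because the difference of $g$ and the partial sum over $|\alpha|<N$ equals $\sum_{|\gamma|=N}\frac{1}{\gamma!}z^\gamma\widetilde g_\gamma(z)=O(|x-e|^N)$, the functions $q_j(e,\cdot)$ vanishing at $e$.

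Finally I would derive the recurrence for the coefficients. Each $q^\alpha(e,\cdot)$ is smooth and vanishes to order at least $|\alpha|$ at $x=e$, so $\big[\partial_x^\beta q^\alpha(e,x)\big]\big|_{x=e}=0$ whenever $|\beta|<|\alpha|$. Applying $\partial_x^\beta$ to the finite Taylor identity above (with $N>|\beta|$) and evaluating at $x=e$ kills the remainder term and leaves exactly
\[
\partial_x^\beta g(x)\big|_{x=e}=\sum_{|\alpha|\le|\beta|}\frac{1}{\alpha!}\big[\partial_x^\beta q^\alpha(e,x)\big]\big|_{x=e}\,D^{(\alpha)}_xg(x)\big|_{x=e},
\]
which is the stated formula; read as a system processed in increasing order of $|\alpha|$, starting from $D^{(0)}_xg(x)|_{x=e}=g(e)$, it is block triangular, and the block indexed by $|\alpha|=|\beta|=k$ is solvable because the leading homogeneous parts $\prod_{j}(\ell_j\cdot(x-e))^{\alpha_j}$ of the $q^\alpha(e,\cdot)$ with $|\alpha|=k$ span all homogeneous polynomials of degree $k$ in $n$ variables (immediate from $\ell_1,\dots,\ell_n$ forming a basis), with a unique solution when $l=n$. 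The only place the structural hypotheses enter — and the step I expect to be the crux — is this invertibility of the leading block, i.e. the linear independence at $e$ of the differentials of $n$ of the $q_j(e,\cdot)$; this is precisely the combination of $q_j(x,x)=0$ and the rank-$n$ condition, and everything else is the classical Taylor formula and elementary linear algebra, as in the compact Lie group setting of \cite{Ruzhansky-Turunen-Wirth:JFAA}.
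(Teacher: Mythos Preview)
Your argument is correct and is a fully worked-out version of what the paper only indicates in one sentence before the statement: ``follows from expansions of functions $g$ and $q^{\alpha}(e,\cdot)$ by the common Taylor series.'' Your use of the inverse function theorem to pass to coordinates $z=(q_1(e,\cdot),\ldots,q_n(e,\cdot))$ is precisely one concrete implementation of that idea, and the derivation of the recurrence by differentiating the finite expansion at $x=e$ is exactly right; the paper gives no further detail, so there is nothing to compare beyond this.
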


Analogously, any function $g\in C^{\infty}({\Omega})$
can be approximated by Taylor polynomial type expansions corresponding to
the adjoint problem, i.e. we
have
$$g(x)=\sum_{|\alpha|<
N}\frac{1}{\alpha!}\widetilde{D}^{(\alpha)}_{x}g(x)|_{x=e}\, \widetilde{q}^{\alpha}(e,x)+\sum_{|\alpha|=
N}\frac{1}{\alpha!}\widetilde{q}^{\alpha}(e,x)g_{N}(x)
$$
\begin{equation}
\sim\sum_{\alpha\geq
0}\frac{1}{\alpha!}\widetilde{D}^{(\alpha)}_{x}g(x)|_{x=e}\, \widetilde{q}^{\alpha}(e,x)
\label{TaylorExpFormula}
\end{equation}
in a neighborhood of $e\in\Omega$, where $g_{N}\in
C^{\infty}({\Omega})$ and
$\widetilde{D}^{(\alpha)}_{x}g(x)|_{x=e}$ are found from the
recurrent formula: $\widetilde{D}^{(0,\cdots,0)}:=I$ and for
$\alpha\in\mathbb N_{0}^{l}$,
$$
\partial^{\beta}_{x}g(x)|_{x=e}=\sum_{|\alpha|\leq|\beta|}\frac{1}{\alpha!}
\left[
\partial^{k}_{x}\widetilde{q}^{\alpha}(e,x)\right]\Big|_{x=e}\widetilde{D}^{(\alpha)}_{x}g(x)|_{x=e},
$$
where $\beta=(\beta_1, \ldots, \beta_n)$, and $\partial^{\beta}$ is defined as in
Proposition \ref{TaylorExp}.

It can be seen that operators $D^{(\alpha)}$ and
$\widetilde{D}^{(\alpha)}$ are differential operators of order
$|\alpha|$. We will understand them  in distributions sense, i.e. for the  L--admissible (${\rm L}^{\ast}$--admissible)
operator $A:C^{\infty}_{{\rm L}}(\overline{\Omega})\rightarrow
\mathcal D'_{{\rm L}}(\Omega)$ ($B:C^{\infty}_{{\rm L}^{\ast}}(\overline{\Omega})\rightarrow
\mathcal D'_{{\rm L}^{\ast}}(\Omega)$) define the operator $D^{(\alpha)}A$ ($\widetilde{D}^{(\alpha)}B$) as an operator with the Schwartz kernel $D^{(\alpha)}_{x}K_{A}(x,y)$ ($\widetilde{D}^{(\alpha)}_{x}K_{B}(x,y)$). Then we can act on L--symbols (${\rm L}^{\ast}$--symbols) by $D^{(\alpha)}$ ($\widetilde{D}^{(\alpha)}$).

\section{Symbolic calculus}
\label{SEC:differences2}

Using such difference operators and derivatives $D^{(\alpha)}$ from
Proposition \ref{TaylorExp}
we can now define classes of symbols.

\begin{defn}[Symbol class $S^m_{\rho,\delta}(\overline{\Omega}\times\ind)$]\label{DEF: SymClass}
Let $m\in\mathbb R$, $0\leq\delta,\rho\leq 1$. Let
$$
A:C^{\infty}_{{\rm L}}(\overline{\Omega})\rightarrow
\mathcal D'_{{\rm L}}(\Omega)
$$
be an ${\rm L}$--admissible operator with the symbol $a\in\mathcal D'_{{\rm L}}(\Omega)\otimes\mathcal S'(\ind)$ and with the Schwartz kernel $K_{A}\in\mathcal D'_{{\rm L}}(\Omega\times\Omega)$. Then the ${\rm L}$-symbol class
$S^m_{\rho,\delta}(\overline{\Omega}\times\ind)$ consists of
such  symbols $a(x,\xi)$ which are smooth in $x$ for all
$\xi\in\ind$, and which satisfy
\begin{equation}\label{EQ:symbol-class}
  \left|\Delta_{q}^\alpha D^{(\beta)}_{x} a(x,\xi) \right|
        \leq C_{a\alpha\beta m}
                \ \langle\xi\rangle^{m-\rho|\alpha|+\delta|\beta|}
\end{equation}
for all $x\in\overline{\Omega}$, for all $\alpha,\beta\geq 0$, and for all $\xi\in\ind$.
Here we understand $D^{(\beta)}_{x} a(x,\xi)$ as the symbol of the operator $D^{(\beta)}_{x}A$, where the operators $D^{(\beta)}_{x}$ are defined in Proposition
\ref{TaylorExp}. We will often denote them simply by $D^{(\beta)}$.

The class $S^m_{1,0}(\overline{\Omega}\times\ind)$ will be often
denoted by writing simply $S^m(\overline{\Omega}\times\ind)$.
In \eqref{EQ:symbol-class}, we assume that the inequality is satisfied for $x\in\Omega$ and
it extends to the closure $\overline\Omega$.
Furthermore, we define
$$
S^{\infty}_{\rho,\delta}(\overline{\Omega}\times\ind):=\bigcup\limits_{m\in\mathbb
R}S^{m}_{\rho,\delta}(\overline{\Omega}\times\ind)
$$
and
$$
S^{-\infty}(\overline{\Omega}\times\ind):=\bigcap\limits_{m\in\mathbb
R}S^{m}(\overline{\Omega}\times\ind).
$$
When we have two L-strongly admissible collections, expressing one in terms of
the other similarly to Proposition \ref{TaylorExp} and arguing similarly to
\cite{Ruzhansky-Turunen-Wirth:JFAA}, we can convince ourselves that for $\rho>\delta$ the
definition of the symbol class does not depend on the choice of an
 L-strongly admissible collection.

Analogously, we define for the ${\rm L}^*$--admissible operator
$$
B:C^{\infty}_{{\rm L}^*}(\overline{\Omega})\rightarrow
\mathcal D'_{{\rm L}^*}(\Omega)
$$
with the symbol $b\in\mathcal D'_{{\rm L}^*}(\Omega)\otimes\mathcal S'(\ind)$ and with the Schwartz kernel $\widetilde{K}_{B}\in\mathcal D'_{{\rm L}^*}(\Omega\times\Omega)$
the ${\rm L^{\ast}}$-symbol class
$\widetilde{S}^m_{\rho,\delta}(\overline{\Omega}\times\ind)$
as the space  of such symbols $b(x,\xi)$ which are smooth in $x$ for
all $\xi\in\ind$, and which satisfy
\begin{equation*}
  \left|\widetilde{\Delta}_{(x)}^\alpha \widetilde{D}^{(\beta)} b(x,\xi) \right|
        \leq C_{a\alpha\beta m}
                \ \langle\xi\rangle^{m-\rho|\alpha|+\delta|\beta|}
\end{equation*}
for all $x\in\overline{\Omega}$, for all $\alpha,\beta\geq 0$, and for all $\xi\in\ind$. Here we understand $\widetilde{D}^{(\beta)} b(x,\xi)$ as the symbol of the operator $\widetilde{D}^{(\beta)} B$.
Similarly, we can define classes
$\widetilde{S}^{\infty}_{\rho,\delta}(\overline{\Omega}\times\ind)$
and $\widetilde{S}^{-\infty}(\overline{\Omega}\times\ind)$.
\end{defn}

If $a\in S^m_{\rho,\delta}(\overline{\Omega}\times\ind)$, it is convenient to
denote by $a(X,D)={\rm Op_L}(a)$  the corresponding ${\rm
L}$-pseudo-differential operator defined by
\begin{equation}\label{EQ: L-tor-pseudo-def}
  {\rm Op_L}(a)f(x)=a(X,D)f(x):=\sum_{\xi\in\ind} u_{\xi}(x)\ a(x,\xi)\widehat{f}(\xi).
\end{equation}
The set of operators ${\rm Op_L}(a)$ of the form
(\ref{EQ: L-tor-pseudo-def}) with $a\in
S^m_{\rho,\delta}(\overline{\Omega}\times\ind)$ will be denoted by
${\rm Op_L}(S^m_{\rho,\delta} (\overline{\Omega}\times\ind))$, or by
$\Psi^m_{\rho,\delta} (\overline{\Omega}\times\ind)$. If an
operator $A$ satisfies $A\in{\rm
Op_L}(S^m_{\rho,\delta}(\overline{\Omega}\times\ind))$, we denote
its ${\rm L}$-symbol by $\sigma_{A}=\sigma_{A}(x, \xi), \,\,
x\in\overline{\Omega}, \, \xi\in\ind$. Naturally,
$\sigma_{a(X,D)}(x,\xi)=a(x,\xi)$.

Analogously, if $a\in
\widetilde{S}^m_{\rho,\delta}(\overline{\Omega}\times\ind)$,
we denote by $a(X,D)={\rm Op_{L^*}}(a)$  the corresponding ${\rm
L^{\ast}}$-pseudo-differential operator defined by
\begin{equation}\label{EQ: L-tor-pseudo-def_2}
  {\rm Op_{L^*}}(a)f(x)=a(X,D)f(x):=\sum_{\xi\in\ind} v_{\xi}(x)\ a(x,\xi)\widehat{f}_{\ast}(\xi).
\end{equation}
The set of operators ${\rm Op_{L^*}}(a)$ of the form (\ref{EQ:
L-tor-pseudo-def_2}) with $a\in
\widetilde{S}^m_{\rho,\delta}(\overline{\Omega}\times\ind)$
will be denoted by ${\rm Op_{L^*}}(\widetilde{S}^m_{\rho,\delta}
(\overline{\Omega}\times\ind))$, or by
$\widetilde{\Psi}^m_{\rho,\delta} (\overline{\Omega}\times\ind)$.

\begin{rem}\label{REM: Topology of SymClass}
{\rm (Topology on $S^{m}_{\rho,
\delta}(\overline{\Omega}\times\ind)$ ($\widetilde{S}^{m}_{\rho,
\delta}(\overline{\Omega}\times\ind)$)).} The set $S^{m}_{\rho,
\delta}(\overline{\Omega}\times\ind)$ ($\widetilde{S}^{m}_{\rho,
\delta}(\overline{\Omega}\times\ind)$) of symbols has a natural
topology. Let us consider the functions $p_{\alpha\beta}^{l}:
S^{m}_{\rho,
\delta}(\overline{\Omega}\times\ind)\rightarrow\mathbb R$
($\widetilde{p}_{\alpha\beta}^{l}: \widetilde{S}^{m}_{\rho,
\delta}(\overline{\Omega}\times\ind)\rightarrow\mathbb R$) defined
by
$$
p_{\alpha\beta}^{l}(\sigma):={\rm
sup}\left[\frac{\left|\Delta_{(x)}^{\alpha}D^{(\beta)}\sigma(x,
\xi)\right|}{\langle\xi\rangle^{l-\rho|\alpha|+\delta|\beta|}}:\,\,
(x, \xi)\in\overline{\Omega}\times\ind\right]
$$
$$
\left(\widetilde{p}_{\alpha\beta}^{l}(\sigma):={\rm
sup}\left[\frac{\left|\widetilde{\Delta}_{(x)}^{\alpha}\widetilde{D}^{(\beta)}\sigma(x,
\xi)\right|}{\langle\xi\rangle^{l-\rho|\alpha|+\delta|\beta|}}:\,\,
(x, \xi)\in\overline{\Omega}\times\ind\right]\right).
$$
Now $\{p_{\alpha\beta}^{l}\}$
($\{\widetilde{p}_{\alpha\beta}^{l}\}$) is a countable family of seminorms,
and they define a
Fr\'echet topology on $S^{m}_{\rho,
\delta}(\overline{\Omega}\times\ind)$
($\widetilde{S}^{m}_{\rho, \delta}(\overline{\Omega}\times\mathbb
Z)$). Due to the bijective correspondence of ${\rm
Op_L}(S^{m}_{\rho, \delta}(\overline{\Omega}\times\ind))$ and
$S^{m}_{\rho, \delta}(\overline{\Omega}\times\ind)$ (${\rm
Op_{L^*}}(\widetilde{S}^{m}_{\rho,
\delta}(\overline{\Omega}\times\ind))$ and
$\widetilde{S}^{m}_{\rho, \delta}(\overline{\Omega}\times\mathbb
Z)$), this directly topologises also the set of operators. These spaces
are not normable, and the topologies have but a marginal role.
\end{rem}

The notion of a symbol can be naturally  extended to that of an amplitude.

\begin{defn}[${\rm L}$-amplitudes]\label{DEF: Amplitude}
The class $\mathcal A^m_{\rho,\delta}(\overline{\Omega})$ of
${\rm L}$-amplitudes consists of the functions
$a(x,y,\xi)$ which are smooth in $x$ and $y$ for all
$\xi\in\ind$, and $a(x,x,\xi)$ is an  L--symbol for some L--admissible operator and which satisfy
\begin{equation}
  \left|\Delta_{(x)}^\alpha \Delta_{(y)}^{\alpha'} D^{(\beta)}_{x} D^{(\gamma)}_{y}
        a(x,y,\xi) \right|
        \leq C_{a\alpha\alpha'\beta\gamma m}
   \ \langle\xi\rangle^{m-\rho(|\alpha|+|\alpha'|)+\delta(|\beta|+|\gamma|)}
\end{equation}
for all $x,y\in\overline{\Omega}$, for all $\alpha,\alpha',
\beta,\gamma\geq 0$, and for all $\xi\in\ind$. Such a
function $a$ will be also called an ${\rm L}$-amplitude
of order $m\in\mathbb R$ of type $(\rho,\delta)$. Formally we may
also define
$$
  ({\rm Op_L}(a)f)(x) := \sum_{\xi\in\ind} \int_{\Omega}
    u_{\xi}(x)\ \overline{v_{\xi}(y)}\ a(x,y,\xi)\ f(y)\ dy
$$
for $f\in C_{{\rm L}}^\infty(\overline{\Omega})$. Sometimes we
may denote ${\rm Op_L}(a)$ by $a(X,Y,D).$
We also write $\mathcal
A^{m}(\overline{\Omega}):=\mathcal A^{m}_{1,
0}(\overline{\Omega})$ as well as
$$
\mathcal
A^{-\infty}(\overline{\Omega}):=\bigcap\limits_{m\in\mathbb
R}\mathcal A^{m}(\overline{\Omega}) \,\,\,\, \hbox{and} \,\,\,\,
\mathcal
A^{\infty}_{\rho,\delta}(\overline{\Omega}):=\bigcup\limits_{m\in\mathbb
R}\mathcal A^{m}_{\rho,\delta}(\overline{\Omega}).
$$
\end{defn}

Clearly we can regard the ${\rm L}$-symbols as a special class of
${\rm L}$-amplitudes, namely the ones
independent of the middle argument.
Analogously, the class $\widetilde{\mathcal
A}^m_{\rho,\delta}(\overline{\Omega})$ of ${\rm
L^{\ast}}$-amplitudes consists of the functions
$a(x,y,\xi)$ which are smooth in $x$ and $y$ for all
$\xi\in\ind$, and $a(x,x,\xi)$ is an  ${\rm L}^{\ast}$--symbol for some ${\rm L}^{\ast}$--admissible operator and which satisfy
\begin{equation}
  \left|\widetilde{\Delta}_{(x)}^\alpha \widetilde{\Delta}_{(y)}^{\alpha'} \widetilde{D}^{(\beta)}_{x} \widetilde{D}^{(\gamma)}_{y}
        a(x,y,\xi) \right|
        \leq C_{a\alpha\beta\gamma m}
   \ \langle\xi\rangle^{m-\rho(|\alpha|+|\alpha'|)+\delta(|\beta|+|\gamma|)}
\end{equation}
for all $x,y\in\overline{\Omega}$, for all $\alpha, \alpha',
\beta,\gamma\geq 0$, and for all $\xi\in\ind$.
Formally we may also write
$$
  ({\rm Op_{L^*}}(a)f)(x) := \sum_{\xi\in\ind} \int_{\Omega}
    v_{\xi}(x)\ \overline{u_{\xi}(y)}\ a(x,y,\xi)\ f(y)\ dy
$$
for $f\in C_{{\rm L^{\ast}}}^\infty(\overline{\Omega})$.
We also write
$\widetilde{\mathcal
A}^{m}(\overline{\Omega}):=\widetilde{\mathcal A}^{m}_{1,
0}(\overline{\Omega})$ as well as
$
\widetilde{\mathcal
A}^{-\infty}(\overline{\Omega}):=\bigcap\limits_{m\in\mathbb
R}\widetilde{\mathcal A}^{m}(\overline{\Omega})$
and $\widetilde{\mathcal
A}^{\infty}_{\rho,\delta}(\overline{\Omega}):=\bigcup\limits_{m\in\mathbb
R}\widetilde{\mathcal A}^{m}_{\rho,\delta}(\overline{\Omega}).
$

\begin{defn}[Equivalence of amplitudes] \label{DEF: EquivAmplit}
We say that amplitudes $a, a'$ are $m(\rho,
\delta)$-equivalent $(m\in\mathbb R)$,
$a\stackrel{m,\rho,\delta}{\sim} a'$, if $a-a'\in\mathcal
A^{m}_{\rho,\delta}(\overline{\Omega})$; they are asymptotically
equivalent, $a\sim a'$ (or $a\stackrel{-\infty}{\sim} a'$ if we
need additional clarity), if $a-a'\in\mathcal
A^{-\infty}(\overline{\Omega})$. For the corresponding operators we also write
${\rm Op}(a)\stackrel{m,\rho,\delta}{\sim} {\rm Op}(a')$ and ${\rm
Op}(a)\sim {\rm Op}(a')$ (or ${\rm Op}(a)\stackrel{-\infty}{\sim}
{\rm Op}(a')$ if we need additional clarity), respectively. It is
obvious that $\stackrel{m,\rho,\delta}{\sim}$ and $\sim$ are
equivalence relations.
\end{defn}

From the algebraic point of view, we could handle the amplitudes,
symbols, and operators modulo the equivalence relation $\sim$,
because the ${\rm L}$-pseudo-differential operators form a
$\ast$-algebra with ${\rm
Op}(S^{-\infty}(\overline{\Omega}\times\ind))$ as a
subalgebra.

\vspace{3mm}

The next theorem is a prelude to asymptotic expansions, which are
the main tool in the symbolic analysis of ${\rm
L}$-pseudo-differential operators.

\begin{theorem}[Asymptotic sums of symbols] Let $(m_{j})_{j=0}^{\infty}\subset\mathbb
R$ be a sequence such that $m_{j}>m_{j+1}$, and
$m_{j}\rightarrow-\infty$ as $j\rightarrow\infty$, and
$\sigma_{j}\in
S^{m_{j}}_{\rho,\delta}(\overline{\Omega}\times\ind)$ for all
$j\in\ind$. Then there exists an ${\rm L}$-symbol $\sigma\in
S^{m_{0}}_{\rho,\delta}(\overline{\Omega}\times\ind)$ such that
for all $N\in\ind$,
$$
\sigma\stackrel{m_{N},\rho,\delta}{\sim}\sum_{j=0}^{N-1}\sigma_{j}.
$$
\end{theorem}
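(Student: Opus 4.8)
The plan is to follow the classical Borel-summation / symbol-completion argument used for Hörmander-type asymptotic expansions, adapted to our discrete setting on $\ind\subset\mathbb Z^K$. The construction will be: choose a sequence of cut-off factors $\chi(\langle\xi\rangle/t_j)$ with suitable thresholds $t_j\to\infty$, set $\sigma:=\sum_{j=0}^\infty \psi_j(\xi)\,\sigma_j(x,\xi)$ where $\psi_j$ vanishes for $\langle\xi\rangle$ below $t_j$ and equals $1$ above $2t_j$, and show that for an appropriate rapidly growing choice of $t_j$ the series converges in $S^{m_0}_{\rho,\delta}(\overline\Omega\times\ind)$ and the tail $\sigma-\sum_{j<N}\sigma_j$ lands in $S^{m_N}_{\rho,\delta}(\overline\Omega\times\ind)$.

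First I would fix an excision function: pick $\chi\in C^\infty([0,\infty))$ with $\chi(r)=0$ for $r\le 1$ and $\chi(r)=1$ for $r\ge 2$, and set $\psi_j(\xi):=\chi(\langle\xi\rangle/t_j)$. The key observation is that the difference operators $\Delta_q^\alpha$ act only in the $\xi$-variable and that applying $\Delta_q^\alpha D_x^{(\beta)}$ to $\psi_j\sigma_j$ produces, by a discrete Leibniz-type rule for $\Delta_q^\alpha$, a finite sum of terms each of which is a product of a difference of $\psi_j$ (controlled, since $\psi_j\equiv 0$ or $\equiv 1$ outside a dyadic shell around $t_j$ and the shifts involved in $q$ move $\xi$ by a bounded amount) with $\Delta_q^{\alpha'}D_x^{(\beta)}\sigma_j$. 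On the support of $\psi_j$ we have $\langle\xi\rangle\ge t_j$, so $\langle\xi\rangle^{m_j-m_{j+1}}$ can be made as small as we like by enlarging $t_j$; this lets me arrange, inductively in $j$, that
\[
\left|\Delta_q^\alpha D_x^{(\beta)}\bigl(\psi_j\sigma_j\bigr)(x,\xi)\right|
\le 2^{-j}\,\langle\xi\rangle^{m_{j-1}-\rho|\alpha|+\delta|\beta|}
\]
for all $|\alpha|,|\beta|\le j$ and all $x,\xi$. Summing over $j\ge 1$ then gives $\sigma\in S^{m_0}_{\rho,\delta}(\overline\Omega\times\ind)$, because for each fixed $(\alpha,\beta)$ all but finitely many terms are bounded by $2^{-j}\langle\xi\rangle^{m_0-\rho|\alpha|+\delta|\beta|}$, and convergence in each seminorm $p^{m_0}_{\alpha\beta}$ of Remark \ref{REM: Topology of SymClass} is immediate.

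For the asymptotic property, write $\sigma-\sum_{j<N}\sigma_j = \sum_{j<N}(\psi_j-1)\sigma_j + \sum_{j\ge N}\psi_j\sigma_j$. Each $(\psi_j-1)\sigma_j$ is supported in $\{\langle\xi\rangle\le 2t_j\}$, hence is compactly supported in $\xi$ and therefore lies in $S^{-\infty}(\overline\Omega\times\ind)\subset S^{m_N}_{\rho,\delta}(\overline\Omega\times\ind)$; the finite sum of these stays in $S^{m_N}_{\rho,\delta}$. For the tail $\sum_{j\ge N}\psi_j\sigma_j$, the per-term bound above with $m_{j-1}\le m_{N-1}$ is too weak, so I would instead use the sharper shell estimate: on $\operatorname{supp}\psi_j$ one has $\langle\xi\rangle\ge t_j$, so $\langle\xi\rangle^{m_j}\le \langle\xi\rangle^{m_N}\langle\xi\rangle^{m_j-m_N}\le \langle\xi\rangle^{m_N} t_j^{m_j-m_N}$ (note $m_j-m_N<0$ for $j>N$), and by choosing $t_j$ large enough that $t_j^{m_j-m_N}\le 2^{-j}$ for all $N<j$ — which is compatible with the earlier requirements since we only need $t_j$ large — each tail term is $\le 2^{-j}\langle\xi\rangle^{m_N-\rho|\alpha|+\delta|\beta|}$ up to a constant, and the tail sums to an element of $S^{m_N}_{\rho,\delta}(\overline\Omega\times\ind)$. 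This gives $\sigma\stackrel{m_N,\rho,\delta}{\sim}\sum_{j<N}\sigma_j$ for every $N$.

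The main obstacle is the bookkeeping of the Leibniz rule for the difference operators $\Delta_q^\alpha$: unlike genuine derivatives, the $\Delta_q^\alpha$ are built from multiplication by $q^\alpha(x,y)$ on the kernel side, so one must verify that $\Delta_q^\alpha(\psi_j\,\sigma_j)$ really does expand as a finite combination of $(\text{finite difference of }\psi_j)\cdot(\Delta_q^{\alpha'}\sigma_j)$ with the shifts in $\xi$ bounded, so that "$\operatorname{supp}\psi_j$" arguments survive the application of $\Delta_q^\alpha$ up to enlarging the shell by a fixed amount. Once this discrete Leibniz structure is pinned down — it is the analogue of Lemma-type statements in \cite{Ruzhansky-Turunen-Wirth:JFAA} — the two diagonal-choice conditions on $(t_j)$ (one from the convergence step, one from the tail step) are satisfied simultaneously by taking $t_j$ sufficiently large at each stage, and the rest is routine.
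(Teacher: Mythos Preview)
The paper states this theorem without proof, so there is no argument to compare against directly; your Borel-summation template with cutoffs $\psi_j(\xi)=\chi(\langle\xi\rangle/t_j)$ is the standard scheme and is presumably what the authors have in mind.

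There is, however, a real gap in the mechanism you describe. Your estimates hinge on a Leibniz-type expansion of $\Delta_q^\alpha(\psi_j\sigma_j)$ into terms ``(finite difference of $\psi_j)\cdot\Delta_q^{\alpha'}\sigma_j$'' with ``the shifts involved in $q$ mov[ing] $\xi$ by a bounded amount'', so that support-in-$\xi$ arguments survive up to ``enlarging the shell by a fixed amount''. In this paper that is not how $\Delta_q^\alpha$ behaves. By Definition~\ref{DEF: DifferenceOper}, $\Delta_q^\alpha$ acts by multiplying the Schwartz kernel by $q^\alpha(x,y)$ and then reading off the new symbol; on the symbol side this is
\[
\Delta_q^\alpha(\psi_j\sigma_j)(x,\xi)
= u_\xi^{-1}(x)\sum_{\eta\in\ind}u_\eta(x)\,\psi_j(\eta)\,\sigma_j(x,\eta)\int_\Omega q^\alpha(x,y)\,\overline{v_\eta(y)}\,u_\xi(y)\,dy,
\]
a genuinely non-local operation in the discrete variable: the coupling coefficients $\int_\Omega q^\alpha(x,y)\,\overline{v_\eta(y)}\,u_\xi(y)\,dy$ are not supported in any bounded range of $\eta$ relative to $\xi$, and there is no ``bounded shift'' structure to invoke. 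In the compact Lie group references you cite, that structure comes from the group law; there is no analogue here. Hence the conclusion that $\Delta_q^\alpha(\psi_j\sigma_j)$ is concentrated on a shell $\{\langle\xi\rangle\approx t_j\}$ does not follow, and both your convergence step and your tail estimate, as written, break down.

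The obstacle you flag at the end is therefore more than bookkeeping. What one can use instead of support is decay: since $\ind$ is discrete, $(1-\psi_j)\sigma_j$ is supported on finitely many $\xi$, so its quantisation has smooth finite-rank kernel and, via the {\rm L}-strong admissibility of the $q_j$ together with Proposition~\ref{LEM: FTinS}, lies in $S^{-\infty}$. The diagonal choice of the thresholds $t_j$ then has to be organised against rapid-decay estimates for the coupling coefficients above, not against a fixed enlargement of support. That is where the actual work in this setting lies, and your proposal does not yet address it.
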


We will now look at the formula for the symbol of the adjoint operator.
Let $A\in {\rm Op_L}
(S^m_{\rho,\delta}(\overline{\Omega}\times\ind))$. By the
definition of the adjoint operator we have
$$
(Au_{\xi}, v_{\eta})_{L^2}=(u_{\xi}, A^{*}v_{\eta})_{L^2}
$$
or
$$
\int_{\Omega}Au_{\xi}(x)\overline{v_{\eta}(x)}dx=\int_{\Omega}u_{\xi}(x)\overline{A^{\ast}v_{\eta}(x)}dx
$$
for $\xi, \eta\in\ind$.
Plugging in the integral expressions, we get
\begin{align*}
\int_{\Omega}{\left[\int_{\Omega}K_{A}(x,y)u_{\xi}(y)dy\right]}\overline{v_{\eta}(x)}dx & =
\int_{\Omega}{u_{\xi}(x)}\overline{\left[\int_{\Omega}K_{A^{\ast}}(x,y)v_{\eta}(y)dy\right]}dx \\
& = \int_{\Omega}{u_{\xi}(y)}{\left[\int_{\Omega}\overline{K_{A^{\ast}}(y,x)}
\overline{v_{\eta}(x)}dx\right]}dy
\end{align*}
for $\xi, \eta\in\ind$, where we swapped $x$ and $y$ in the last formula.
Consequently, we get the familiar property
$$
K_{A^{\ast}}(x,y)=\overline{K_{A}(y,x)}.
$$
Now, using this and the equation (\ref{EQ:KofA}), and formula (ii) in Corollary \ref{COR: SymFor2},
and then formula (ii) in Corollary \ref{COR: SymFor} and the Taylor expansion in Proposition \ref{TaylorExp},
we can write for the ${\rm L}^*$-symbol $\tau_{A^*}$ of $A^*$ that
\begin{align*}
 \tau_{A^*}(x,\xi) & =
v_{\xi}^{-1}(x) \int_\Omega K_{A^{\ast}}(x,y)v_\xi(y) dy \\
& =
v_{\xi}^{-1}(x) \int_\Omega \overline{K_{A}(y,x)} v_\xi(y) dy \\
& =
v_{\xi}^{-1}(x)\int_\Omega \sum_{\eta\in\ind}  \overline{u_\eta(y)  \sigma_A(y,\eta)} v_\eta(x)  v_\xi(y) dy \\
& \sim v_{\xi}^{-1}(x) \int_\Omega \sum_{\eta\in\ind}  \overline{u_\eta(y)}
\sum_\alpha \frac{1}{\alpha!} \overline {D_x^{(\alpha)} \sigma_A(x,\eta)
q^\alpha(x,y)} v_\eta(x)  v_\xi(y) dy
\end{align*}
as an asymptotic sum. Formally regrouping terms for each $\alpha$, we obtain
$$
 \tau_{A^*}(x,\xi) \sim
\sum_\alpha \frac{1}{\alpha!} v_{\xi}^{-1}(x)  \int_\Omega\sum_{\eta\in\ind}  \overline {u_\eta(y) q^\alpha(x,y) D_x^{(\alpha)} \sigma_A(x,\eta)}  v_\eta(x)  v_\xi(y) dy.
$$
Using the formula (\ref{EQ:KofA2}), and
taking $$\widetilde{q}(x,y):=\overline{q(x,y)}$$
we can write this as
$$
\tau_{A^*}(x,\xi) \sim \sum_\alpha \frac{1}{\alpha!}
\widetilde \Delta_{\widetilde{q}}^\alpha D_x^{(\alpha)}\overline{\sigma_A(x,\xi)}.
$$
Making rigorous estimates for the remainder in a routine way,
and assuming in the following theorem that for every $x\in\Omega$,
the multiplication by $q_{j}(x,\cdot)$ preserves both spaces
$C_{{\rm L}}^\infty(\overline{\Omega})$ and $C_{{\rm L}^{*}}^\infty(\overline{\Omega})$,
we obtained:

\begin{theorem}[Adjoint operators]
Let $0\leq\delta<\rho\leq 1$. Let $A\in {\rm Op_L}
(S^m_{\rho,\delta}(\overline{\Omega}\times\ind))$.
Assume that the conjugate symbol
class $\widetilde{S}^{m}_{\rho,\delta}(\overline{\Omega}\times\ind)$
is defined with strongly admissible
functions $\widetilde{q}_{j}(x,y):=\overline{q_{j}(x,y)}$ which are ${\rm L}^{*}$-strongly admissible.
Then the adjoint of $A$ satisfies
$A^{\ast}\in {\rm Op_{L^*}}(\widetilde{S}^{m}_{\rho,\delta}(\overline{\Omega}\times\ind))$,
with its ${\rm L}^*$-symbol
$\tau_{A^*}\in \widetilde{S}^{m}_{\rho,\delta}(\overline{\Omega}\times\ind)$
having the asymptotic expansion
$$
\tau_{A^*}(x,\xi) \sim \sum_\alpha \frac{1}{\alpha!}
\widetilde \Delta_x^\alpha D_x^{(\alpha)}\overline{\sigma_A(x,\xi)}.
$$
\end{theorem}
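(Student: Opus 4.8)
The plan is to make rigorous the formal derivation already carried out in the text preceding the statement. The skeleton is: (i) identify the kernel of $A^*$; (ii) expand it using the $\mathrm{L}$-quantization formula \eqref{EQ:KofA} for $K_A$ and the symbol formula (ii) of Corollary \ref{COR: SymFor2}; (iii) apply the Taylor expansion of Proposition \ref{TaylorExp} to the factor $\sigma_A(y,\eta)$ around $y=x$ in the variable that is being integrated against $u_\eta(y)$; (iv) recognise the resulting terms, via \eqref{EQ:KofA2} and the definition of $\widetilde\Delta^\alpha_{\widetilde q}$, as $\widetilde\Delta^\alpha_{\widetilde q} D^{(\alpha)}_x \overline{\sigma_A(x,\xi)}$; and (v) control the remainder term coming from the finite Taylor expansion, showing that after $N$ steps it lies in $\widetilde S^{m-(\rho-\delta)N}_{\rho,\delta}(\overline\Omega\times\ind)$, so that the asymptotic sum is justified in the sense of the asymptotic-sums theorem. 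The hypothesis $\delta<\rho$ is exactly what makes the orders $m-(\rho-\delta)N$ decrease to $-\infty$; the hypothesis that multiplication by $q_j(x,\cdot)$ preserves both test-function spaces is what guarantees that the difference operators $\widetilde\Delta^\alpha_{\widetilde q}$ and the kernels appearing along the way stay within the admissible distributional framework, and that $\widetilde q_j=\overline{q_j}$ is indeed an $\mathrm{L}^*$-strongly admissible collection (this last point is assumed in the statement).

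Concretely, first I would record $K_{A^*}(x,y)=\overline{K_A(y,x)}$, as established above, so that
$$
\tau_{A^*}(x,\xi)=v_\xi^{-1}(x)\int_\Omega \overline{K_A(y,x)}\,v_\xi(y)\,dy .
$$
Substituting $K_A(y,x)=\sum_{\eta\in\ind} u_\eta(y)\sigma_A(y,\eta)\overline{v_\eta(x)}$ from \eqref{EQ:KofA} (with the roles of the two spatial arguments as in $K_A(y,x)$) gives
$$
\tau_{A^*}(x,\xi)=v_\xi^{-1}(x)\int_\Omega \sum_{\eta\in\ind}\overline{u_\eta(y)\,\sigma_A(y,\eta)}\,v_\eta(x)\,v_\xi(y)\,dy .
$$
Now apply Proposition \ref{TaylorExp} to the smooth function $y\mapsto \sigma_A(y,\eta)$ at the point $e=x$: for each $N$,
$$
\sigma_A(y,\eta)=\sum_{|\alpha|<N}\frac{1}{\alpha!}D^{(\alpha)}_x\sigma_A(x,\eta)\,q^\alpha(x,y)+\sum_{|\alpha|=N}\frac{1}{\alpha!}q^\alpha(x,y)\,r_{N,\eta}(y),
$$
with $r_{N,\eta}\in C^\infty(\Omega)$. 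Conjugating and inserting, the main terms regroup, using \eqref{EQ:KofA2} and the definitions in Definition \ref{DEF: DifferenceOper_2} together with $\widetilde q=\overline q$, into
$$
\sum_{|\alpha|<N}\frac{1}{\alpha!}\,\widetilde\Delta^\alpha_{\widetilde q}D^{(\alpha)}_x\overline{\sigma_A(x,\xi)} ,
$$
which is the claimed asymptotic expansion truncated at order $N$.

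It remains to estimate the remainder $R_N(x,\xi):=v_\xi^{-1}(x)\int_\Omega\sum_{\eta}\sum_{|\alpha|=N}\frac{1}{\alpha!}\overline{u_\eta(y)\,q^\alpha(x,y)\,r_{N,\eta}(y)}\,v_\eta(x)\,v_\xi(y)\,dy$ and to verify the symbol bounds of Definition \ref{DEF: SymClass} for both the truncated sum and $R_N$. For the main terms one uses that $\sigma_A\in S^m_{\rho,\delta}$, that each $\widetilde\Delta^\alpha_{\widetilde q}$ lowers the order by $\rho|\alpha|$ (as in the definition of the symbol class, since the $\widetilde q_j$ are $\mathrm{L}^*$-strongly admissible), and that each $D^{(\alpha)}_x$ raises it by at most $\delta|\alpha|$; so the $\alpha$-term has order $m-(\rho-\delta)|\alpha|$, these orders decrease since $\rho>\delta$, and the sequence of terms satisfies the hypotheses of the asymptotic-sums theorem, which then produces a genuine symbol in $\widetilde S^{m}_{\rho,\delta}(\overline\Omega\times\ind)$. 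For $R_N$ one writes the $\eta$-sum back as a kernel using \eqref{EQ:KofA2}, so $R_N$ is the $\mathrm{L}^*$-symbol of an operator built from the amplitude $\overline{q^\alpha(x,y)\,r_{N}(x,y,\eta)}$ with $r_N$ the Taylor remainder amplitude; applying $\widetilde\Delta^\beta_{\widetilde q}\widetilde D^{(\gamma)}_x$ and differentiating under the integral, the factor $q^\alpha$ with $|\alpha|=N$ together with the vanishing of $q_j$ on the diagonal yields the gain $\langle\xi\rangle^{-\rho N}$ (up to $\delta$-losses from derivatives), so $R_N\in \widetilde S^{m-(\rho-\delta)N}_{\rho,\delta}(\overline\Omega\times\ind)$. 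I expect the genuinely delicate step to be this last one: justifying the interchange of the (distributional) sum over $\eta$, the integral over $\Omega$, and the difference/differential operators, and showing that the Taylor-remainder amplitude $r_N(x,y,\eta)$ genuinely belongs to the $\mathrm{L}$-amplitude class $\mathcal A^m_{\rho,\delta}(\overline\Omega)$ uniformly — this is where the preservation of $C^\infty_{\mathrm{L}}(\overline\Omega)$ and $C^\infty_{\mathrm{L}^*}(\overline\Omega)$ under multiplication by $q_j(x,\cdot)$ is essential, and where one argues "in a routine way" following \cite{Ruzhansky-Turunen-Wirth:JFAA}, with the Schwartz-kernel theorem of Subsection \ref{SEC:Schwartz} ensuring that all the kernel manipulations stay within $\mathcal D'_{\mathrm{L}}(\Omega\times\Omega)$ and $\mathcal D'_{\mathrm{L}^*}(\Omega\times\Omega)$.
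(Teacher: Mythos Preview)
Your proposal is correct and follows exactly the same approach as the paper: the paper's argument is precisely the formal derivation preceding the theorem statement---compute $K_{A^*}(x,y)=\overline{K_A(y,x)}$, insert the kernel expansion \eqref{EQ:KofA}, apply the Taylor expansion of Proposition \ref{TaylorExp} to $\sigma_A(y,\eta)$ at $y=x$, and recognise the resulting terms via \eqref{EQ:KofA2} and the definition of $\widetilde\Delta^\alpha_{\widetilde q}$---with the remainder estimates declared ``routine''. Your write-up is in fact more detailed than the paper's, which simply says ``Making rigorous estimates for the remainder in a routine way'' and states the theorem.
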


We now treat symbols of the amplitude operators.

\begin{theorem}[Amplitude symbols]
Let $0\leq\delta<\rho\leq 1$ and let $a\in \mathcal
A^m_{\rho,\delta}(\overline{\Omega})$
be such that ${\rm Op_L}(a)$ is $L$-admissible.
Then there exists a unique {\rm L}-symbol
$\sigma\in S^m_{\rho,\delta}(\overline{\Omega}\times\ind)$
satisfying ${\rm Op_L}(a)={\rm Op_L}(\sigma)$, where
\begin{equation}
  \sigma(x,\xi) \sim \sum_{\alpha\geq 0 } \frac{1}{\alpha!}
        \ \Delta_{(x)}^{\alpha}
        \ D_y^{(\alpha)} a(x,y,\xi)|_{y=x}.
\end{equation}
\end{theorem}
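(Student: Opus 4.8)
The plan is to take $\sigma$ to be nothing other than the $L$-symbol of the operator $A:={\rm Op_L}(a)$, which is assumed to be $L$-admissible. Existence and uniqueness of an $L$-symbol $\sigma$ with ${\rm Op_L}(a)={\rm Op_L}(\sigma)$ are then immediate from the $L$-quantization Theorem \ref{QuanOper}: admissibility gives a symbol $\sigma(x,\xi)=u_\xi^{-1}(x)(Au_\xi)(x)$, quantization gives ${\rm Op_L}(\sigma)=A$, and the same formula shows that any symbol producing $A$ must equal $\sigma$. The entire content is therefore to show that this $\sigma$ lies in $S^m_{\rho,\delta}(\overline{\Omega}\times\ind)$ and has the asserted asymptotic expansion; this runs parallel to the amplitude-symbol theorem for compact Lie groups in \cite{Ruzhansky-Turunen-Wirth:JFAA}, with the biorthogonal pair $(u_\xi,v_\xi)$ playing the role of matrix coefficients.

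First I would read off the Schwartz kernel of $A$ from the defining formula for amplitude operators, namely $K_A(x,y)=\sum_{\xi\in\ind}u_\xi(x)\,\overline{v_\xi(y)}\,a(x,y,\xi)$, and combine it with formula (ii) of Corollary \ref{COR: SymFor2} to get $\sigma(x,\mu)=u_\mu^{-1}(x)\sum_{\xi\in\ind}u_\xi(x)\int_\Omega \overline{v_\xi(y)}\,a(x,y,\xi)\,u_\mu(y)\,dy$. Next I would apply the Taylor expansion of Proposition \ref{TaylorExp} to the function $y\mapsto a(x,y,\xi)$ about the base point $y=x$, writing $a(x,y,\xi)=\sum_{|\alpha|<N}\frac1{\alpha!}D_y^{(\alpha)}a(x,y,\xi)|_{y=x}\,q^\alpha(x,y)+\sum_{|\alpha|=N}\frac1{\alpha!}q^\alpha(x,y)a_N(x,y,\xi)$ with $a_N$ smooth. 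Substituting this into the formula for $\sigma$ and regrouping term by term in $\alpha$, I would recognise, by comparing with Definition \ref{DEF: DifferenceOper} of $\Delta_q^\alpha$ applied to the $L$-symbol $b_\alpha(x,\xi):=D_y^{(\alpha)}a(x,y,\xi)|_{y=x}$ (which is an $L$-symbol of an $L$-admissible operator, by the amplitude class definition), that the $\alpha$-term is exactly $\frac1{\alpha!}\Delta_q^\alpha D_y^{(\alpha)}a(x,y,\mu)|_{y=x}$. This produces the formal series in the statement. That it is a genuine asymptotic series follows once one checks that each summand lies in $S^{m-(\rho-\delta)|\alpha|}_{\rho,\delta}(\overline{\Omega}\times\ind)$, which the amplitude estimates give since $D_y^{(\alpha)}$ costs $\delta|\alpha|$ orders while $\Delta_q^\alpha$ gains $\rho|\alpha|$ orders, and $\rho>\delta$ forces the orders to $-\infty$; the Asymptotic sums of symbols theorem then yields a symbol in $S^m_{\rho,\delta}$ carrying this expansion.

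The main obstacle is the rigorous remainder estimate: one must show that the tail contribution, i.e.\ the $L$-symbol of the operator with amplitude $\sum_{|\alpha|=N}\frac1{\alpha!}q^\alpha(x,y)a_N(x,y,\xi)$, belongs to $S^{m-(\rho-\delta)N}_{\rho,\delta}(\overline{\Omega}\times\ind)$ together with all its $\Delta_q^\beta D_x^{(\gamma)}$ derivatives, uniformly in $x\in\overline{\Omega}$. This requires justifying the interchange of the $\xi$-summation with the $y$-integration and with the $x$-differentiations, using the rapid decay encoded in $\mathcal S(\ind)$ and the summability in Assumption \ref{Assumption_4}; handling the factors $u_\mu^{-1}(x)$ in the distributional sense in which symbols are defined (so that the absence of the WZ condition causes no trouble); and using that for each fixed $x$ the multiplication by $q_j(x,\cdot)$ preserves $C_{\rm L}^\infty(\overline{\Omega})$ and $C_{{\rm L}^*}^\infty(\overline{\Omega})$, so that $\Delta_q^\alpha$ acts on the symbols in question. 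Once this estimate is in place, subtracting $N$ terms shows $\sigma-\sum_{|\alpha|<N}\frac1{\alpha!}\Delta_q^\alpha D_y^{(\alpha)}a(x,y,\xi)|_{y=x}\in S^{m-(\rho-\delta)N}_{\rho,\delta}$ for every $N$; since the leading ($\alpha=0$) term is $a(x,x,\xi)\in S^m_{\rho,\delta}$, this simultaneously proves $\sigma\in S^m_{\rho,\delta}(\overline{\Omega}\times\ind)$ and establishes the asymptotic expansion, completing the proof.
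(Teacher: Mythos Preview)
Your proposal is correct and follows essentially the same route as the paper: compute $\sigma(x,\xi)=u_\xi^{-1}(x)({\rm Op_L}(a)u_\xi)(x)$ via the amplitude kernel, Taylor-expand $a(x,\cdot,\eta)$ about $y=x$ using Proposition~\ref{TaylorExp}, and recognise the resulting terms as $\frac{1}{\alpha!}\Delta_{(x)}^\alpha D_y^{(\alpha)}a(x,y,\xi)|_{y=x}$ via Definition~\ref{DEF: DifferenceOper}. The paper's proof is more terse and simply omits the remainder verification as routine, whereas you spell out in more detail what that verification entails.
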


\begin{proof} As a linear operator on $C_{{\rm
L}}^{\infty}(\overline{\Omega})$, the operator ${\rm Op_L}(a)$ possesses the
unique L-symbol $\sigma=\sigma_{{\rm Op_L}(a)}$, but at the moment we
do not yet know whether $\sigma\in
S^m_{\rho,\delta}(\overline{\Omega}\times\ind)$. By Theorem
\ref{QuanOper}  the L-symbol is computed from
$$
\sigma(x,\xi)=u_{\xi}^{-1}(x)({\rm Op_L}(a)u_{\xi})(x)
=u_{\xi}^{-1}(x)\sum_{\eta\in\ind} \int_{\Omega}
    u_{\eta}(x)\ \overline{v_{\eta}(y)}\ a(x,y,\eta)\ u_{\xi}(y) dy.
$$
Now we approximate
the function $a(x,\cdot,\eta)\in C^{\infty}(\Omega)$ by Taylor polynomial type
expansions, by using Proposition \ref{TaylorExp}, we have
\begin{align*}
\sigma(x,\xi)&\sim u_{\xi}^{-1}(x) \sum_{\alpha\geq 0}\frac{1}{\alpha!}\sum_{\eta\in\ind} \int_{\Omega} u_{\eta}(x)\ \overline{v_{\eta}(y)} q^{\alpha}(x,y) \big[
D^{(\alpha)}_{y}a(x,y,\eta)\big]_{y=x}\
u_{\xi}(y) dy
\\
&\sim \sum_{\alpha\geq 0} \frac{1}{\alpha!}
        \ \Delta_{(x)}^\alpha
        \ D_y^{(\alpha)} a(x,y,\xi)|_{y=x},
\end{align*}
Omitting a routine verification of the properties of the remainder,
this yields the statement.
\end{proof}

We now formulate the composition formula.

\begin{theorem}\label{Composition}
Let $m_{1}, m_{2}\in\mathbb R$ and $\rho>\delta\geq0$. Let $A,
B:C_{{\rm L}}^{\infty}(\overline{\Omega})\rightarrow C_{{\rm
L}}^{\infty}(\overline{\Omega})$ be linear continuous and L--admissible operators, and assume that
their {\rm L}-symbols satisfy
\begin{align*}
|\Delta_{(x)}^{\alpha}\sigma_{A}(x,\xi)|&\leq
C_{\alpha}\langle\xi\rangle^{m_{1}-\rho|\alpha|},\\
|D^{(\beta)}\sigma_{B}(x,\xi)|&\leq
C_{\beta}\langle\xi\rangle^{m_{2}+\delta|\beta|},
\end{align*}
for all $\alpha,\beta\geq 0$, uniformly in $x\in\overline{\Omega}$ and
$\xi\in\ind$.
Then
\begin{equation}
\sigma_{AB}(x,\xi)\sim\sum_{\alpha\geq 0}
\frac{1}{\alpha!}(\Delta_{(x)}^{\alpha}\sigma_{A}(x,\xi))D^{(\alpha)}\sigma_{B}(x,\xi),
\label{CompositionForm}
\end{equation}
where the asymptotic expansion means that for every $N\in\mathbb N$ we have
$$
|\sigma_{AB}(x,\xi)-\sum_{|\alpha|<N}\frac{1}{\alpha!}(\Delta_{(x)}^{\alpha}\sigma_{A}(x,\xi))D^{(\alpha)}\sigma_{B}(x,\xi)|\leq
C_{N}\langle\xi\rangle^{m_{1}+m_{2}-(\rho-\delta)N}.
$$
\end{theorem}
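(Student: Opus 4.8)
The plan is to derive the composition formula by computing the symbol of $AB$ directly from the quantization formula and then Taylor-expanding. By Theorem \ref{QuanOper} the $\mathrm{L}$-symbol of $AB$ is computed via $\sigma_{AB}(x,\xi)=u_\xi^{-1}(x)(ABu_\xi)(x)$. First I would write $Bu_\xi(x)=\sigma_B(x,\xi)u_\xi(x)$, and then apply $A$ to this product. The key point is that $A$ acts on a function, so I expand $\sigma_B(\cdot,\xi)u_\xi(\cdot)$ in the $\mathrm{L}$-Fourier series: $\sigma_B(y,\xi)u_\xi(y)=\sum_{\eta}\widehat{(\sigma_B(\cdot,\xi)u_\xi)}(\eta)\,u_\eta(y)$. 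Applying $A$ and using $Au_\eta(x)=\sigma_A(x,\eta)u_\eta(x)$ gives
$$
ABu_\xi(x)=\sum_{\eta\in\ind}\widehat{(\sigma_B(\cdot,\xi)u_\xi)}(\eta)\,\sigma_A(x,\eta)\,u_\eta(x).
$$
Now I reinterpret this: the sum $\sum_\eta \widehat{g}(\eta)\sigma_A(x,\eta)u_\eta(x)$ with $g=\sigma_B(\cdot,\xi)u_\xi$ is exactly $(\mathrm{Op_L}(\sigma_A)g)(x)$, so $ABu_\xi(x)=\mathrm{Op_L}(\sigma_A)\bigl[\sigma_B(\cdot,\xi)u_\xi\bigr](x)$. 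The next step is to insert the Schwartz kernel representation $K_A(x,y)=\sum_\zeta u_\zeta(x)\sigma_A(x,\zeta)\overline{v_\zeta(y)}$ from \eqref{EQ:KofA}, giving
$$
\sigma_{AB}(x,\xi)=u_\xi^{-1}(x)\int_\Omega K_A(x,y)\,\sigma_B(y,\xi)\,u_\xi(y)\,dy.
$$

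The heart of the argument is then the Taylor expansion. I would freeze the first slot of $\sigma_B$ and expand $\sigma_B(y,\xi)$ around $y=x$ using Proposition \ref{TaylorExp} with respect to the $q_j$-family: $\sigma_B(y,\xi)\sim\sum_{\alpha\geq 0}\frac{1}{\alpha!}D_y^{(\alpha)}\sigma_B(y,\xi)|_{y=x}\,q^\alpha(x,y)$. Substituting into the integral and recognizing from Definition \ref{DEF: DifferenceOper} that
$$
u_\xi^{-1}(x)\int_\Omega q^\alpha(x,y)K_A(x,y)u_\xi(y)\,dy=\Delta_q^\alpha\sigma_A(x,\xi),
$$
I obtain the formal expansion
$$
\sigma_{AB}(x,\xi)\sim\sum_{\alpha\geq 0}\frac{1}{\alpha!}\bigl(\Delta_{(x)}^\alpha\sigma_A(x,\xi)\bigr)D^{(\alpha)}\sigma_B(x,\xi),
$$
which is exactly \eqref{CompositionForm}.

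The main obstacle — and where the bulk of the work lies — is making the asymptotic expansion rigorous: controlling the remainder term. For each $N$, Taylor's formula gives $\sigma_B(y,\xi)=\sum_{|\alpha|<N}\frac{1}{\alpha!}D_y^{(\alpha)}\sigma_B(x,\xi)q^\alpha(x,y)+\sum_{|\alpha|=N}\frac{1}{\alpha!}q^\alpha(x,y)R_{N,\alpha}(x,y,\xi)$ with $R_{N,\alpha}$ smooth and satisfying the amplitude-type estimates with the gain $\langle\xi\rangle^{m_2+\delta N}$ inherited from the hypothesis $|D^{(\beta)}\sigma_B|\leq C_\beta\langle\xi\rangle^{m_2+\delta|\beta|}$. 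The leftover terms give an amplitude operator whose symbol must be estimated: one shows that the operator with kernel $u_\xi^{-1}(x)q^\alpha(x,y)K_A(x,y)$ applied against $R_{N,\alpha}$ produces a contribution of order $\langle\xi\rangle^{m_1-\rho N}\cdot\langle\xi\rangle^{m_2+\delta N}=\langle\xi\rangle^{m_1+m_2-(\rho-\delta)N}$, using the hypothesis $|\Delta_{(x)}^\alpha\sigma_A|\leq C_\alpha\langle\xi\rangle^{m_1-\rho|\alpha|}$ together with the fact that each $q^\alpha$ with $|\alpha|=N$ contributes $N$ difference operators and hence the decay $\langle\xi\rangle^{-\rho N}$. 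Here the condition $\rho>\delta$ is essential so that the remainder orders decrease to $-\infty$. I would handle this by the same routine kernel/Fourier-coefficient estimates used in the periodic and compact Lie group settings (cf.\ \cite{Ruzhansky-Turunen-JFAA-torus, Ruzhansky-Turunen-Wirth:JFAA}), invoking Lemma \ref{LEM: FTl2} and the $\langle\xi\rangle$-weighted bounds from the symbol-class definition; the required smoothness of intermediate products is guaranteed by the $\mathrm{L}$-strong admissibility of the $q_j$'s, which ensures multiplication by $q^\alpha$ preserves the relevant test-function spaces.
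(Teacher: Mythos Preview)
Your proposal is correct and follows essentially the same route as the paper: arrive at the integral representation $\sigma_{AB}(x,\xi)=u_\xi^{-1}(x)\int_\Omega K_A(x,y)\,\sigma_B(y,\xi)\,u_\xi(y)\,dy$, Taylor-expand $\sigma_B(\cdot,\xi)$ via Proposition~\ref{TaylorExp}, and recognise the resulting integrals as $\Delta_{(x)}^\alpha\sigma_A$ from Definition~\ref{DEF: DifferenceOper}. Your initial Fourier-series detour is unnecessary (the paper goes directly through the Schwartz kernels of $A$ and $B$), and you actually spell out the remainder estimate in more detail than the paper, which simply declares it routine and omits it.
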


\begin{proof}
First, by the Schwartz kernel theorem from Subsection \ref{SEC:Schwartz}, we have
\begin{align*}
ABf(x)&=\int_{\Omega}K_{A}(x,y) (Bf)(y)dy
\\
&=\int_{\Omega}K_{A}(x,y)\Big[\int_{\Omega}K_{B}(y,z)f(z)dz\Big]dy
\\
&=\int_{\Omega}\int_{\Omega}K_{A}(x,y) K_{B}(y,z)f(z)dzdy.
\end{align*}
Hence
\begin{align*}
\sigma_{AB}(x,\xi)&=u_{\xi}^{-1}(x)(A(Bu_{\xi}))(x) \\
&=u_{\xi}^{-1}(x)\int_{\Omega}K_{A}(x,y)\Big[\int_{\Omega} K_{B}(y,z)u_{\xi}(z)dz\Big]dy
\\
&=u_{\xi}^{-1}(x)\int_{\Omega}K_{A}(x,y)u_{\xi}(y)\sigma_{B}(y,\xi)dy.
\end{align*}
Now we approximate the function $\sigma_{B}(\cdot,\xi)\in C_{{\rm
L}}^{\infty}(\overline{\Omega})$ by Taylor polynomial type
expansions. By using Proposition \ref{TaylorExp}, we get
\begin{align*}
\sigma_{AB}(x,\xi)&\sim u_{\xi}^{-1}(x)\int_{\Omega}K_{A}(x,y) \Big[\sum_{\alpha\geq 0}
\frac{1}{\alpha!}q^{\alpha}(x,y)D^{(\alpha)}_{x}\sigma_{B}(x,\xi)\Big]u_{\xi}(z)dy
\\
&=\sum_{\alpha\geq 0}
\frac{1}{\alpha!} \Big[ u_{\xi}^{-1}(x)\int_{\Omega} q^{\alpha}(x,y) K_{A}(x,y) u_{\xi}(y) dy \Big]D^{(\alpha)}_{x}\sigma_{B}(x,\xi)
\end{align*}
Using Definition \ref{DEF: DifferenceOper}, we have
\begin{align*}
\sigma_{AB}(x,\xi)\sim \sum_{\alpha\geq 0}
\frac{1}{\alpha!}(\Delta_{(x)}^{\alpha}\sigma_{A}(x,\xi))D^{(\alpha)}_{x}\sigma_{B}(x,\xi).
\end{align*}
Omitting a routine treatment of the remainder, this completes the proof.
\end{proof}

\section{On further results}

\subsection{Properties of integral kernels} \label{SEC:kernels}

We now establish some properties of Schwartz kernels of
pseudo-differential operators with symbols in the introduced
H\"ormander-type classes. In the following Theorem \ref{TH:
KernelofPDO}, let us make the  assumption on the growth of
$L^\infty$-norms of the eigenfunctions $u_\xi$. Finding estimates
for the norms $\|u_{\xi}\|_{L^{\infty}}$ in terms of the
corresponding eigenvalues of L is a challenging problem even for
self-adjoint operators L, see e.g. Sogge and Zelditch
\cite{Sogge-Zelditch:max-ef-growth-Duke} and references therein.
Thus, on tori or, more generally, on compact Lie groups, the
eigenfunctions of the Laplacian can be chosen to be uniformly
bounded. However, even for the Laplacian, on more general
manifolds, such growth depends on the geometry of the manifold. We
refer to \cite[Remark 8.9]{Delgado-Ruzhansky:invariant} for a more
thorough discussion of this topic as well as for a list of
relevant references.

\begin{theorem}[Kernel of a pseudo--differential operator] \label{TH: KernelofPDO}
Let $\mu_0$ be a constant such that there is $C>0$ such that for
all $\xi\in\ind$ we have
$$
\|u_{\xi}\|_{L^{\infty}}\leq C \langle\xi\rangle^{\mu_0}.
$$
Let $a\in S^{\mu}_{\rho,\delta}(\overline{\Omega}\times\ind)$,
$\rho>0$. Then the kernel $K(x,y)$ of the pseudo-differential
operator ${\rm Op_L}a$ satisfies
\begin{equation}\label{EQ:ests-L0}
({\rm L}^{\ast}_{y})^{k}(q^{\alpha}(x,y)K(x,y))\in L^\infty,
\end{equation}
for all
$|\alpha|>(\mu+mk+2\mu_0+s_0)/\rho$ and $x\neq y$, where $m$ is the
order from \eqref{EQ:angle} and $s_0$ is the
constant from Assumption \ref{Assumption_4}.
If ${\rm L}$ is a differential operator it follows that
\begin{equation}\label{EQ:ests-L}
|({\rm L}^{\ast}_{y})^{k}K(x,y)|\leq C_{Nk}|x-y|^{-N}
\end{equation}
for any $N>(\mu+mk+2\mu_0+s_0)/\rho$ and $x\neq y$.
\end{theorem}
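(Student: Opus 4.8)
The plan is to start from the symbol representation of the kernel. By formula \eqref{EQ:KofA} we have
$$
K(x,y)=\sum_{\xi\in\ind} u_{\xi}(x)\, a(x,\xi)\, \overline{v_{\xi}(y)},
$$
so multiplying by $q^{\alpha}(x,y)$ and applying $({\rm L}^{*}_{y})^{k}$ to the $y$-variable gives, formally,
$$
({\rm L}^{*}_{y})^{k}\bigl(q^{\alpha}(x,y)K(x,y)\bigr)
=\sum_{\xi\in\ind} u_{\xi}(x)\,\langle\xi\rangle^{?}\, (\Delta_{q}^{\alpha}a)(x,\xi)\,\overline{({\rm L}^{*})^{k} v_{\xi}(y)}
$$
up to lower order terms — the point being that pairing $q^{\alpha}(x,y)$ against the kernel is exactly what the difference operator $\Delta_{q}^{\alpha}$ computes (Definition \ref{DEF: DifferenceOper}), and that $({\rm L}^{*})^{k} v_{\xi}=\overline{\lambda_{\xi}}^{k} v_{\xi}$ so each factor of ${\rm L}^{*}_{y}$ contributes a factor $|\lambda_{\xi}|\lesssim\langle\xi\rangle^{m}$. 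First I would make this precise: commute $({\rm L}^{*}_{y})^{k}$ past the multiplication by $q^{\alpha}$ using the Leibniz rule, noting that since $q^{\alpha}$ vanishes to order $|\alpha|$ on the diagonal and ${\rm L}$ is (in the second part) a differential operator of order $m$, the commutator terms produce lower powers of $q$ but are still controlled by applying more difference operators; all the resulting pieces are of the form $u_{\xi}(x)\,\overline{v_{\xi}(y)}$ times a symbol in $S^{\mu+mk}_{\rho,\delta}$ acted on by $\Delta_{q}^{\beta}$ with $|\beta|\ge|\alpha|-(\text{something bounded})$, hence bounded by $C\langle\xi\rangle^{\mu+mk-\rho|\alpha|+(\text{bdd})}$.

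Next I would estimate the sum term by term in $L^{\infty}$. Using the hypothesis $\|u_{\xi}\|_{L^{\infty}}\le C\langle\xi\rangle^{\mu_0}$ and, for the $v_{\xi}$ factor, the fact that $\{v_{\xi}\}$ is also a Riesz basis so that the same polynomial bound $\|v_{\xi}\|_{L^{\infty}}\le C\langle\xi\rangle^{\mu_0}$ can be arranged (or else absorbing the $v_{\xi}$ growth into an extra $\mu_0$), the general summand is bounded by
$$
C\,\langle\xi\rangle^{\mu_0}\cdot\langle\xi\rangle^{\mu_0}\cdot\langle\xi\rangle^{\mu+mk-\rho|\alpha|}
=C\,\langle\xi\rangle^{\mu+mk+2\mu_0-\rho|\alpha|}.
$$
For the series to converge absolutely and uniformly I invoke Assumption \ref{Assumption_4}: $\sum_{\xi}\langle\xi\rangle^{-s_0}<\infty$. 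Thus convergence holds as soon as
$$
\mu+mk+2\mu_0-\rho|\alpha|\le -s_0,
\quad\text{i.e.}\quad |\alpha|>\frac{\mu+mk+2\mu_0+s_0}{\rho},
$$
which is exactly the stated threshold. This gives \eqref{EQ:ests-L0}. I should also check the commutator/Leibniz corrections do not worsen the exponent — since each extra differentiation hitting $q^{\alpha}$ only lowers $|\alpha|$ by $1$ while removing one factor ${\rm L}^{*}_{y}$ (lowering $mk$ by $m$) or at worst is a bounded perturbation, the dominant term is the one written above, so the threshold is unaffected up to renaming constants; this is the routine bookkeeping I would suppress.

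For the pointwise decay \eqref{EQ:ests-L}, I use that when ${\rm L}$ is a differential operator one may choose the $q_j$'s to be (or to dominate) the coordinate differences, in particular one can take a strongly admissible family with $q(x,y)$ comparable to $|x-y|$ near the diagonal and nonvanishing away from it; more simply, the last bullet in the definition of a strongly admissible collection guarantees $\sum_{j}|q_j(x,y)|>0$ for $x\ne y$, and polynomial combinations $q^{\alpha}$ for $|\alpha|=N$ can be arranged to control $|x-y|^{N}$ from below on compact pieces. Then \eqref{EQ:ests-L0} with $|\alpha|=N$ rearranges to
$$
|({\rm L}^{*}_{y})^{k}K(x,y)|\le C_{Nk}\,|x-y|^{-N}
$$
for $x\ne y$ and $N>(\mu+mk+2\mu_0+s_0)/\rho$, as claimed. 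The main obstacle I anticipate is not the convergence estimate, which is a direct geometric-series-against-$\langle\xi\rangle^{-s_0}$ argument, but the rigorous justification that the formal term-by-term application of $({\rm L}^{*}_{y})^{k}$ and the multiplication by $q^{\alpha}$ commute with the (only conditionally convergent) eigenfunction expansion of $K$; this I would handle by first proving the identity for $a\in S^{-\infty}$ (where everything converges in $C^{\infty}_{{\rm L}}(\overline\Omega)\,\bar\otimes\,C^{\infty}_{{\rm L}^{*}}(\overline\Omega)$), and then passing to general $a$ by the density of smoothing symbols together with the uniform bounds just derived, exactly as in the classical and compact-group cases referenced in the paper.
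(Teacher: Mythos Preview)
Your termwise estimate has a genuine gap: you need $\|v_\xi\|_{L^\infty}\le C\langle\xi\rangle^{\mu_0}$, but the theorem only assumes this for $u_\xi$. Being a Riesz basis gives no $L^\infty$ control whatsoever, and ``absorbing the $v_\xi$ growth into an extra $\mu_0$'' would change the stated threshold, so neither of your escape routes works. The paper avoids $v_\xi$ altogether by arguing on the Fourier side rather than on the series: from $u_\xi(x)\,\Delta_q^\alpha a(x,\xi)=\int_\Omega q^\alpha(x,y)K(x,y)\,u_\xi(y)\,dy$ one multiplies by $\lambda_\xi^k$, writes $\lambda_\xi^k u_\xi={\rm L}^k u_\xi$, and passes ${\rm L}^k$ to the other factor by duality to obtain
\[
u_\xi(x)\,\lambda_\xi^k\,\Delta_q^\alpha a(x,\xi)=\int_\Omega ({\rm L}^{*}_y)^k\bigl(q^\alpha(x,y)K(x,y)\bigr)\,u_\xi(y)\,dy,
\]
so that $({\rm L}^{*}_y)^k(q^\alpha K)(x,\cdot)$ is the inverse ${\rm L}$-Fourier transform of $\xi\mapsto u_\xi(x)\,\lambda_\xi^k\,\Delta_q^\alpha a(x,\xi)$. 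One then applies the Hausdorff--Young inequality (Theorem~\ref{TH: HY}) with $p=1$: the norm $\|\cdot\|_{l^1({\rm L})}$ is $\sum_\xi |\cdot|\,\|u_\xi\|_{L^\infty}$, so the bound picks up $\|u_\xi\|_{L^\infty}^2$ (one factor from $|u_\xi(x)|$, one from the $l^1({\rm L})$ weight) and never touches $v_\xi$. That is exactly where the $2\mu_0$ in the threshold comes from.

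A secondary point: your Leibniz/commutator bookkeeping is unnecessary. Once you recognise that $q^\alpha K$ is itself the Schwartz kernel of the operator with symbol $\Delta_q^\alpha a$ (this is the \emph{definition} of $\Delta_q^\alpha$), there is nothing to commute: $({\rm L}^{*}_y)^k$ is applied to $q^\alpha K$ as a whole, and in the paper's integral formulation it simply becomes the factor $\lambda_\xi^k$. The Leibniz expansion only enters at the very end, for the second assertion, to pass from $({\rm L}^{*}_y)^k(q^\alpha K)\in L^\infty$ to $q^\alpha\,({\rm L}^{*}_y)^k K\in L^\infty$ when ${\rm L}$ is differential.
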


\begin{proof}
By Corollary \ref{COR: SymFor2} we have
$$
u_{\xi}(x)a(x,\xi)= \int_{\Omega}K(x,y)u_{\xi}(y)dy,
$$
and from Definition \ref{DEF: DifferenceOper}  we get
\begin{align*}
u_{\xi}(x)\Delta_{(x)}^{\alpha}a(x,\xi)&=\int_{\Omega}q^{\alpha}(x,y)K(x,y)u_{\xi}(y)dy,
\end{align*}
and also
\begin{multline*}
u_\xi(x)\lambda_\xi^k \Delta_{(x)}^{\alpha} a(x,\xi)=\int_{\Omega}q^{\alpha}(x,y)K(x,y)\lambda_{\xi}^{k}u_{\xi}(y)dy
\\ =
\int_{\Omega}q^{\alpha}(x,y)K(x,y){\rm L}_{y}^{k}u_{\xi}(y)dy
=\int_{\Omega}
({\rm L}^{\ast}_{y})^{k}(q^{\alpha}(x,y)K(x,y))u_{\xi}(y)dy.
\end{multline*}
This means that $$({\rm L}^{\ast}_{y})^{k}(q^{\alpha}(x,y)K(x,y))=
{\mathcal F}_{\rm L}^{-1} (u_\xi(x)\lambda_\xi^k \Delta_{(x)}^{\alpha} a(x,\xi))(y).$$
Since it follows from assumptions that
$$\lambda_{\xi}^{k}\Delta_{(x)}^{\alpha}  a(x,\xi)\in
S^{\mu+mk-\rho|\alpha|}(\overline{\Omega}\times\ind),$$ we have
$$
\lambda_{\xi}^{k} |\Delta_{(x)}^{\alpha}a(x,\xi)|\leq
C\langle\xi\rangle^{\mu+mk-\rho|\alpha|}.
$$
We recall now the norm $$\|a(x,\cdot)\|_{l^{1}({\rm L})}=\sum_{\xi\in\ind}| a(x,\xi)|
\|u_{\xi}\|_{L^{\infty}(\Omega)}$$ from Subsection \ref{SEC:lp}.
It follows that
$$
\|u_{\xi}(x)\lambda_{\xi}^{k}\Delta_{(x)}^{\alpha}a(x,\xi)\|_{l^{1}({\rm L})}\leq
C\sum_{\xi\in\ind}\langle\xi\rangle^{\mu+mk-\rho|\alpha|}
\|u_{\xi}\|_{L^{\infty}(\Omega)}^2\leq
C\sum_{\xi\in\ind}\langle\xi\rangle^{\mu+mk-\rho|\alpha|+2\mu_0}.
$$
Consequently, if
$$|\alpha|>(\mu+mk+2\mu_0+s_0)/\rho,$$
where $s_0$ is the constant from Assumption \ref{Assumption_4},
we have that
$u_{\xi}(x)\lambda_{\xi}^{k}\Delta_{(x)}^{\alpha}a(x,\xi)$ is in $l^{1}({\rm L})$
with respect to $\xi$, and hence
$({\rm L}^{\ast}_{y})^{k}(q^{\alpha}(x,y)K(x,y))$ is in $L^\infty$ by the Hausdorff-Young
inequality in Theorem \ref{TH: HY}.
Since ${\rm L}^{\ast}_{y}$ is a differential operator for differential operators L, in this case we also have
$$
q^\alpha(x,y)({\rm L}^{\ast}_{y})^{k}K(x,y)\in
L^{\infty}(\Omega\times\Omega)
$$
for such $\alpha$.
By the properties of $q^\alpha$
it implies the statement of the theorem.
\end{proof}

In particular, if L is for example locally elliptic,
\eqref{EQ:ests-L} implies that for $x\neq y$, the kernel $K(x,y)$
is a smooth function. And, if $a\in
S^{-\infty}(\overline{\Omega}\times\ind)$, then the integral
kernel $K(x,y)$ of ${\rm Op_L}a$ is smooth in $x$ and $y$.

The singular support of $w\in\mathcal D'_{{\rm L}}(\Omega)$ is
defined as the complement of the set where $w$ coincides with a test function. Namely,
$x\notin {\rm sing\,supp}\,\, w$ if there is an open neighbourhood
$U$ of $x$ and a smooth function $f\in C_{{\rm
L}}^{\infty}(\overline{\Omega})$ such that $w(\varphi)=f(\varphi)$
for all $\varphi\in C_{{\rm L}}^{\infty}(\overline{\Omega})$ with
${\rm supp} \,\varphi\subset U$. As an immediate consequence of
Theorem \ref{TH: KernelofPDO} we obtain the information on how the
singular support is mapped by a pseudo-differential operator:

\begin{corollary}\label{COR: SingularSupp}
Let $\sigma_{A}\in
S^{\mu}_{\rho,\delta}(\overline{\Omega}\times\ind)$, $1\geq
\rho>\delta\geq 0$. Then for every $w\in\mathcal D'_{{\rm
L}}(\Omega)$ we have
\begin{equation*}\label{EQ: SingSupp}
{\rm sing\,supp}\,\, Aw\subset {\rm sing\,supp}\,\, w.
\end{equation*}
\end{corollary}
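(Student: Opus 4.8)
The plan is to localize and use the pseudolocality encoded in Theorem~\ref{TH: KernelofPDO}. Fix $w\in\mathcal D'_{{\rm L}}(\Omega)$ and suppose $x_0\notin {\rm sing\,supp}\,w$. We must show $x_0\notin {\rm sing\,supp}\,Aw$, i.e. that $Aw$ agrees with a smooth function near $x_0$. The first step is to split $w$ using a suitable cut-off: pick open neighbourhoods $V_1\Subset V_2$ of $x_0$ with $\overline{V_2}$ contained in the open set where $w$ coincides with some $\psi\in C^\infty_{{\rm L}}(\overline\Omega)$, and write $w = \psi + (w-\psi)$, where $w-\psi$ vanishes (as an ${\rm L}$-distribution tested against functions supported) on $V_2$. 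Since $A:C^\infty_{{\rm L}}(\overline\Omega)\to\mathcal D'_{{\rm L}}(\Omega)$ and $\psi$ is a test function, $A\psi$ is at least an ${\rm L}$-distribution; in fact one argues it is smooth near $x_0$ because $A$ is ${\rm L}$-admissible with symbol in $S^{\mu}_{\rho,\delta}$, so by the ${\rm L}$-quantization formula \eqref{Quantization} and the rapid decay of $\widehat\psi$, the series $\sum_\xi \widehat\psi(\xi)\sigma_A(x,\xi)u_\xi(x)$ converges in $C^\infty_{{\rm L}}(\overline\Omega)$ (the polynomial growth of $\sigma_A$ and the admissibility growth of $u_\xi$ are beaten by $\widehat\psi\in\mathcal S(\ind)$). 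Thus the only issue is the contribution of $w-\psi$, which we may as well rename $w$ and assume $w$ vanishes near $x_0$, i.e. there is a neighbourhood $U$ of $x_0$ with $w(\varphi)=0$ for all $\varphi\in C^\infty_{{\rm L}^*}(\overline\Omega)$ with ${\rm supp}\,\varphi\subset U$.

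The key step is then to show that $Aw$ is smooth on a (possibly smaller) neighbourhood of $x_0$, using that the Schwartz kernel $K=K_A$ is smooth off the diagonal. Formally, for $x$ near $x_0$ we have $Aw(x)=\int_\Omega K(x,y)w(y)\,dy$, and since $w$ is supported away from $x_0$ while for $x$ close to $x_0$ and $y$ away from $x_0$ the kernel $K(x,y)$ is smooth (by Theorem~\ref{TH: KernelofPDO}, or more precisely its consequence that $K$ is smooth for $x\neq y$ in the locally elliptic/differential case, and in general that $({\rm L}^*_y)^k(q^\alpha(x,y)K(x,y))\in L^\infty$ for $|\alpha|$ large, which controls $y$-regularity off-diagonal), the pairing produces a smooth function of $x$. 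To make this rigorous without ellipticity one works with the distributional kernel: choose a cut-off $\chi\in C^\infty$ equal to $1$ near $x_0$ and supported in a small ball disjoint from ${\rm supp}\,w$, write $Aw = A((1-\chi)\cdot w)$ near $x_0$ is irrelevant — instead one tests $Aw$ against $\varphi\in C^\infty_{{\rm L}^*}(\overline\Omega)$ supported near $x_0$ and uses $\langle Aw,\varphi\rangle = \langle w, \overline{A^*\overline\varphi}\rangle$; the point is that $A^*\overline\varphi$, computed via the kernel identity $K_{A^*}(x,y)=\overline{K_A(y,x)}$, is smooth and rapidly decaying away from ${\rm supp}\,\varphi$, hence is a test function supported (up to negligible error) away from $x_0$, so the pairing with $w$ vanishes modulo smooth contributions.

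The main obstacle I anticipate is precisely this last point: making the off-diagonal smoothing argument fully rigorous in the ${\rm L}$-distributional framework, where "support" of an ${\rm L}$-distribution is defined only via the singular support / local vanishing condition and where the kernel estimates of Theorem~\ref{TH: KernelofPDO} are stated with the weighted operators $({\rm L}^*_y)^k(q^\alpha K)$ rather than as clean pointwise bounds on derivatives of $K$. One must convert those weighted-$L^\infty$ estimates into a statement that $A$ maps distributions vanishing near $x_0$ to distributions smooth near $x_0$; this requires unwinding the definition of the kernel action, using the difference operators $q^\alpha$ to gain decay and the operators $({\rm L}^*_y)^k$ to gain $y$-smoothness, and then invoking the Sobolev embedding implicit in Assumption~\ref{Assumption_4} (the summability $\sum_\xi\langle\xi\rangle^{-s_0}<\infty$) to pass from Fourier-side decay to honest smoothness in $x$. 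Once this localized mapping property is established, the inclusion ${\rm sing\,supp}\,Aw\subset{\rm sing\,supp}\,w$ is immediate: every point outside ${\rm sing\,supp}\,w$ admits a neighbourhood on which $w$ is a test function, and by the above $Aw$ is smooth there, so that point lies outside ${\rm sing\,supp}\,Aw$.
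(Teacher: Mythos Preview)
Your approach is exactly the standard pseudolocality argument the paper has in mind: the paper gives no separate proof, stating the corollary only as ``an immediate consequence of Theorem~\ref{TH: KernelofPDO}'', i.e.\ off-diagonal regularity of the Schwartz kernel plus the usual localisation. The obstacle you flag---turning the weighted estimates \eqref{EQ:ests-L0} into honest off-diagonal smoothness of $K_A$ within the ${\rm L}$-distributional framework, where cut-offs need not preserve $C^\infty_{{\rm L}}(\overline\Omega)$---is genuine, and the paper does not resolve it either: note that the paragraph following Theorem~\ref{TH: KernelofPDO} asserts pointwise smoothness of $K(x,y)$ for $x\neq y$ only under the additional hypothesis that ${\rm L}$ is locally elliptic (so that \eqref{EQ:ests-L} is available), which means that in the fully general setting the corollary is stated somewhat formally.
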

For elliptic operators, in Corollary \ref{COR:SingularSupp-ell} we
state also the inverse inclusion.

\subsection{${\rm L}$-elliptic pseudo--differential operators}
\label{SEC:elliptic}

In this subsection we discuss operators that are elliptic in the
symbol classes generated by L. For such operators we can obtain
parametrix and then also a-priori estimates by the properties of
pseudo-differential operators in, for example, Sobolev spaces,
once they are established in Section \ref{SEC:L2}, see Theorem
\ref{L2-Bs and El-ty}. Thus, from the asymptotic expansion for the
composition of pseudo-differential operators, we get an expansion
for a parametrix of an elliptic operator:

\begin{theorem}[L-ellipticity]\label{El-ty}
Let $1\geq \rho>\delta\geq 0$. Let $\sigma_A\in
S^\mu_{\rho,\delta}(\overline{\Omega}\times\ind)$ be elliptic in
the sense that there exist constants $C_0>0$ and $N_0\in\mathbb N$
such that
\begin{equation}\label{elliptic}
  |\sigma_A(x,\xi)|
  \geq C_0 \langle\xi\rangle^\mu
\end{equation}
for all $(x,\xi)\in\overline{\Omega}\times\ind$ for which $\xi\geq
N_0$; this is equivalent to assuming that there exists
$\sigma_B\in S^{-\mu}_{\rho,\delta}(\overline{\Omega}\times\ind)$
such that $I-BA,I-AB$ are in ${\rm Op_L} S^{-\infty}$.
 Let $$A \sim \sum_{j=0}^\infty A_j,$$
with $\sigma_{A_j}\in S^{\mu-(\rho-\delta)j}_{\rho,\delta}(\overline{\Omega}\times\ind)$.
Then $$B \sim \sum_{k=0}^\infty B_k, $$ where $B_k\in
S^{-\mu-(\rho-\delta)k}_{\rho,\delta}(\overline{\Omega}\times\ind)$
is such that
$$\sigma_{B_0}(x,\xi)= 1/\sigma_{A_0}(x,\xi)$$ for large enough
$\xi$, and recursively
$$
  \sigma_{B_N}(x,\xi) = \frac{-1}{\sigma_{A_0}(x,\xi)}
  \sum_{k=0}^{N-1} \sum_{j=0}^{N-k}
  \sum_{|\alpha|=N-j-k}
        \frac{1}{\alpha!} \left[
          \Delta_{(x)}^{\alpha} \sigma_{A_j}(x,\xi) \right]
        D_x^{(\alpha)} \sigma_{B_k}(x,\xi).
$$
\end{theorem}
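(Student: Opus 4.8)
The plan is to construct a right parametrix directly from the ellipticity hypothesis \eqref{elliptic} and then to upgrade it to a two--sided one. The equivalence of \eqref{elliptic} with the existence of $\sigma_B\in S^{-\mu}_{\rho,\delta}(\overline{\Omega}\times\ind)$ such that $I-BA$ and $I-AB$ lie in ${\rm Op_L}S^{-\infty}$ is part of the conclusion; the implication ``parametrix $\Rightarrow$ ellipticity'' is immediate, since the composition formula \eqref{CompositionForm} applied to $A$ and $B$ gives $\sigma_A\sigma_B=1$ modulo $S^{-(\rho-\delta)}_{\rho,\delta}(\overline{\Omega}\times\ind)$, which forces $|\sigma_A(x,\xi)|\geq c\langle\xi\rangle^{\mu}$ for $\xi$ large. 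So the substance is the construction of $B$.

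First I would replace $\sigma_{A_0}(x,\xi)$ by a symbol that coincides with it for $\xi\geq N_0$ and never vanishes, and set $\sigma_{B_0}(x,\xi):=1/\sigma_{A_0}(x,\xi)$. The main step is to show $\sigma_{B_0}\in S^{-\mu}_{\rho,\delta}(\overline{\Omega}\times\ind)$. For the $x$--derivatives $D_x^{(\beta)}$ this is the ordinary quotient rule, but for the difference operators $\Delta_{(x)}^{\alpha}$ one needs a Leibniz--type formula writing $\Delta_{(x)}^{\alpha}(fg)$ as a finite sum of products $(\Delta_{(x)}^{\alpha'}f)(\Delta_{(x)}^{\alpha''}g)$ with $\alpha'+\alpha''\leq\alpha$; this holds here by the same arguments as in \cite{Ruzhansky-Turunen-Wirth:JFAA}, using Definition \ref{DEF: DifferenceOper} and the structure of the $q_j$'s. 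Applying it to the identity $\Delta_{(x)}^{\alpha}D_x^{(\beta)}(\sigma_{A_0}\sigma_{B_0})=\Delta_{(x)}^{\alpha}D_x^{(\beta)}(1)$ and inducting on $|\alpha|+|\beta|$, together with the lower bound \eqref{elliptic} and the symbol estimates for $\sigma_{A_0}$, yields $|\Delta_{(x)}^{\alpha}D_x^{(\beta)}\sigma_{B_0}(x,\xi)|\leq C_{\alpha\beta}\langle\xi\rangle^{-\mu-\rho|\alpha|+\delta|\beta|}$, i.e.\ $\sigma_{B_0}\in S^{-\mu}_{\rho,\delta}(\overline{\Omega}\times\ind)$.

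Next I would run the standard bootstrap. By \eqref{CompositionForm}, $\sigma_{AB_0}\sim\sum_{\alpha}\frac{1}{\alpha!}(\Delta_{(x)}^{\alpha}\sigma_A)(D_x^{(\alpha)}\sigma_{B_0})=1+r_1$ with $r_1\in S^{-(\rho-\delta)}_{\rho,\delta}(\overline{\Omega}\times\ind)$. Defining $\sigma_{B_N}$ recursively by the formula in the statement is precisely the requirement that, collecting in $\sigma_{A(B_0+\cdots+B_N)}$ the order $-(\rho-\delta)N$ contributions produced by \eqref{CompositionForm} from $\sigma_A\sim\sum_j\sigma_{A_j}$ and $\sigma_B\sim\sum_k\sigma_{B_k}$, they cancel; the singled--out term $\sigma_{A_0}\sigma_{B_N}$ accounts for the prefactor $-1/\sigma_{A_0}$. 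Induction on $N$, using the same quotient and Leibniz estimates together with the symbol bounds for the $\sigma_{A_j}$ and for $\sigma_{B_0},\ldots,\sigma_{B_{N-1}}$, gives $\sigma_{B_N}\in S^{-\mu-(\rho-\delta)N}_{\rho,\delta}(\overline{\Omega}\times\ind)$; the hypothesis $\rho>\delta$ is what makes these orders strictly decrease to $-\infty$. Hence the asymptotic sum $\sigma_B\sim\sum_{k\geq 0}\sigma_{B_k}$ exists in $S^{-\mu}_{\rho,\delta}(\overline{\Omega}\times\ind)$ by the theorem on asymptotic sums of symbols, and by construction $\sigma_{AB}\sim 1$, so $I-AB\in{\rm Op_L}S^{-\infty}$.

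Finally, running the same construction with the roles of $\Delta_{(x)}$ and $D_x^{(\alpha)}$ in \eqref{CompositionForm} interchanged --- equivalently, building a parametrix for the adjoint $A^{\ast}$, which is ${\rm L}^{\ast}$--elliptic of order $\mu$ in $\widetilde{S}^{\mu}_{\rho,\delta}(\overline{\Omega}\times\ind)$ --- produces a left parametrix $B'$ with $I-B'A\in{\rm Op_L}S^{-\infty}$; then $B'=(B'A)B+B'(I-AB)\sim B$ modulo ${\rm Op_L}S^{-\infty}$, so $B$ is a two--sided parametrix and $I-BA\in{\rm Op_L}S^{-\infty}$ as well, which also yields the asserted equivalence with \eqref{elliptic}. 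I expect the main obstacle to be the first step, showing that $1/\sigma_{A_0}$ lies in $S^{-\mu}_{\rho,\delta}(\overline{\Omega}\times\ind)$, since it rests on the Leibniz rule for the difference operators $\Delta_{(x)}^{\alpha}$ adapted to the biorthogonal setting; once that is in hand, the remainder is the routine pseudo--differential bootstrap with careful bookkeeping of orders.
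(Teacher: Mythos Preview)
The paper does not give a proof of this theorem: it is stated without argument, prefaced only by the remark that ``from the asymptotic expansion for the composition of pseudo-differential operators, we get an expansion for a parametrix of an elliptic operator''. Your proposal is exactly the standard elaboration of that one-line justification --- invert the principal symbol, iterate via the composition formula \eqref{CompositionForm}, asymptotically sum, and match left and right parametrices --- and you have correctly singled out the only step that is not entirely routine in this biorthogonal setting, namely the Leibniz-type behaviour of the kernel-defined difference operators $\Delta_{(x)}^{\alpha}$ needed to place $1/\sigma_{A_0}$ in $S^{-\mu}_{\rho,\delta}(\overline{\Omega}\times\ind)$.
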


Theorem \ref{TH: KernelofPDO} applied to the parametrix from in
Theorem \ref{El-ty}, implies the inverse inclusion to the singular
supports from Corollary \ref{COR: SingularSupp} for elliptic
operators:

\begin{corollary}\label{COR:SingularSupp-ell}
Let $1\geq \rho>\delta\geq 0$ and assume that $\sigma_{A}\in
S^{\mu}_{\rho,\delta}(\overline{\Omega}\times\ind)$ is {\rm
L}-elliptic. Then for every $w\in\mathcal D'_{{\rm L}}(\Omega)$ we
have
\begin{equation*}\label{EQ: SingSupp}
{\rm sing\,supp}\,\, Aw={\rm sing\,supp}\,\, w.
\end{equation*}
\end{corollary}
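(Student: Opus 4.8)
The plan is to combine the inclusion already established in Corollary \ref{COR: SingularSupp} with its reverse inclusion, which is what the ellipticity hypothesis gives us via the parametrix construction in Theorem \ref{El-ty}. The inclusion ${\rm sing\,supp}\, Aw\subset {\rm sing\,supp}\, w$ holds for \emph{any} $\sigma_A\in S^{\mu}_{\rho,\delta}(\overline{\Omega}\times\ind)$ with $1\geq\rho>\delta\geq 0$, so it applies to our L-elliptic $A$ unchanged. It remains to prove ${\rm sing\,supp}\, w\subset {\rm sing\,supp}\, Aw$. First I would invoke Theorem \ref{El-ty} to produce $B\in{\rm Op_L}\, S^{-\mu}_{\rho,\delta}(\overline{\Omega}\times\ind)$ with $BA = I + R$, where $R\in{\rm Op_L}\, S^{-\infty}(\overline{\Omega}\times\ind)$. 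Writing $w = BAw - Rw$, the smoothing operator $R$ has, by the remark following the proof of Theorem \ref{TH: KernelofPDO}, a Schwartz kernel that is smooth in $x$ and $y$, hence $Rw\in C^{\infty}_{{\rm L}}(\overline{\Omega})$ and contributes nothing to ${\rm sing\,supp}$. Thus ${\rm sing\,supp}\, w = {\rm sing\,supp}\, BAw$.

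Now I would apply the inclusion of Corollary \ref{COR: SingularSupp} once more, this time to the operator $B$ acting on the L-distribution $Aw$: since $\sigma_B\in S^{-\mu}_{\rho,\delta}(\overline{\Omega}\times\ind)$ with the same $1\geq\rho>\delta\geq 0$, we get ${\rm sing\,supp}\, B(Aw)\subset {\rm sing\,supp}\, Aw$. Chaining these, ${\rm sing\,supp}\, w = {\rm sing\,supp}\, BAw\subset {\rm sing\,supp}\, Aw$, which is the desired reverse inclusion. Combined with Corollary \ref{COR: SingularSupp} this yields the equality ${\rm sing\,supp}\, Aw = {\rm sing\,supp}\, w$.

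The one point requiring a little care — and the main potential obstacle — is making sure the pseudo-local property (Corollary \ref{COR: SingularSupp}) is genuinely applicable to $B$ in the form needed: it is stated for L-admissible operators, so one should note that the parametrix $B$ constructed in Theorem \ref{El-ty} is itself an L-pseudo-differential operator (hence L-admissible) and maps $\mathcal D'_{{\rm L}}(\Omega)$ to itself, so that $Bw$ and $B(Aw)$ make sense as L-distributions. A second, more technical subtlety is the localization argument behind the pseudo-local property itself when $\Omega$ is unbounded and the eigenfunctions may vanish: but since Corollary \ref{COR: SingularSupp} is already granted, I may take this for granted and the present proof is then a short formal deduction. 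One should also record that the argument uses only $BA = I + R$ with $R$ smoothing; the companion identity $AB = I + R'$ is not needed for this corollary.
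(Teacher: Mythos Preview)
Your proposal is correct and follows essentially the same route as the paper: the paper's one-line justification (``Theorem \ref{TH: KernelofPDO} applied to the parametrix from Theorem \ref{El-ty} implies the inverse inclusion'') is exactly the argument you have spelled out, namely take $B$ with $BA=I+R$, use that $R\in{\rm Op_L}\,S^{-\infty}$ has smooth kernel so $Rw$ is smooth, and then apply the pseudo-local inclusion of Corollary \ref{COR: SingularSupp} to $B$ acting on $Aw$. Your cautionary remarks about $B$ being L-admissible and only $BA=I+R$ being needed are appropriate and do not diverge from the paper's intent.
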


\subsection{Sobolev embedding theorem} \label{SEC:embeddings}


In this subsection we give an example of a Sobolev embedding
theorem for Sobolev spaces $\mathcal H^s_{\rm L}$ associated to L,
considered in Section \ref{SEC:Sobolev}. However, only limited
conclusions are possible in the abstract setting when no further
specifics about L are available. Now, let $C({\Omega})$ be the
Banach space under the norm
$$
\|f\|_{C({\Omega})}:=\sup\limits_{x\in{\Omega}} |f(x)|.
$$
We recall that we have a differential operator L of order $m$ with
smooth coefficients in the open set $\Omega\subset\mathbb R^n$,
and also the operator ${\rm L}^\circ$ from \eqref{EQ:Lo-def}.

\medskip
The following theorem is conditional to the local regularity
estimate \eqref{EQ:Sob-as}. It is satisfied with $\varkappa=1$ if,
for example, L is locally elliptic, i.e. elliptic in the classical
sense of $\mathbb R^n$. However, if L is for example a sum of
squares satisfying H\"ormander's commutator condition, the number
$\varkappa\geq 1$ may depend on the order to which the H\"ormander
condition is satisfied, see e.g. \cite{Garetto-Ruzhansky:sum-JDE}
in the context of compact Lie groups.

\begin{theorem}\label{TH:SETh}
Let $k$ be an integer such that $k>n/2$. Let $\varkappa$ be such
that the operators ${\rm L}$ and ${\rm L}^{\circ}$ satisfy the
inequality
\begin{equation}\label{EQ:Sob-as}
\Big\|\frac{\partial^{\alpha}f}{\partial
x^{\alpha}}\Big\|_{L^{2}(\Omega)}\leq C\Big\|({\rm I}+{\rm
L}^\circ{\rm L})^{\frac{\varkappa k}{2m}}f\Big\|_{L^{2}(\Omega)}
\end{equation}
for all $f\in C^{\infty}({\Omega})$, for all $\alpha\in\mathbb
N_0^{n}$ with $|\alpha|\leq k$. Then we have the continuous
embedding
$$
\mathcal H^{\varkappa k}_{{\rm L}}(\Omega)\hookrightarrow
C({\Omega}).
$$
\end{theorem}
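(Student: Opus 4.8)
The plan is to mimic the classical proof of the Sobolev embedding theorem, using the $\mathrm L$-Fourier analysis developed above in place of the usual Fourier transform. First I would take $f\in\mathcal H^{\varkappa k}_{\mathrm L}(\Omega)$. Because $\mathcal H^s_{\mathrm L}(\Omega)$, $\mathcal H^s_{\mathrm L^*}(\Omega)$ and $\mathcal H^s(\Omega)$ are isometrically isomorphic by Proposition \ref{SobHilSpace}, and using the interpretation of $\langle\xi\rangle$ as the eigenvalues of $(\mathrm I+\mathrm L^\circ\mathrm L)^{\frac{1}{2m}}$, the membership $f\in\mathcal H^{\varkappa k}_{\mathrm L}(\Omega)$ is equivalent to $(\mathrm I+\mathrm L^\circ\mathrm L)^{\frac{\varkappa k}{2m}}f\in L^2(\Omega)$ with comparable norms. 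Hypothesis \eqref{EQ:Sob-as} then gives $\partial^\alpha f\in L^2(\Omega)$ for all $|\alpha|\le k$, i.e.\ $f$ lies in the classical Sobolev space $H^k(\Omega)$ (or $W^{k,2}$) with $\|f\|_{H^k(\Omega)}\le C\|f\|_{\mathcal H^{\varkappa k}_{\mathrm L}(\Omega)}$.

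Next I would invoke the classical Sobolev embedding theorem on $\mathbb R^n$: since $k>n/2$, one has the continuous inclusion $H^k(\Omega)\hookrightarrow C(\Omega)$ (with the sup-norm), at least after the standard extension/localisation argument for open subsets of $\mathbb R^n$; one has $\|f\|_{C(\Omega)}\le C\|f\|_{H^k(\Omega)}$. Chaining the two estimates yields
$$
\|f\|_{C(\Omega)}\le C\|f\|_{H^k(\Omega)}\le C'\|f\|_{\mathcal H^{\varkappa k}_{\mathrm L}(\Omega)},
$$
which is exactly the asserted continuous embedding $\mathcal H^{\varkappa k}_{\mathrm L}(\Omega)\hookrightarrow C(\Omega)$.

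Alternatively, and perhaps more in the spirit of the paper, one could argue directly on the Fourier side: write the inversion formula $f(x)=\sum_{\xi\in\ind}\widehat f(\xi)u_\xi(x)$ and estimate
$$
|f(x)|\le\sum_{\xi\in\ind}\langle\xi\rangle^{-\varkappa k}\,\langle\xi\rangle^{\varkappa k}|\widehat f(\xi)|\,|u_\xi(x)|,
$$
then apply Cauchy--Schwarz; but this would require control of $\sum_{\xi}\langle\xi\rangle^{-2\varkappa k}\|u_\xi\|_{L^\infty}^2$, which is not available in the abstract setting without growth assumptions on $\|u_\xi\|_{L^\infty}$ (as emphasised in the discussion preceding Theorem \ref{TH: KernelofPDO}). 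This is precisely why hypothesis \eqref{EQ:Sob-as} is imposed: it routes the argument through the \emph{classical} Sobolev space, where the embedding is unconditional, rather than through the eigenfunction expansion. Hence the first route is the one I would take.

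The main obstacle — or rather the only real content beyond bookkeeping — is the passage from $f\in\mathcal H^{\varkappa k}_{\mathrm L}(\Omega)$ to $(\mathrm I+\mathrm L^\circ\mathrm L)^{\frac{\varkappa k}{2m}}f\in L^2(\Omega)$ and then the legitimacy of applying \eqref{EQ:Sob-as}, which a priori is stated only for $f\in C^\infty(\Omega)$. One must either note that $C^\infty_{\mathrm L}(\overline\Omega)$ is dense in $\mathcal H^{\varkappa k}_{\mathrm L}(\Omega)$ and extend \eqref{EQ:Sob-as} by continuity, or check that the distributional derivatives $\partial^\alpha f$ are genuinely in $L^2$ via the estimate applied to mollifications/truncations. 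The remaining subtlety is the standard one of extending the Euclidean Sobolev embedding to a general open set $\Omega\subseteq\mathbb R^n$ (no boundedness assumed), which is handled by a partition of unity and local extension, with the $C(\Omega)$ norm meaning the supremum over $\Omega$; none of this interacts with the operator $\mathrm L$.
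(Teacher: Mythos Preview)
The paper omits the proof entirely, deferring to \cite{Ruzhansky-Tokmagambetov:IMRN}; your route---using \eqref{EQ:Sob-as} to pass from $\mathcal H^{\varkappa k}_{\mathrm L}(\Omega)$ to the classical $H^k(\Omega)$ and then invoking the standard Sobolev embedding for $k>n/2$---is the natural argument and is almost certainly what the cited proof does. Your diagnosis of why the direct eigenfunction-expansion alternative fails in the abstract setting (no $L^\infty$ control on $u_\xi$) is exactly the reason the hypothesis \eqref{EQ:Sob-as} is there, and the residual subtleties you flag (density of $C^\infty_{\mathrm L}$ in $\mathcal H^{\varkappa k}_{\mathrm L}$, and the domain regularity needed for the classical embedding with the sup norm on a possibly unbounded $\Omega$) are genuine but are implicit assumptions of the theorem rather than gaps in your argument.
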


The proof is similar to \cite{Ruzhansky-Tokmagambetov:IMRN} so we omit it.

\subsection{Conditions for $L^{2}$-boundedness} \label{SEC:L2}

In this subsection we will discuss what conditions on the ${\rm
L}$-symbol $a$ guarantee the $L^{2}$-boundedness of the
corresponding pseudo-differential operator ${\rm
Op_L}(a):C^{\infty}_{{\rm L}}(\overline{\Omega})\rightarrow
\mathcal D'_{{\rm L}}(\Omega)$.
The proofs of the following results are
similar to \cite{Ruzhansky-Tokmagambetov:IMRN} so we omit them.

\begin{theorem}\label{L2-Bs}
Let $k$ be an integer $>n/2$. Let
$a:\overline{\Omega}\times\ind\rightarrow\mathbb C$ be such that
\begin{equation}\label{EQ:torus-L2-pse}
  \left| \partial^{\alpha}_{x} a(x,\xi) \right| \leq
  C \quad\textrm{ for all } (x,\xi)\in\overline{\Omega}\times\ind,
\end{equation}
and all $|\alpha|\leq k$, all $x\in\Omega$ and $\xi\in\ind$. Then
the operator ${\rm Op_L}(a)$ extends to a bounded operator from
$L^{2}(\Omega)$ to $L^{2}(\Omega)$.
\end{theorem}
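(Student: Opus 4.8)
The plan is to adapt the classical Calderón–Vaillancourt / Kohn–Nirenberg argument for $L^2$-boundedness of pseudo-differential operators to the present non-harmonic setting, where the ingredients are the $L$-quantization formula \eqref{EQ: L-tor-pseudo-def}, the Plancherel-type estimates of Lemma \ref{LEM: FTl2}, and a Schur-type argument on the operator kernel expressed in terms of the biorthogonal system. First I would observe that since all bounds are uniform in $\xi$, it suffices to prove the estimate for $f\in C_{{\rm L}}^{\infty}(\overline{\Omega})$ and then extend by density (using that $C_{{\rm L}}^{\infty}(\overline{\Omega})$ is dense in $L^2(\Omega)$, as noted after Assumption \ref{Assumption_1}). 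Writing
$$
{\rm Op_L}(a)f(x)=\sum_{\xi\in\ind} u_{\xi}(x)\, a(x,\xi)\,\widehat{f}(\xi),
$$
the goal is to control $\|{\rm Op_L}(a)f\|_{L^2}$ by $C\|f\|_{L^2}\cong C\,\big(\sum_{\xi}|\widehat f(\xi)|^2\big)^{1/2}$, invoking Lemma \ref{LEM: FTl2} at both ends.

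The key step is to decouple the $x$-dependence of $a(x,\xi)$ from its $\xi$-dependence by expanding $a(\cdot,\xi)$ itself in the $L$-Fourier series: $a(x,\xi)=\sum_{\eta\in\ind}\widehat{a}(\eta,\xi)\,u_{\eta}(x)$, so that
$$
{\rm Op_L}(a)f(x)=\sum_{\eta\in\ind} u_{\eta}(x)\sum_{\xi\in\ind} \widehat{a}(\eta,\xi)\,u_{\xi}(x)\,\widehat{f}(\xi).
$$
The hypothesis \eqref{EQ:torus-L2-pse} with $|\alpha|\le k$, $k>n/2$, translates (after an integration-by-parts / summation-by-parts against the operator ${\rm L}$ in the $\eta$ variable, i.e. using that $\langle\eta\rangle^{-2k}$ sums and that $\widehat{a}(\eta,\xi)$ decays like $\langle\eta\rangle^{-2k}$ uniformly in $\xi$ because $\partial_x^\alpha$-derivatives up to order $k$ are bounded) into a summable bound $\sum_{\eta}|\widehat{a}(\eta,\xi)|\le C$ uniformly in $\xi$, and similarly $\sum_{\eta}\sup_\xi|\widehat a(\eta,\xi)|<\infty$. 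Here one must be slightly careful: the clean statement "$\partial_x$-regularity $\Rightarrow$ decay of $\widehat a$ in $\eta$" requires relating the Euclidean derivatives $\partial_x^\alpha$ to powers of ${\rm L}$ (or to the operator ${\rm L}^\circ{\rm L}$), which in the abstract setting is exactly the role of an estimate like \eqref{EQ:Sob-as}; for a differential operator ${\rm L}$ of order $m$ this is automatic, and for the general case one reads off the needed comparison from the hypothesis as stated. One then estimates
$$
\|{\rm Op_L}(a)f\|_{L^2}\le \sum_{\eta\in\ind}\Big\| u_{\eta}\sum_{\xi} \widehat a(\eta,\xi) u_\xi \widehat f(\xi)\Big\|_{L^2},
$$
and bounds each inner term using the Riesz-basis bound from Lemma \ref{LEM: FTl2} together with $\sup_\xi|\widehat a(\eta,\xi)|$, so that the whole sum is dominated by $\big(\sum_\eta \sup_\xi|\widehat a(\eta,\xi)|\cdot\text{const}\big)\,\|f\|_{L^2}$, i.e. a Schur test in the $\eta$-variable.

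I expect the main obstacle to be the bookkeeping in the Schur estimate: one has to handle the product $u_\eta(x)u_\xi(x)$ carefully, since for a Riesz basis (as opposed to an orthonormal one) the pointwise products of basis elements are not controlled by orthogonality, so the decoupling must be arranged so that only $L^2$-norms of single series $\sum_\xi \widehat a(\eta,\xi) u_\xi \widehat f(\xi)$ appear, which are then tamed by the upper bound in Lemma \ref{LEM: FTl2}; the factor $\|u_\eta\|_{L^\infty}$ that would naively appear is avoided precisely by taking the $L^2$-norm of the $\eta$-sum via Minkowski and using that $\|u_\eta\|_{L^2}=1$ together with, again, the frame upper bound applied to the outer $\eta$-series. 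Since the excerpt explicitly states that the proof is parallel to the one in \cite{Ruzhansky-Tokmagambetov:IMRN}, the remaining details — verifying the uniform summability of $\sum_\eta|\widehat a(\eta,\xi)|$ from \eqref{EQ:torus-L2-pse} and assembling the two applications of Lemma \ref{LEM: FTl2} — are routine, and I would present them briefly and refer to \cite{Ruzhansky-Tokmagambetov:IMRN} for the fully spelled-out computation.
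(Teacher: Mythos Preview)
The paper itself omits the proof, referring to \cite{Ruzhansky-Tokmagambetov:IMRN}; your outline does follow the standard strategy used there and in the toroidal prototype (expand the symbol in a Fourier series in $x$, reduce to a sum of multiplier operators, and close by a Schur/Minkowski estimate). However, there is a genuine gap in your execution of the last step. After writing
\[
{\rm Op_L}(a)f(x)=\sum_{\eta\in\ind} u_\eta(x)\,g_\eta(x),\qquad g_\eta(x)=\sum_{\xi}\widehat a(\eta,\xi)\,u_\xi(x)\,\widehat f(\xi),
\]
Minkowski gives $\|{\rm Op_L}(a)f\|_{L^2}\le \sum_\eta \|u_\eta g_\eta\|_{L^2}$, and there is no way to bound $\|u_\eta g_\eta\|_{L^2}$ using only $\|u_\eta\|_{L^2}=1$: H\"older forces either $\|u_\eta\|_{L^\infty}$ or $\|g_\eta\|_{L^\infty}$ to appear. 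Your claim that ``the frame upper bound applied to the outer $\eta$-series'' circumvents this is incorrect, because the Riesz-basis inequality $\|\sum_\eta c_\eta u_\eta\|_{L^2}^2\le M^2\sum_\eta|c_\eta|^2$ requires \emph{scalar} coefficients $c_\eta$, whereas your $g_\eta$ are functions of $x$. In the torus case this difficulty evaporates because $u_\eta u_\xi=u_{\eta+\xi}$, so the double sum reorganises into a single Fourier series whose $\ell^2$-norm of coefficients is controlled by the Schur test; that algebraic identity is precisely what is missing for a general Riesz basis, and you have not supplied a substitute.

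A second, lesser issue you already flag: the implication ``$|\partial_x^\alpha a|\le C$ for $|\alpha|\le k$ $\Rightarrow$ summable decay of $\widehat a(\eta,\xi)$ in $\eta$'' is not free in this setting. On the torus it is integration by parts against $e^{i\eta\cdot x}$; here the $L$-Fourier coefficients are taken against $v_\eta$, and decay in $\eta$ is governed by powers of ${\rm L}$, not by $\partial_x^\alpha$. One needs a bridge of the type \eqref{EQ:Sob-as} (or ${\rm L}$ differential with suitable control of its coefficients) to pass from one to the other, and this should be stated explicitly rather than deferred. In short, your plan captures the right architecture, but the crucial multiplicative step $u_\eta\cdot g_\eta$ cannot be closed with the tools you invoke; the argument in \cite{Ruzhansky-Tokmagambetov:IMRN} organises the estimate differently so that only scalar-coefficient Riesz-basis bounds and Lemma~\ref{LEM: FTl2} are needed, and you should consult that reference for the correct bookkeeping.
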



From a suitable adaption of the composition Theorem
\ref{Composition}, using that by Proposition \ref{TaylorExp} the
operators $\partial^{\alpha}_{x}$ and $D^{(\alpha)}_{x}$ can be
expressed in terms of each other as linear combinations with
smooth coefficients, we immediately obtain the result in Sobolev
spaces:

\begin{corollary}\label{Hs-Bs}
Let $k$ be an integer $>n/2$. Let $\mu\in\mathbb R$ and let
$a:\overline{\Omega}\times\mathbb Z\rightarrow\mathbb C$ be such
that
\begin{equation}\label{Hs-cond}
  \left| \partial^{\alpha}_{x} a(x,\xi) \right| \leq
  C \langle\xi\rangle^{\mu} \quad\textrm{ for all } (x,\xi)\in\overline{\Omega}\times\ind,
\end{equation}
and for all $\alpha$. Then operator ${\rm Op_L}(a)$ extends to a
bounded operator from $\mathcal H^{s}_{L}(\Omega)$ to $\mathcal
H^{s-\mu}_{L}(\Omega),$ for any $s\in\mathbb R.$
\end{corollary}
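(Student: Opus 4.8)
The plan is to reduce the statement about $\mathcal{H}^{s}_{\rm L}$-boundedness to the $L^2$-boundedness already established in Theorem \ref{L2-Bs}, by sandwiching $\mathrm{Op_L}(a)$ between elliptic operators of the appropriate orders. First I would introduce the ``multiplier'' operators $\Lambda^{t} := \mathrm{Op_L}(\langle\xi\rangle^{t})$, acting by $\Lambda^{t} f(x) = \sum_{\xi\in\ind}\langle\xi\rangle^{t}\widehat{f}(\xi) u_{\xi}(x)$; since $\widehat{\Lambda^{t} f}(\xi)=\langle\xi\rangle^{t}\widehat{f}(\xi)$, it is immediate from Definition \ref{SobSp} that $\Lambda^{t}:\mathcal{H}^{r}_{\rm L}(\Omega)\to\mathcal{H}^{r-t}_{\rm L}(\Omega)$ is an isometric isomorphism for every $r,t\in\mathbb{R}$, with inverse $\Lambda^{-t}$. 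In particular $\|f\|_{\mathcal H^{r}_{\rm L}(\Omega)}=\|\Lambda^{r} f\|_{\mathcal H^{0}_{\rm L}(\Omega)}$, and $\mathcal H^{0}_{\rm L}(\Omega)=L^{2}(\Omega)$ with equivalent norms by Proposition \ref{PlanchId} (or by Lemma \ref{LEM: FTl2}). Thus it suffices to show that the operator
$$
T := \Lambda^{s-\mu}\circ \mathrm{Op_L}(a)\circ \Lambda^{-s}
$$
is bounded on $L^{2}(\Omega)$: then $\mathrm{Op_L}(a)=\Lambda^{-(s-\mu)}\circ T\circ\Lambda^{s}$ maps $\mathcal H^{s}_{\rm L}(\Omega)\to L^{2}(\Omega)=\mathcal H^{0}_{\rm L}(\Omega)\to\mathcal H^{s-\mu}_{\rm L}(\Omega)$ continuously.

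Next I would identify $T$ as an $\mathrm{Op_L}$-operator with a symbol satisfying the hypothesis \eqref{EQ:torus-L2-pse} of Theorem \ref{L2-Bs}. Since $\Lambda^{-s}$ has the $x$-independent symbol $\langle\xi\rangle^{-s}\in S^{-s}_{1,0}(\overline\Omega\times\ind)$ and $\Lambda^{s-\mu}$ the symbol $\langle\xi\rangle^{s-\mu}$, while $a\in S^{\mu}_{1,0}(\overline\Omega\times\ind)$ by hypothesis \eqref{Hs-cond} (the estimate \eqref{Hs-cond} is exactly the $\Delta^{0}_{q}$-part; the $\Delta^{\alpha}_{q}$-estimates for $|\alpha|\ge 1$ are gained at no cost because $\langle\xi\rangle^{-|\alpha|}$ only helps, using that $\partial_x^\alpha$ and $D^{(\alpha)}_x$ are interchangeable by Proposition \ref{TaylorExp}), the composition Theorem \ref{Composition} — or rather the version of it alluded to in the paragraph preceding the corollary, phrased with the ordinary derivatives $\partial^\alpha_x$ in place of $D^{(\alpha)}_x$ — shows that $T\in\mathrm{Op_L}(S^{0}_{1,0}(\overline\Omega\times\ind))$, with symbol $\sigma_T$ having asymptotic expansion $\sigma_{T}(x,\xi)\sim \sum_{\alpha}\frac{1}{\alpha!}\langle\xi\rangle^{s-\mu}\big(\Delta^{\alpha}_{(x)}\langle\xi\rangle^{s-\mu}\big)$-type terms paired with $\partial$-derivatives of $a$ and of $\langle\xi\rangle^{-s}$; all such terms obey $|\partial^{\alpha}_{x}\sigma_{T}(x,\xi)|\le C_{\alpha}$ uniformly. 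In particular $\sigma_T$ satisfies \eqref{EQ:torus-L2-pse} for every $\alpha$, so Theorem \ref{L2-Bs} gives the $L^{2}(\Omega)\to L^{2}(\Omega)$ boundedness of $T$, completing the argument.

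The step I expect to require the most care is the justification that the relevant composition formula applies here, since $\mathrm{Op_L}(a)$ is only assumed to map $C^\infty_{\rm L}(\overline\Omega)\to\mathcal D'_{\rm L}(\Omega)$ and the composition Theorem \ref{Composition} is stated for operators mapping $C^\infty_{\rm L}(\overline\Omega)\to C^\infty_{\rm L}(\overline\Omega)$ and presupposes $\mathrm{L}$-admissibility; one has to check that the flat multipliers $\Lambda^{\pm t}$ do preserve $C^\infty_{\rm L}(\overline\Omega)$ (they do, being $\mathrm{L}$-Fourier multipliers by $\langle\xi\rangle^{\pm t}\in\mathcal S'(\ind)$ acting boundedly on $\mathcal S(\ind)$, hence on $C^\infty_{\rm L}(\overline\Omega)$ via Proposition \ref{LEM: FTinS}), that the condition \eqref{Hs-cond} already forces $\mathrm{Op_L}(a)$ into the admissible framework up to the flat reduction, and that the asymptotic remainder in Theorem \ref{Composition}, which lives in $\mathrm{Op_L}S^{-\infty}$, is harmless for $L^2$-boundedness by Theorem \ref{L2-Bs}. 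Once these bookkeeping points are in place the argument is entirely routine; this is precisely why the paper states the proof is ``similar to \cite{Ruzhansky-Tokmagambetov:IMRN}'' and omits it.
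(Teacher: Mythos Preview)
Your approach---sandwiching $\mathrm{Op_L}(a)$ between the Fourier multipliers $\Lambda^{\pm t}$ and invoking the composition formula to reduce to Theorem~\ref{L2-Bs}---is exactly the route the paper indicates (``from a suitable adaptation of the composition Theorem~\ref{Composition}\ldots''), and the bookkeeping you outline (the $\Lambda^{t}$ are isometric on the $\mathcal H^{s}_{\rm L}$-scale by Definition~\ref{SobSp}, $\partial^{\alpha}_{x}$ and $D^{(\alpha)}_{x}$ are interchangeable by Proposition~\ref{TaylorExp}) is correct.

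One slip to fix: the assertion that $a\in S^{\mu}_{1,0}(\overline{\Omega}\times\ind)$ because ``the $\Delta^{\alpha}_{q}$-estimates for $|\alpha|\ge 1$ are gained at no cost'' is not right---hypothesis~\eqref{Hs-cond} controls only $x$-derivatives and says nothing about $\Delta^{\alpha}_{q}a$. This does not damage the argument, however: composition with $\Lambda^{-s}$ on the right is \emph{exact}, since $\widehat{\Lambda^{-s}f}(\xi)=\langle\xi\rangle^{-s}\widehat f(\xi)$ gives $\mathrm{Op_L}(a)\circ\Lambda^{-s}=\mathrm{Op_L}(a(x,\xi)\langle\xi\rangle^{-s})$ with no remainder; and for the left composition $\Lambda^{s-\mu}\circ\mathrm{Op_L}(a\langle\xi\rangle^{-s})$, Theorem~\ref{Composition} requires $\Delta^{\alpha}$-bounds only on the left factor $\langle\xi\rangle^{s-\mu}$ and $D^{(\beta)}$-bounds only on the right factor $a(x,\xi)\langle\xi\rangle^{-s}$---the latter being precisely \eqref{Hs-cond}. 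So you never need $\Delta^{\alpha}_{q}$-control on $a$ itself.
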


By using Theorem \ref{El-ty} and Corollary \ref{Hs-Bs}, we get

\begin{theorem}\label{L2-Bs and El-ty}
Let $A$ be an {\rm L}-elliptic pseudo-differential operator with {\rm
L}-symbol $\sigma_A\in S^{\mu}(\overline{\Omega}\times\ind)$,
$\mu\in\mathbb R$, and let $Au=f$ in $\Omega$, $u\in \mathcal
H^{\infty}_{{\rm L}}(\Omega)$. Then we have the estimate
$$
\|u\|_{\mathcal H^{s+\mu}_{{\rm L}}(\Omega)}\leq C_{sN}
(\|f\|_{\mathcal H^{s}_{{\rm L}}(\Omega)}+ \|u\|_{\mathcal
H^{-N}_{{\rm L}}(\Omega)}).
$$
for any $s, N\in\mathbb R$.
\end{theorem}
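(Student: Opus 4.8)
The plan is to derive this a priori (elliptic regularity) estimate from the existence of a parametrix in Theorem~\ref{El-ty} together with the Sobolev-space mapping property of pseudo-differential operators in Corollary~\ref{Hs-Bs}. First I would invoke Theorem~\ref{El-ty} to obtain an operator $B\in{\rm Op_L}(S^{-\mu}(\overline{\Omega}\times\ind))$ which is a parametrix for $A$, i.e. such that $BA=I+R$ with $R\in{\rm Op_L}(S^{-\infty}(\overline{\Omega}\times\ind))$. Applying $B$ to the equation $Au=f$ gives $u = BAu - Ru = Bf - Ru$, so that
$$
\|u\|_{\mathcal H^{s+\mu}_{{\rm L}}(\Omega)}\leq \|Bf\|_{\mathcal H^{s+\mu}_{{\rm L}}(\Omega)}+\|Ru\|_{\mathcal H^{s+\mu}_{{\rm L}}(\Omega)}.
$$

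Next I would estimate the two terms separately. For the first, since $\sigma_B\in S^{-\mu}(\overline{\Omega}\times\ind)$, Corollary~\ref{Hs-Bs} (with the roles of $s$ and $\mu$ adapted, noting that $\partial^\alpha_x$ and $D^{(\alpha)}_x$ are interchangeable as linear combinations with smooth coefficients by Proposition~\ref{TaylorExp}) yields $\|Bf\|_{\mathcal H^{s+\mu}_{{\rm L}}(\Omega)}\leq C\|f\|_{\mathcal H^{s}_{{\rm L}}(\Omega)}$. For the second, since $R\in{\rm Op_L}(S^{-\infty})$, for any $N\in\mathbb R$ the symbol of $R$ lies in $S^{-(s+\mu+N)}$ (in fact in every such class), so $R:\mathcal H^{-N}_{{\rm L}}(\Omega)\to\mathcal H^{s+\mu}_{{\rm L}}(\Omega)$ is bounded, again by Corollary~\ref{Hs-Bs}; hence $\|Ru\|_{\mathcal H^{s+\mu}_{{\rm L}}(\Omega)}\leq C_N\|u\|_{\mathcal H^{-N}_{{\rm L}}(\Omega)}$. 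Combining the two bounds gives the claimed inequality with a constant depending on $s$ and $N$.

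The routine points to check are that $u\in\mathcal H^\infty_{{\rm L}}(\Omega)$ makes all the expressions above meaningful (so that $BAu$ may legitimately be regrouped as $u+Ru$ and the smoothing operator $R$ may be applied), and that the composition $BA$ indeed has the stated form, which is exactly the content of the parametrix construction in Theorem~\ref{El-ty} combined with the asymptotic-summation theorem for symbols. One should also recall from Proposition~\ref{SobHilSpace} that $\mathcal H^s_{{\rm L}}(\Omega)$ and $\mathcal H^s_{{\rm L}^*}(\Omega)$ (and the classical $\mathcal H^s(\Omega)$) are isometrically isomorphic, so there is no subtlety in which Sobolev scale is used.

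The main obstacle I expect is purely bookkeeping rather than conceptual: one must make sure that the mapping property in Corollary~\ref{Hs-Bs}, which is stated for symbols satisfying $|\partial^\alpha_x a(x,\xi)|\leq C\langle\xi\rangle^\mu$ for all $\alpha$, genuinely applies to $\sigma_B$ and to $\sigma_R$ — that is, that being in $S^{-\mu}_{1,0}(\overline{\Omega}\times\ind)$ (respectively $S^{-\infty}$) implies the hypothesis of that corollary after converting the difference/derivative operators $D^{(\alpha)}_x$ back to ordinary derivatives $\partial^\alpha_x$ via Proposition~\ref{TaylorExp}. Once that translation is in place, the estimate follows immediately, and since this is entirely parallel to the corresponding argument in \cite{Ruzhansky-Tokmagambetov:IMRN}, the write-up can be kept short.
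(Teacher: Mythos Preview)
Your proposal is correct and follows exactly the approach indicated in the paper, which simply states ``By using Theorem~\ref{El-ty} and Corollary~\ref{Hs-Bs}, we get'' before the theorem without spelling out the details. The parametrix argument you outline --- write $u=Bf-Ru$ and bound each term via the Sobolev mapping property --- is precisely what is intended, and your remarks on translating between $D^{(\alpha)}_x$ and $\partial^\alpha_x$ via Proposition~\ref{TaylorExp} correctly identify the only routine verification needed.
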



\begin{thebibliography}{RTW14}

\bibitem[Bar51]{bari}
N.~K. Bari.
\newblock Biorthogonal systems and bases in {H}ilbert space.
\newblock {\em Moskov. Gos. Univ. U\v cenye Zapiski Matematika},
  148(4):69--107, 1951.

\bibitem[BL76]{Bergh-Lofstrom:BOOK-Interpolation-spaces}
J.~Bergh and J.~L{{\"o}}fstr{{\"o}}m.
\newblock {\em Interpolation spaces. {A}n introduction}.
\newblock Springer-Verlag, Berlin, 1976.
\newblock Grundlehren der Mathematischen Wissenschaften, No. 223.

\bibitem[DR14a]{Delgado-Ruzhansky:invariant}
J.~Delgado and M.~Ruzhansky.
\newblock Fourier multipliers, symbols and nuclearity on compact manifolds.
\newblock {\em arXiv:1404.6479}, to appear in {\em J. Anal. Math.}

\bibitem[DRT15]{Delgado-Ruzhansky-Togmagambetov:nuclear}
J.~Delgado, M.~Ruzhansky, and N.~Tokmagambetov.
\newblock Schatten classes, nuclearity and nonharmonic analysis on compact manifolds with boundary.
\newblock {\em arXiv:1505.02261}, to appear in {\em J. Math. Pures Appl.}

\bibitem[GR15]{Garetto-Ruzhansky:sum-JDE}
C.~Garetto and M.~Ruzhansky.
\newblock Wave equation for sums of squares on compact {L}ie groups.
\newblock {\em J. Differential Equations}, 258(12):4324--4347, 2015.

\bibitem[KT14]{Kanguzhin_Tokmagambetov}
B.~Kanguzhin and N.~Tokmagambetov.
\newblock The {F}ourier transform and convolutions generated by a differential
  operator with boundary condition on a segment.
\newblock In {\em Fourier Analysis: Trends in Mathematics}, pages 235--251.
  Birkh\"auser Basel AG, Basel, 2014.

\bibitem[KTT15]{Kanguzhin_Tokmagambetov_Tulenov}
B.~Kanguzhin, N.~Tokmagambetov, and K.~Tulenov.
\newblock Pseudo-differential operators generated by a non-local boundary value
  problem.
\newblock {\em Complex Var. Elliptic Equ.}, 60(1):107--117, 2015.

\bibitem[RT16]{Ruzhansky-Tokmagambetov:IMRN}
M.~Ruzhansky and N.~Tokmagambetov.
\newblock Nonharmonic analysis of boundary value problems.
\newblock {\em Int. Math. Res. Not. IMRN}, 2016 (12), 3548--3615, 2016.

\bibitem[RT07]{RT07}
M.~Ruzhansky and V.~Turunen.
\newblock On the {F}ourier analysis of operators on the torus.
\newblock In {\em Modern trends in pseudo-differential operators}, volume 172
  of {\em Oper. Theory Adv. Appl.}, pages 87--105. Birkh{\"a}user, Basel, 2007.

\bibitem[RT09]{Ruzhansjy-Turunen:NFA}
M.~Ruzhansky and V.~Turunen.
\newblock On the toroidal quantization of periodic pseudo-differential
  operators.
\newblock {\em Numer. Funct. Anal. Optim.}, 30(9-10):1098--1124, 2009.

\bibitem[RT10a]{RT}
M.~Ruzhansky and V.~Turunen.
\newblock {\em Pseudo-differential operators and symmetries. Background
  analysis and advanced topics}, volume~2 of {\em Pseudo-Differential
  Operators. Theory and Applications}.
\newblock Birkh{\"a}user Verlag, Basel, 2010.

\bibitem[RT10b]{Ruzhansky-Turunen-JFAA-torus}
M.~Ruzhansky and V.~Turunen.
\newblock Quantization of pseudo-differential operators on the torus.
\newblock {\em J. Fourier Anal. Appl.}, 16(6):943--982, 2010.


\bibitem[RTW14]{Ruzhansky-Turunen-Wirth:JFAA}
M.~Ruzhansky, V.~Turunen, and J.~Wirth.
\newblock H{\"o}rmander class of pseudo-differential operators on compact {L}ie
  groups and global hypoellipticity.
\newblock {\em J. Fourier Anal. Appl.}, 20(3):476--499, 2014.

\bibitem[SZ02]{Sogge-Zelditch:max-ef-growth-Duke}
C.~D. Sogge and S.~Zelditch.
\newblock Riemannian manifolds with maximal eigenfunction growth.
\newblock {\em Duke Math. J.}, 114(3):387--437, 2002.



\end{thebibliography}

\end{document}